\newcounter{theoremcounter}
\newcounter{dummycounter}
\newcounter{emptycounter}
 \newtheorem{theorem}{Theorem}[section]
 \newtheorem{lemma}{Lemma}[section]
 \newtheorem{corollary}{Corollary}[section]
 \newtheorem{proposition}{Proposition}[section]
\newcounter{eqncounter}
\numberwithin{equation}{eqncounter}
\def\IR{\mathbb R}
\def\IC{\mathbb C}
\def\IZ{\mathbb Z}
\def\IQ{\mathbb Q}
\def\IU{\mathbb U}
\def\IS{\mathbb S}
\def\Pc{\mathcal{P}}
\def\IL{\mathcal{L}}
\def\H{\mathcal{H}}
\def\J{\mathcal{J}}
\def\I{\mathcal{I}}
\def\R{\mathcal{R}}
 \newcommand{\Norm}{N_{K/k}}
 \newcommand{\KT}{K^{\times}/\Tor(K^{\times})}
\def\p{\mathfrak{p}}
\def\q{\mathfrak{q}}
\def\B{\mathfrak{B}}
\def\A{\mathfrak{A}}
\def\C{\mathfrak{C}}
\def\D{\mathfrak{D}}
\def\E{\mathfrak{E}}
\def\F{\mathfrak{F}}
\def\Zstar{Z^*}
\def\Z{Z}
\def\Sa{\mathfrak{S}}
 \newcommand{\G}{\mathcal{G}}
\def\La{\Lambda}
\renewcommand{\vec}[1]{\mbox{\boldmath$#1$}}
\def\NKk{\mathcal{S}_{K/k}}
\def\id{\mathop{\rm id}\nolimits}
\def\Tor{\mathop{\rm Tor}\nolimits}
\def\Cl{\mathop{\rm Cl}\nolimits}
\def\Gal{\mathop{\rm Gal}\nolimits}
\def\IM{\mathop{\rm Im}\nolimits}
\def\ord{\mathop{\rm ord}\nolimits}
\def\Vol{\textup{Vol}}
\def\Oseen{{\mathcal{O}}}
\def\Sc{\mathcal{S}}
\def\bx{\vec{x}}
\def\bz{\vec{z}}
\def\vx{{\vec{x}}}
\def\vy{{\vec{y}}}
\def\bsigma{{\vec{\sigma}}}
\def\dis{D_{K/k}}
\def\conSK{A_K}
\def\Gal{\mathop{{\rm Gal}}\nolimits}
\title{Effective equidistribution of norm one elements in CM-fields} 
\author{Shabnam Akhtari, Jeffrey~D.~Vaaler, and Martin Widmer}
\subjclass[2020]{11R06, 11R21, 11G50, 11D45}
\address{Department of Mathematics, Pennsylvania State University, University Park, PA 16802 USA}
\email{akhtari@psu.edu}
\address{Department of Mathematics, University of Texas, Austin, TX 78712 USA}
\email{vaaler@math.utexas.edu}
\address{Graz University of Technology, Institute of Analysis and Number Theory, Steyrergasse 30/II, 8010 Graz, Austria}
\email{martin.widmer@tugraz.at}
\begin{document}

\date{\today}

\subjclass[2020]{11R06, 11R21, 11G50, 11D45}

\keywords{CM-fields, equidistribution, norm one, counting, Weil height, Hilbert 90}

\maketitle

\begin{abstract}
For a number field $K$ let $\Sc_K$ be the maximal  subgroup of the multiplicative group $K^\times$
that embeds into the unit circle under each embedding of $K$ into the complex numbers. 
The group $\Sc_K$ can be seen as an archimedean counterpart to the group of units  
$\Oseen_K^\times$ of the ring of integers $\Oseen_K$. If $K=\IQ(\Sc_K)$ is a CM-field then 
$\Sc_K/\Tor(K^\times)$ is a free abelian group of infinite rank.
If $K=\IQ(\Sc_K)$ is not a CM-field then $\Sc_K=\{\pm 1\}$. 
In the former case $\Sc_K$ is the kernel of the relative norm map from $K^\times$
to the multiplicative subgroup $k^\times$ of the maximal totally real subfield $k$ of $K$.

We prove an effective equidistribution result for the elements of $\Sc_K$ embedded into the
complex unit circle and enumerated by the Weil height. Our result also includes
a specific rate of convergence.

For imaginary quadratic fields an ineffective version of the equidistribution result  has been proven by Petersen and Sinclair.
\end{abstract}

\section{Introduction}
Let $K$ be a  number field. We choose a representative $|\cdot|_v$ for each place $v$ of $K$ and write $v|\infty$ if $v$ is archimedean and $v\nmid \infty$ if $v$ is non-archimedean. The group of units of the ring of integers $\Oseen_K$ is given by
$$\Oseen_K^\times=\{\alpha\in K^{\times}; |\alpha|_v=1 \; \text{ for all $v$ such that } v\nmid \infty \}.$$
In this article we study its ``archimedean counterpart'' defined by
\begin{alignat*}1
\Sc_K=\{\alpha\in K^{\times}; |\alpha|_v=1 \; \text{ for all $v$ such that } v|\infty \}.
\end{alignat*}
It is clear that $\Sc_K$ is also a subgroup of the multiplicative group $K^\times$ and that
\begin{alignat}1\label{eq:TorSK}
\{\pm1\}\subseteq \Tor(K^\times)=\Sc_K\cap \Oseen_K^\times,
\end{alignat}
where $\Tor(K^\times)$ denotes the torsion subgroup of $K^\times$.
Our main result is concerned with the distribution of elements in $\Sc_K$ when enumerated by the Weil height. 

But first let us clarify the basic structure of 
the group $\Sc_K$ and its connection to CM-fields. 
Recall that a number field $K$ is a CM-field if it is totally complex and contains a totally real
subfield $k$ of index $2$.
\begin{proposition}\label{prop:SKbasic}
Let $K$ be a number field, and let
$\IQ(\Sc_K)$ be the intersection of all subfields of $K$ that
contain $\Sc_K$.
 If $\IQ(\Sc_K)$ is not a CM-field then $\Sc_K = \{\pm 1\}$.
If $\IQ(\Sc_K)$ is  a CM-field then $\Sc_K/\Tor(K^\times)$ is a free abelian group of countably infinite rank.
\end{proposition}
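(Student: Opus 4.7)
My plan is to handle the two parts of the proposition separately. For part (a), I argue by contrapositive: assuming some $\alpha\in\Sc_K$ with $\alpha\neq\pm 1$, I show $\IQ(\Sc_K)$ is CM. Since every embedding of $\IQ(\alpha)$ extends to one of $K$, the hypothesis $|\sigma(\alpha)|=1$ holds for all embeddings $\sigma$ of $\IQ(\alpha)$, ruling out real embeddings (else $\alpha=\pm 1$) and making $\IQ(\alpha)$ totally complex. Setting $\beta=\alpha+\alpha^{-1}$, the identity $\overline{\sigma(\alpha)}=\sigma(\alpha)^{-1}$ forces $\sigma(\beta)=2\re\sigma(\alpha)\in\IR$, so $\IQ(\beta)$ is totally real. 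Since $\IQ(\beta)\subsetneq\IQ(\alpha)$ (totally real inside totally complex) and $\alpha$ satisfies $X^2-\beta X+1=0$ over $\IQ(\beta)$, the degree is two and $\IQ(\alpha)$ is CM. Since $\IQ(\Sc_K)\subseteq K$ is a finite extension of $\IQ$, it is the compositum of finitely many such $\IQ(\alpha_i)$, and a direct computation (using that for CM $F_i=k_i(\delta_i)$ with $\delta_i^2$ totally negative, $\delta_1\delta_2$ is totally real) shows that the compositum of two CM-fields is CM, completing part (a) by induction.

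For part (b), set $L=\IQ(\Sc_K)$ with maximal totally real subfield $k_L$. I first reduce to $K=L$: for $\alpha\in L$ and archimedean $w\mid v$ with $w$ of $K$ and $v$ of $L$, one has $|\alpha|_w=|\alpha|_v$, so $\Sc_K=\Sc_L$; moreover $\Tor(K^\times)=\Tor(L^\times)$ because all roots of unity lie in $\Sc_K\subseteq L$. Assuming $K=L$ with $\Gal(K/k)=\{1,c\}$, the computation $\sigma(N_{K/k}(\alpha))=\sigma(\alpha)\overline{\sigma(\alpha)}=|\sigma(\alpha)|^2$ for each embedding $\sigma:K\to\IC$ shows $\Sc_K=\ker(N_{K/k})$, and Hilbert's Theorem 90 for the cyclic extension $K/k$ then yields $\Sc_K=\{\beta/c(\beta):\beta\in K^\times\}$. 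Freeness follows from the fact that $K^\times/\Tor(K^\times)$ is itself free abelian, via the short exact sequence $1\to\Oseen_K^\times/\Tor(K^\times)\to K^\times/\Tor(K^\times)\to K^\times/\Oseen_K^\times\to 1$ whose left term is $\IZ^{r_1+r_2-1}$ by Dirichlet and whose right term embeds into the free abelian group of fractional ideals; since every subgroup of a free abelian group is free abelian, $\Sc_K/\Tor(K^\times)$ inherits this property.

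For countably infinite rank, Chebotarev produces infinitely many primes $\p$ of $k$ splitting in $K$, giving distinct primes $\P_1,\P_2,\ldots$ of $K$ with $\P_i\neq c(\P_i)$. For any $n$, weak approximation yields $\pi_i\in K^\times$ ($1\le i\le n$) with $v_{\P_i}(\pi_i)=1$, $v_{\P_j}(\pi_i)=0$ for $j\neq i$, and $v_{c(\P_j)}(\pi_i)=0$ for all $j$; then $\pi_i/c(\pi_i)\in\Sc_K$ satisfies $v_{\P_j}(\pi_i/c(\pi_i))=\delta_{ij}$, using $v_\P\circ c=v_{c(\P)}$. Any relation $\prod_i(\pi_i/c(\pi_i))^{n_i}\in\Tor(K^\times)$ evaluated under $v_{\P_j}$, which vanishes on torsion, forces all $n_j=0$, so $\Sc_K/\Tor(K^\times)$ admits arbitrarily large independent subsets and has infinite rank; countability is inherited from $K^\times$. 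The main obstacle I anticipate is the CM characterization of $\IQ(\alpha)$ together with the compositum bookkeeping in part (a); once those are in place, part (b) is a fairly standard application of Hilbert 90, Chebotarev density, and the structure of $K^\times/\Tor(K^\times)$.
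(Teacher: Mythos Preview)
Your argument is correct, and both halves take a more self-contained route than the paper.

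For part (a) the paper argues exactly as you do---contrapositive, show each $\IQ(\alpha)$ is CM, then pass to the compositum---but it outsources the two key steps: it cites the Blanksby--Loxton characterisation of CM-fields for ``$\IQ(\alpha)$ is CM'' and Shimura's lemma for ``compositum of CM is CM''. Your direct arguments via $\beta=\alpha+\alpha^{-1}$ and via $\delta_1\delta_2$ being totally real recover precisely what is needed from those references, so the structure is the same but your version is elementary.

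Part (b) is where the approaches genuinely diverge. The paper obtains freeness by observing that the logarithmic Weil height is a \emph{discrete norm} on $K^\times/\Tor(K^\times)$ (positivity, homogeneity, triangle inequality, and a positive lower bound off the identity), and then invokes the theorem of Lawrence/Stepr\={a}ns/Zorzitto that any abelian group carrying a discrete norm is free. For infinite rank it cites Brandis's theorem that $K^\times/k^\times$ is never finitely generated for a proper extension $K/k$. Your route avoids both citations: freeness comes from the splitting of $1\to\Oseen_K^\times/\Tor\to K^\times/\Tor\to K^\times/\Oseen_K^\times\to 1$ (the quotient is free, being a subgroup of the fractional ideal group), and infinite rank from an explicit construction of independent elements $\pi_i/c(\pi_i)$ detected by valuations at split primes. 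Your approach is more hands-on and needs less external input; the paper's is shorter once the cited results are granted, and the discrete-norm viewpoint has the advantage of applying uniformly to any subgroup of $K^\times/\Tor(K^\times)$ without further work.
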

Obviously we have $\Sc_{\IQ(\Sc_K)}=\Sc_K$ for each number field $K$.
For the purposes of studying the group $\Sc_K$ we can and will therefore assume that $K$ is a CM-field, and we write $k$ for its maximal totally real subfield. 

The norm map $\Norm:K^\times\to k^\times$ is a homomorphism of groups
and it is closely related to the group $\Sc_K$ via the following proposition.
The proof follows easily from a characterisation of CM-fields due to Shimura,
and stated here as Proposition \ref{prop: normkern2}.

\begin{proposition}\label{prop:SKnormkern}
Let $K$ be a CM-field, and let $k$ be its maximal totally real subfield. Then $\Sc_K$
is the kernel of the norm map $\Norm:K^\times\to k^\times$.
\end{proposition}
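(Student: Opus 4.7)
The plan is to exploit Shimura's characterisation of CM-fields (Proposition \ref{prop: normkern2}), which gives that if $K$ is CM with maximal totally real subfield $k$, then the nontrivial element $\rho \in \Gal(K/k)$ satisfies $\sigma \circ \rho = c \circ \sigma$ for every embedding $\sigma : K \to \IC$, where $c$ denotes complex conjugation on $\IC$. In other words, $\rho$ is an intrinsic ``complex conjugation'' on $K$ that is compatible with every complex embedding simultaneously.

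Armed with this, I would compute the image of the relative norm under an arbitrary embedding. Since $\Gal(K/k) = \{1, \rho\}$, we have $\Norm(\alpha) = \alpha \cdot \rho(\alpha)$, and hence for any embedding $\sigma : K \to \IC$,
\begin{equation*}
\sigma\bigl(\Norm(\alpha)\bigr) \;=\; \sigma(\alpha)\,\sigma\bigl(\rho(\alpha)\bigr) \;=\; \sigma(\alpha)\,\overline{\sigma(\alpha)} \;=\; |\sigma(\alpha)|^{2}.
\end{equation*}
Note that this value is real and positive, as it must be since $\Norm(\alpha) \in k^\times$ and every embedding of $k$ is real.

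To finish, I would translate the defining condition of $\Sc_K$. Because $K$ is totally complex, every archimedean place $v$ corresponds to a conjugate pair of embeddings $\{\sigma, \bar\sigma\}$, and the condition $|\alpha|_v = 1$ is equivalent to $|\sigma(\alpha)| = 1$. Therefore $\alpha \in \Sc_K$ if and only if $|\sigma(\alpha)|^{2} = 1$ for all embeddings $\sigma$, which by the displayed identity is equivalent to $\tau(\Norm(\alpha)) = 1$ for every real embedding $\tau$ of $k$ (each such $\tau$ arises as $\sigma|_k$ for some $\sigma$). Since the element $\Norm(\alpha) - 1 \in k$ has all embeddings equal to $0$ under this hypothesis, it must itself be $0$, giving $\Norm(\alpha) = 1$; conversely $\Norm(\alpha) = 1$ forces every $|\sigma(\alpha)|^{2}$ to equal $1$. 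This yields $\Sc_K = \ker \Norm$.

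The argument is essentially a one-line calculation once Shimura's characterisation is in hand, so there is no genuine obstacle; the only thing to be careful about is recording that $\Norm(\alpha)$, being totally positive, is determined by its images being $1$ rather than $\pm 1$, and that each real embedding of $k$ extends to an embedding of $K$ into $\IC$ so no archimedean places are missed.
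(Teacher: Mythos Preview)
Your proof is correct and follows essentially the same route as the paper: both use Shimura's characterisation to obtain $\sigma(\Norm(\alpha)) = |\sigma(\alpha)|^2$ for every embedding $\sigma$, and then read off the equivalence $\alpha \in \Sc_K \Leftrightarrow \Norm(\alpha)=1$. One small correction: the Shimura characterisation you invoke is Lemma~\ref{lem:Shimura}, not Proposition~\ref{prop: normkern2} (the latter is the very statement being proved); also be aware that the paper reserves $\rho$ for complex conjugation and $\tau$ for the Galois automorphism, so your notational choice clashes with the surrounding text.
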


We now describe our main result. In short, it provides the asymptotics, and a power saving error term, for the number of elements in $\Sc_K$ with bounded height whose embeddings lie in given arcs
of the unit circle in $\IC$. 

Let $2N$ be the degree of the CM-field  $K$, and recall that $k$ is its maximal totally real subfield. 
For $1\leq n\leq N$ let $\sigma_n,\sigma_{n+N}$ be the $N$ pairs of complex conjugate embeddings of $K$ into $\IC$, so that 
$$\Sc_K=\{\alpha\in K; |\sigma_n(\alpha)|=1 \; \text{ for } 1\leq n\leq N\}.$$ 
 For a non-zero complex number $x$ we write $\arg(x)$ for the unique argument of $x$ in $[0,2\pi)$, so that $x=|x|e^{i\arg(x)}$.
For a product of intervals $\I=\I_1\times \cdots \times \I_N$ with each interval $\I_j\subseteq [0,2\pi)$ we define
\begin{alignat*}1
\Sc_K(\I)=\{\alpha\in \Sc_K; (\arg(\sigma_n(\alpha)))_n\in \I\},
\end{alignat*}
and we write $|\I|$ for the product of the lengths of the intervals $\I_1,\ldots, \I_N$.

For each place $v$ of $K$ we choose the unique representative $|\cdot |_v$ that either
extends the usual archimedean absolute value or one of the usual $p$-adic absolute values on $\IQ$,
and we write $[K_v:\IQ_v]$ for the local degree at $v$.
Let
\begin{alignat*}1
H(\alpha)=\prod_{v}\max\left\{1,|\alpha|_v\right\}^{\frac{[K_v:\IQ_v]}{[K:\IQ]}}
\end{alignat*}
denote the absolute (multiplicative) Weil height on $K$. We refer the reader to \cite[Section 1.5]{BG} for more details
on the Weil height.
For $\H\geq 1$ 
we define
\begin{alignat}1\label{set: SKIH}
\Sc_K(\I,\H)=\{\alpha\in \Sc_K(\I);  H(\alpha)\leq \H\}.
\end{alignat}
We set
\begin{alignat}1\label{def: SK}
\conSK=\left(\prod_{P|\dis}\frac{2N_{k/\IQ}(P)}{N_{k/\IQ}(P)+1}\right)\frac{1}{\sqrt{N_{k/\IQ}(\dis)}}\frac{h_kR_k}{\omega_k\zeta_k(2) |\Delta_k|},
\end{alignat}
where the product\footnote{As usual, the empty product is interpreted as $1$.} runs over all prime ideals $P$ of $\Oseen_k$ dividing the relative discriminant $\dis$  of $K/k$,  and
$N_{k/\IQ}(P)=[\Oseen_k:P]$ denotes the (absolute) norm of the ideal $P$,
$h_k$ denotes the class number, $R_k$ is the regulator,  $\omega_k=2$ is the number of roots of unity,  and $\Delta_k$ is the discriminant of $k$.

We are now in position to state the main result.
\begin{theorem}\label{thm: main}
Let $K$ be a CM-field of degree $2N$.
There exists  $C_K>0$, depending only on $K$, such that for every  $\H\geq 2$ we have
\begin{alignat}1\label{set: SKIH}
\left|\#\Sc_K(\I,\H)-\conSK|\I|\H^{2N}\right|\leq C_{K}\H^{2N-1}\IL,
\end{alignat}
where $\IL$ is defined to be $\log \H$ if $N=1$ and $1$ if $N\geq 2$.
\end{theorem}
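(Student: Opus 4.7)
Proposition~\ref{prop:SKnormkern} identifies $\Sc_K$ with $\ker(\Norm:K^\times\to k^\times)$, and Hilbert's Theorem~90 then supplies a surjection $\Phi:K^\times\twoheadrightarrow\Sc_K$, $\beta\mapsto\beta/\bar\beta$, with $\ker\Phi=k^\times$. Choosing any $k$-basis $\{1,\omega\}$ of $K$ produces the identifications
\[
\Sc_K \;\cong\; K^\times/k^\times \;\cong\; \IP^1(k),\qquad [\beta=x+y\omega]\longleftrightarrow[x:y].
\]
The counting problem thus becomes: count $[x:y]\in\IP^1(k)$ subject to (i) a \emph{twisted-height} bound $H(\beta/\bar\beta)\le\H$, and (ii) the angular condition $2\arg\sigma_n(x+y\omega)\in\I_n\Mod{2\pi}$ for $n=1,\ldots,N$ (which is well-defined on $\IP^1(k)$ because $\sigma_n(\lambda)\in\IR$ for $\lambda\in k^\times$). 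Since $|\beta/\bar\beta|_v=1$ at every archimedean $v$, the twisted height has no archimedean contribution; since $\Norm(\beta/\bar\beta)=1$, it is supported only at primes $P$ of $k$ that split in $K$, while at ramified primes a $\IZ/2$-constraint on $v_\p(\beta)\Mod{2}$ survives the quotient by $k^\times$.

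\textbf{Schanuel-type count with effective error.}
The plan is to adapt the classical Schanuel count for $\IP^1(k)$ to this twisted height and angular restriction. After dissecting by the ideal class group $\Cl_k$ and using the logarithmic embedding of $\Oseen_k^\times/\Tor(k^\times)$ (a lattice of rank $N-1$) at the $N$ real places of $k$, the count in each class reduces to counting $\Oseen_k^2$-lattice points inside a homogeneously expanding region in $(k\otimes_{\IQ}\IR)^2\cong\IR^{2N}$. The region is cut out jointly by the twisted-height ball and by the $N$ independent angular conditions on $\arg\sigma_n(x+y\omega)$. A quantitative Lipschitz lattice-point counting lemma of Widmer type then yields, uniformly in $\I$ and over ideal classes, a main term equal to the Euclidean volume of the region and an error of order $\H^{2N-1}$. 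When $N=1$, one has $k=\IQ$ with trivial unit group, and the M\"obius sieve for coprime pairs $(x,y)\in\IZ^2$ contributes the additional $\log\H$ factor.

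\textbf{Identification of $\conSK$; main obstacle.}
Assembling volumes and local densities reproduces $\conSK\,|\I|\H^{2N}$. The Schanuel-type constant for $\IP^1(k)$ contributes the factor $\frac{h_kR_k}{\omega_k\zeta_k(2)|\Delta_k|}$, with $\zeta_k(2)^{-1}$ arising from the coprimality condition on $(x,y)$; the conductor-discriminant relation $|\Delta_K|=N_{k/\IQ}(\dis)|\Delta_k|^2$ yields the factor $\frac{1}{\sqrt{N_{k/\IQ}(\dis)}}$ when the standard height on $\IP^1(k)$ is replaced by $H(\beta/\bar\beta)$; and the local analysis at ramified primes assembles into $\prod_{P|\dis}\frac{2N_{k/\IQ}(P)}{N_{k/\IQ}(P)+1}$. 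The factor $|\I|$ comes from equidistribution of the arguments $2\arg\sigma_n(x+y\omega)$ on $[0,2\pi)^N$ as $[x:y]$ ranges over the counting region. The principal technical difficulty is the precise local analysis at ramified primes---matching the Euler factor $\frac{2N(P)}{N(P)+1}$ requires careful bookkeeping of how the parity of $v_\p(\beta)$ survives modulo $k^\times$---together with maintaining the error term uniformly in the intervals $\I_n$, which relies on the product structure of $\I$ to bound the boundary of the counting region Lipschitz-uniformly.
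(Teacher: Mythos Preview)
Your proposal is essentially the same approach as the paper's: parametrise $\Sc_K$ by $K^\times/k^\times$ via Hilbert~90, decompose by the class group of $k$, use a fundamental domain for $\Oseen_k^\times$ in the logarithmic embedding, apply a Widmer-type Lipschitz lattice-point count, and sieve (M\"obius over $k$-ideals together with a $2^s$-fold case split at the ramified primes) to extract the constant $\conSK$.

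The only notable difference is packaging. You phrase the parametrisation as $\IP^1(k)$ with coordinates $(x,y)\in\Oseen_k^2$ relative to a basis $\{1,\omega\}$, whereas the paper works directly with $\beta\in\Oseen_K$ and its ideal factorisation $\beta\Oseen_K=(A\Oseen_K)\D\B$ with $A\subseteq\Oseen_k$, $\D\mid\Pc$, and $\B$ free of $k$-ideal divisors and coprime to $\Pc$. The paper's decomposition makes the height formula $H(\psi(\beta))=(N_{K/\IQ}(\beta)/N_{K/\IQ}(C\Oseen_K\D))^{1/(2N)}$ and the ramified-prime bookkeeping (your ``principal difficulty'') fall out cleanly, and avoids the mild nuisance that $\Oseen_K$ need not be free over $\Oseen_k$, so $\Oseen_k^2$ is not literally the right lattice. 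Your $\IP^1(k)$ formulation would work after this adjustment, and the resulting count and error are the same.
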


For $\I=[0,2\pi)^N$ the main term of our result could possibly be derived from work of Batyrev and Tschinkel (\cite[Corollary 4.7]{BatyrevTschinkel1998} or even its precursor 
\cite{BatyrevTschinkel1995}). However, this would require some efforts. 
While very general, the methods from \cite{BatyrevTschinkel1998} do not provide effective results. To obtain an explicit power saving error term 
we develop another method, more in the spirit of \cite{MasserVaaler2}, which we explain at the end of this section.

Let $\Sc_K(\H)=\{\alpha\in \Sc_K; H(\alpha)\leq \H\}$.
Consider the discrepancy 
$$D_\H(\Sc_K)=\sup_{\I}\left|\frac{\#\Sc_K(\I,\H)}{\#\Sc_K(\H)}-\frac{|\I|}{(2\pi)^N}\right|\subseteq [0,1],$$
where the supremum is taken over all products of intervals 
$$\I=\I_1\times \cdots \times \I_N\subseteq [0,2\pi)^N.$$

Next consider the complete collection $\sigma_1,\ldots,\sigma_N$ of independent embeddings of $K$. 
Theorem \ref{thm: main} implies not only that the points of $\Sc_K$ are simultaneously and independently equidistributed on the unit circle under these embeddings (when enumerated by the Weil height)
but we also get a an explicit upper bound on the discrepancy (at least up to the constant $C'_K$).
\begin{corollary}\label{cor: discrep}
Let $K$ and $\IL$ be as in Theorem \ref{thm: main}. There exists $C'_K>0$ such that for $\H\geq 2$ we have
\begin{alignat}1\label{eq: discrp}
D_\H(\Sc_K)\leq C'_K \frac{\IL}{\H}.
\end{alignat}
\end{corollary}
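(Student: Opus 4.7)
The plan is to deduce Corollary \ref{cor: discrep} from Theorem \ref{thm: main} by applying the theorem twice, once to an arbitrary box $\I$ and once to the full box $[0,2\pi)^N$, and then taking the ratio. Setting $T=\conSK\H^{2N}$, Theorem \ref{thm: main} gives
\begin{equation*}
\#\Sc_K(\I,\H)=|\I|\,T+E_1, \qquad \#\Sc_K(\H)=(2\pi)^N T+E_2,
\end{equation*}
where $|E_1|,|E_2|\leq C_K\H^{2N-1}\IL$. Note that $\Sc_K(\H)=\Sc_K([0,2\pi)^N,\H)$, so the second equation is just the instance $\I=[0,2\pi)^N$ of the first.

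Next I would form the difference directly and bring to a common denominator; the main term $|\I|T\cdot(2\pi)^N$ cancels, yielding the clean identity
\begin{equation*}
\frac{\#\Sc_K(\I,\H)}{\#\Sc_K(\H)}-\frac{|\I|}{(2\pi)^N}=\frac{(2\pi)^N E_1-|\I|E_2}{(2\pi)^N\,\#\Sc_K(\H)}.
\end{equation*}
Using $|\I|\leq(2\pi)^N$ in the numerator and the trivial bounds on $E_1,E_2$, one obtains
\begin{equation*}
D_\H(\Sc_K)\leq\frac{2C_K\H^{2N-1}\IL}{\#\Sc_K(\H)}.
\end{equation*}

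The remaining issue, and the only step requiring any thought, is to convert this into the desired $\IL/\H$ bound by supplying a matching lower bound $\#\Sc_K(\H)\gg_K\H^{2N}$. Here I simply reuse the second asymptotic above: as soon as $\H$ exceeds a threshold $\H_0=\H_0(K)\geq 2$ for which $C_K\IL\leq\tfrac{1}{2}\conSK(2\pi)^N\H$, the error $E_2$ is absorbed and $\#\Sc_K(\H)\geq\tfrac{1}{2}\conSK(2\pi)^N\H^{2N}$. Substituting back yields $D_\H(\Sc_K)\leq\frac{4C_K}{\conSK(2\pi)^N}\cdot\frac{\IL}{\H}$ on this range.

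Finally, on the bounded range $2\leq\H\leq\H_0(K)$ I would use the trivial bound $D_\H(\Sc_K)\leq 1$, absorbed by enlarging $C'_K$ since $\IL/\H$ is bounded below there by a positive constant depending only on $K$. There is no genuine obstacle: the corollary is a routine ratio-of-asymptotics deduction in which all the real content is carried by Theorem \ref{thm: main}.
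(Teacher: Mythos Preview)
Your proof is correct and is precisely the routine ratio-of-asymptotics deduction the paper has in mind; the paper does not write out a separate proof of Corollary~\ref{cor: discrep} but simply presents it as an immediate consequence of Theorem~\ref{thm: main}. Your treatment of the small-$\H$ range via the trivial bound $D_\H(\Sc_K)\leq 1$ and the positivity of $\#\Sc_K(\H)$ (which is at least $2$ since $\{\pm 1\}\subseteq\Sc_K$) is the natural way to complete the argument.
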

A  precursor of Corollary \ref{cor: discrep} was proven
in 2011 by Petersen and Sinclair \cite[Theorem 2.1]{PetersenSinclair2011}
in the case of imaginary quadratic fields $K$. It is conceivable that  equidistribution can be deduced for arbitrary CM-fields by combining an observation of Peyre \cite[Proposition 5.0.1]{Peyre1995} with the aforementioned work of Batyrev and Tschinkel \cite{BatyrevTschinkel1998}.
However, neither Batyrev, Tschinkel and Peyre's nor Petersen and Sinclair's work yields an effective equidistribution result.
Petersen and Sinclair's work is analytic in nature and uses Weyl's equidistribution
criterion, the Wiener-Ikehara Tauberian Theorem, and properties of Hecke L-functions.
Our approach is quite different and the basic strategy is explained at the end of this section.

It is worthwhile to note that equidistribution 
fails if we consider many quadratic CM-fields simultaneously. 
Let  us consider the set  of all rational and all imaginary quadratic points on the unit circle  
$$\Sc_2=\bigcup_K \Sc_K,$$
where the union is taken over all imaginary quadratic fields $K$ (here we consider each 
$K$ as subfield of $\IC$),
and set
$$\Sc_2(\I,\H)=\{e^{i\theta}\in \Sc_2; \theta\in \I, H(e^{i\theta})\leq \H\}=\bigcup_K \Sc_K(\I,\H).$$
If $\I\subseteq (\pi,2\pi)$ then $\#\Sc_2(\I,\H)=\#\Sc_2(\I-\pi,\H)$ as $H(\alpha)=H(-\alpha)$.
Therefore, it suffices to consider the case $\I\subseteq [0,\pi]$.
 A point $e^{i\theta}\neq \pm 1$ on the unit circle is imaginary quadratic
if and only if $\cos(\theta)=-b/2a$, for coprime integers $a>0,b$. In this case the minimal polynomial is 
$$f(x)=ax^2-2a\cos(\theta)x+a=ax^2+bx+a\in \IZ[x]$$
and $H(e^{i\theta})=\sqrt{a}$ (see \cite[Propositions 1.6.5 and 1.6.6]{BG}). 
Writing $|\cos(\I)|$  for the length of the interval $\cos(\I)$, we get
$$\#\Sc_2(\I,\H)=O(1)+\sum_{a=1}^{\H^2}\sum_{b\in -2a\cos(\I) \atop (a,b)=1}1=\frac{|\cos(\I)|}{\zeta(2)}\H^4+O(\H^2\log \H).$$ 
In particular, $\Sc_2$ is not equidistributed  on the unit circle, when ordered by the Weil height $H(\cdot)$.\\

We conclude this section with a brief overview of the remaining sections.
In Section \ref{sec:basics} we recall some basic facts about CM-fields, and we deduce
the first part of Proposition \ref{prop:SKbasic} and  Proposition \ref{prop:SKnormkern}.

It follows from Proposition \ref{prop:SKnormkern} that Hilbert's Theorem 90 provides a 
surjective group homomorphism $\psi:K^\times \to \Sc_K$ with kernel $k^\times$.
In Section \ref{sec: grpstruc} we use this, in conjunction with the (logarithmic) Weil height, to deduce
that $\Sc_K/\Tor(K^\times)$ is a free abelian group of (countably) infinite rank, proving the
second part of Proposition \ref{prop:SKbasic}. 

Sections 4-7 are preparations
for the proof of Theorem \ref{thm: main}. Section \ref{sec: lattcount} provides the
counting principle Lemma \ref{lem: latticecounting} based on geometry of numbers to count lattice points.
In Section \ref{sec: prelim} we introduce the counting domain, and we prove that it satisfies the 
technical conditions needed to apply Lemma \ref{lem: latticecounting}. 

Section \ref{sec: funddom} can be seen as the core of the proof. The homomorphism $\psi$ induces an isomorphism $\hat{\psi}: K^\times/k^\times \to \Sc_K$. Therefore we need to construct a suitable fundamental domain of $K^\times$ under the
action of $k^\times$. ``Suitable'' means that the height bound cuts out a subset that is accessible 
to our counting techniques. All this is done in detail in Section \ref{sec: funddom} and (modulo minor 
modifications) this part is applicable to counting elements of bounded height in the kernel of the norm map for any quadratic extension $K/k$ of number fields. 

The next  step is to transform the counting problem to an ordinary lattice point counting problem, and this is carried out in Section \ref{sec: sieving}. We then have all in place to finalise the proof of Theorem \ref{thm: main}, which is done in Section \ref{sec: proofmainthm}.

In the final section we consider the  quotient group $K^\times/k^\times$.
We show that if a coset $k^\times$ in $K^\times$
intersects $\Sc_K$ then the
minimal height of all elements in that coset is the height of the elements that lie in $\Sc_K$
(clearly they all have equal height).
More generally we show that this holds true whenever
$K/k$ is a quadratic extension and $\Sc_K$ is the kernel of the norm map $\Norm: K^\times\to k^\times$.
Furthermore, we show that the cosets that intersect $\Sc_K$ are precisely the images of the 
squares in $\Sc_K$ under the inverse map of the isomorphism $\hat{\psi}$.

\section*{Acknowledgements}
Parts  of this work were done during a Summer Collaborators Program at the Institute for 
Advanced Study in  Princeton in 2024.
The authors gratefully acknowledge support from the IAS. 

M.W. would like  to thank Tim Browning, Christopher Frei, Daniel Loughran, Nick Rome, and Tim Santens for  helpful discussions and additional references.

\section{Basics on CM-fields}\label{sec:basics}
Let $\rho: \IC \to \IC$ be the complex conjugation. A basic observation made 
already by Shimura \cite[18.2. Lemma (i)]{shimura1998} is the following very useful characterisation of CM-fields.
\begin{lemma}[Shimura]\label{lem:Shimura}
A number field  $K$ is a CM-field if and only if there exists a non-trivial automorphism $\tau$ of $K$ such that $\sigma\circ\tau=\rho\circ \sigma$ for all homomorphisms $\sigma: K \to \IC$. 
\end{lemma}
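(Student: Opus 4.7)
The plan is to prove both implications directly, using the defining relation $\sigma\circ\tau=\rho\circ\sigma$ to transfer information between the algebraic structure of $K$ and the archimedean embeddings.

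For the $(\Leftarrow)$ direction, suppose such a non-trivial $\tau$ exists. The first step is to observe that applying the relation twice yields $\sigma\circ\tau^2=\rho\circ\sigma\circ\tau=\rho^2\circ\sigma=\sigma$ for every embedding $\sigma$; since some $\sigma$ is injective, this forces $\tau^2=\id$, so $\tau$ has order $2$. The second step is to show $K$ is totally complex: if some $\sigma:K\to\IC$ had image in $\IR$, then $\rho\circ\sigma=\sigma$, so $\sigma\circ\tau=\sigma$, contradicting the injectivity of $\sigma$ together with $\tau\neq\id$. The third step is to set $k=K^\tau$, which satisfies $[K:k]=2$ by Galois theory, and check that $k$ is totally real: for $x\in k$ and any embedding $\sigma$ of $K$, the relation gives $\sigma(x)=\sigma(\tau(x))=\rho(\sigma(x))=\overline{\sigma(x)}$, so $\sigma(x)\in\IR$. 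Together, these facts show that $K$ is a CM-field.

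For the $(\Rightarrow)$ direction, assume $K$ is a CM-field with maximal totally real subfield $k$ of index $2$, and let $\tau$ be the non-trivial element of $\Gal(K/k)$. Write $K=k(\alpha)$ with $\alpha^2\in k$; since $K$ is totally complex, $\sigma(\alpha^2)=\sigma(\alpha)^2$ is a negative real number for every embedding $\sigma$, so $\sigma(\alpha)$ is purely imaginary and $\overline{\sigma(\alpha)}=-\sigma(\alpha)$. The verification of $\sigma\circ\tau=\rho\circ\sigma$ then reduces to checking the identity on $k$ and on $\alpha$: on $k$ both sides act as $\sigma$ (the left side because $\tau$ fixes $k$, the right side because $\sigma(k)\subseteq\IR$), and on $\alpha$ both sides send $\alpha$ to $-\sigma(\alpha)$.

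The main obstacle, such as it is, lies in the $(\Leftarrow)$ direction, where one must be careful to deduce from the hypothesis \emph{for all embeddings $\sigma$} that the fixed field $k=K^\tau$ has index exactly $2$ and is totally real, rather than just producing some subfield with a compatibility condition. The rest is largely bookkeeping, using the fact that two $\IQ$-algebra maps $K\to\IC$ that agree on a generating set are equal.
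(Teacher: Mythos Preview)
Your proof is correct. The paper does not actually prove this lemma; it simply attributes it to Shimura with a citation to \cite[18.2. Lemma (i)]{shimura1998} and moves on. So there is no proof in the paper to compare against, and what you have written is essentially the standard argument (and, up to presentation, what one finds in Shimura): in one direction, the relation $\sigma\circ\tau=\rho\circ\sigma$ forces $\tau^2=\id$, rules out real embeddings, and makes the fixed field $K^{\tau}$ totally real of index~$2$; in the other direction, one writes $K=k(\alpha)$ with $\alpha^2\in k$, observes that total complexity forces $\sigma(\alpha)$ to be purely imaginary for every $\sigma$, and then checks the identity on the generators $k$ and $\alpha$. Two minor remarks: every embedding $\sigma$ is injective, not just ``some'' $\sigma$; and in showing $k$ is totally real you are implicitly using that every embedding of $k$ extends to one of $K$, which is standard but worth saying.
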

If $K$ is a CM-field and $k$ its maximal totally real subfield then the automorphism
$\tau$ from Lemma \ref{lem:Shimura} satisfies
\begin{alignat}1\label{eq:tau}
\tau=\sigma^{-1}\circ \rho \circ \sigma
\end{alignat}
for every homomorphisms $\sigma: K \to \IC$, and it is  a non-trivial automorphism of 
$K$ fixing $k$ (we drop $\circ$ and simply write $\sigma^{-1}\rho\sigma$).
Consequently, $\tau$ is the unique non-trivial element of $\Gal(K/k)$, and
\begin{alignat}1\label{eq:gentau}
\Gal(K/k)=\langle \tau\rangle.
\end{alignat}
Shimura \cite[18.2. Lemma (ii)]{shimura1998} also observed that Lemma \ref{lem:Shimura} implies the following result. 
\begin{lemma}[Shimura]\label{cor:comp}
The composite field of finitely many CM-fields is also a CM-field. 
\end{lemma}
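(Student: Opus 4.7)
The plan is to use Shimura's characterisation (Lemma \ref{lem:Shimura}) and reduce, by a straightforward induction on the number of factors, to the case of two CM-fields $K_1$ and $K_2$ with composite $L=K_1K_2$. For each $K_i$ let $\tau_i\in\Gal(K_i/\IQ)$ be the non-trivial automorphism guaranteed by Lemma \ref{lem:Shimura}, so that $\sigma\tau_i=\rho\sigma$ for every embedding $\sigma\colon K_i\to\IC$.

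The construction of the desired automorphism of $L$ will go as follows. Fix an embedding $\iota\colon L\to\IC$, and let $\sigma_i=\iota|_{K_i}$ so that $\iota(L)=\sigma_1(K_1)\sigma_2(K_2)$. Applying Shimura's condition to each $K_i$ gives $\rho(\sigma_i(K_i))=\sigma_i\tau_i(K_i)=\sigma_i(K_i)$, hence $\iota(L)$ is stable under $\rho$, and
\begin{equation*}
\tau:=\iota^{-1}\rho\iota
\end{equation*}
is a well-defined automorphism of $L$. To see that $\tau$ is non-trivial, I use that $L\supseteq K_1$ is totally complex (since $K_1$ is), so $\iota(L)\not\subseteq\IR$ and therefore $\rho|_{\iota(L)}\neq\id$.

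The key step is verifying Shimura's functional equation $\sigma\tau=\rho\sigma$ for every embedding $\sigma\colon L\to\IC$, not just for $\sigma=\iota$. Unwinding the definitions, this amounts to showing that $\varphi:=\sigma\iota^{-1}\colon\iota(L)\to\sigma(L)$ commutes with complex conjugation. On the subfield $\sigma_i(K_i)\subseteq\iota(L)$, both $\rho|_{\sigma_i(K_i)}$ and $\rho|_{\sigma'_i(K_i)}$ (where $\sigma'_i=\sigma|_{K_i}$) are conjugates of $\tau_i$ by Shimura's property for $K_i$, and a direct calculation yields
\begin{equation*}
\varphi\rho=\sigma'_i\tau_i\sigma_i^{-1}=\rho\varphi\quad\text{on }\sigma_i(K_i).
\end{equation*}
Since $\iota(L)$ is generated as a ring by $\sigma_1(K_1)\cup\sigma_2(K_2)$ and both $\varphi$ and $\rho$ are ring homomorphisms, the identity $\varphi\rho=\rho\varphi$ propagates to all of $\iota(L)$. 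Applying Lemma \ref{lem:Shimura} in the reverse direction then shows that $L$ is a CM-field.

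The only real subtlety is the verification that $\tau$ satisfies the Shimura condition for every embedding, not merely the one used in its definition; everything else is formal. Once this compatibility is in hand, the inductive step produces a CM-structure on any finite composite, completing the proof.
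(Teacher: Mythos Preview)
Your proof is correct. The paper does not actually supply its own argument for this lemma---it merely attributes the result to Shimura and remarks that it follows from Lemma~\ref{lem:Shimura}---so your derivation via the characterisation in Lemma~\ref{lem:Shimura} is exactly the approach the paper points to.
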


Blanksby and Loxton \cite[Theorem 1]{blanksby1978} proved a characterisation of CM-fields
that connects them to the group $\Sc_K$.
\begin{theorem}[Blanksby, Loxton]\label{thm:BlanksbyLoxton}
Let $K$ be a number field of degree $d>1$. Then 
$K$ is a CM-field if and only if $K=\IQ(\alpha)$ for some $\alpha\in \Sc_K.$
\end{theorem}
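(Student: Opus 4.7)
The plan is to handle the two directions separately.

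For the easier direction $(\Leftarrow)$, given $K=\IQ(\alpha)$ with $\alpha\in\Sc_K$ and $d>1$, the aim is to exhibit an automorphism $\tau$ of $K$ satisfying Shimura's condition from Lemma \ref{lem:Shimura}. Since $|\sigma(\alpha)|=1$ for every embedding $\sigma:K\to\IC$, one has $\overline{\sigma(\alpha)}=\sigma(\alpha)^{-1}$, so the set of Galois conjugates of $\alpha$ is closed under inversion. Hence $\alpha^{-1}$ is a Galois conjugate of $\alpha$, yielding a non-trivial $\tau\in\Gal(K/\IQ)$ of order two with $\tau(\alpha)=\alpha^{-1}$ (non-triviality uses $\alpha\neq\pm 1$, which holds because $d>1$). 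Evaluating at $\alpha$ gives $\sigma\tau(\alpha)=\sigma(\alpha)^{-1}=\overline{\sigma(\alpha)}=\rho\sigma(\alpha)$, so $\sigma\tau=\rho\sigma$ on a generator and hence on all of $K$; Lemma \ref{lem:Shimura} then concludes that $K$ is a CM-field.

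For the harder direction $(\Rightarrow)$, let $k$ be the maximal totally real subfield of $K$ with non-trivial Galois involution $\tau$. I would pick any primitive element $\beta$ of $K/\IQ$ and study the one-parameter family
\[
\alpha_c=(\beta+c)(\tau(\beta)+c)^{-1},\qquad c\in\IQ.
\]
A short computation gives $\tau(\alpha_c)=\alpha_c^{-1}$, so $\Norm(\alpha_c)=1$ and $\alpha_c\in\Sc_K$ by Proposition \ref{prop:SKnormkern}. Since $\alpha_c=\pm 1$ holds for at most one $c$, and any element of $\Sc_K\cap k$ equals $\pm 1$, almost every $c$ gives $\alpha_c\in K\setminus k$, which forces $K=k(\alpha_c)$. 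A tower computation then reduces $\IQ(\alpha_c)=K$ to the equivalent condition $\IQ(t_c)=k$, where $t_c:=\alpha_c+\alpha_c^{-1}\in k$; it thus suffices to show that $t_c$ generates $k$ over $\IQ$ for all but finitely many $c$.

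To prove the generation claim, fix distinct embeddings $\eta_1\neq\eta_2$ of $k$, extend each to some $\sigma_j:K\to\IC$, and use $\sigma_j\tau=\rho\sigma_j$ to obtain
\[
\eta_j(t_c)=\frac{2(x_j^2-y_j^2)}{x_j^2+y_j^2},\qquad x_j=\re\sigma_j(\beta)+c,\ y_j=\IM\sigma_j(\beta),
\]
with $y_j\neq 0$ by total complexity of $K$. The equation $\eta_1(t_c)=\eta_2(t_c)$ then factors cleanly as $(x_1y_2-x_2y_1)(x_1y_2+x_2y_1)=0$, a product of two linear polynomials in $c$. The main obstacle is to verify that neither linear factor vanishes identically: if one did, comparing the coefficient of $c$ and the constant term would force $\sigma_2\in\{\sigma_1,\sigma_1\tau\}$ (using Shimura's relation once more), whence $\eta_2=\eta_1$ -- a contradiction. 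Hence each pair of embeddings contributes only finitely many bad $c$, and summing over pairs leaves infinitely many good $c$ for which $t_c$ generates $k$ and $\alpha_c$ generates $K$.
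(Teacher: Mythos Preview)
The paper does not give its own proof of this theorem; it simply cites Blanksby--Loxton \cite{blanksby1978} and adds a one-line remark explaining why their hypothesis (that the \emph{maximal} conjugate has modulus $1$) is equivalent to membership in $\Sc_K$. So your proposal is not a comparison target but a genuine replacement: you are supplying a self-contained argument where the paper outsources one.

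Your argument is correct. A couple of small remarks:

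\emph{On the $(\Leftarrow)$ direction.} Writing $\tau\in\Gal(K/\IQ)$ is slightly loose, since $K/\IQ$ need not be Galois; what you actually use is that $\alpha$ and $\alpha^{-1}$ are both roots of the minimal polynomial of $\alpha$, both lie in $K=\IQ(\alpha)$, and both have degree $d$ over $\IQ$, so there is a (unique) field automorphism $\tau$ of $K$ with $\tau(\alpha)=\alpha^{-1}$. With that adjustment the verification $\sigma\tau=\rho\sigma$ on the generator $\alpha$, and hence on $K$, goes through exactly as you wrote, and Lemma~\ref{lem:Shimura} applies.

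\emph{On the $(\Rightarrow)$ direction.} Your use of Proposition~\ref{prop:SKnormkern} is not circular: its proof (given as Proposition~\ref{prop: normkern2}) relies only on Shimura's Lemma~\ref{lem:Shimura}, not on the Blanksby--Loxton theorem. The computation of $\eta_j(t_c)$ is clean once one notes $\sigma_j(\tau(\beta)+c)=\overline{\sigma_j(\beta)}+c$, and the factorisation $(x_1y_2-x_2y_1)(x_1y_2+x_2y_1)=0$ is exactly right. Your non-degeneracy check is also correct: identical vanishing of the first factor forces $\sigma_1(\beta)=\sigma_2(\beta)$, and of the second forces $\sigma_2(\beta)=\overline{\sigma_1(\beta)}=\sigma_1\tau(\beta)$; in either case $\eta_1=\eta_2$ since $\tau$ fixes $k$. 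Finally, the finitely many excluded values of $c$ (from the pairs $(\eta_1,\eta_2)$, together with the single $c$ for which $\alpha_c=-1$) still leave infinitely many rational $c$ with $\IQ(\alpha_c)=K$, which is all you need. The denominators $\tau(\beta)+c$ never vanish for $c\in\IQ$ because $\tau(\beta)$ has degree $d>1$.

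In short: the paper defers to the literature, while you give a direct and valid proof using only Shimura's characterisation and the norm-kernel description of $\Sc_K$ already established in Section~\ref{sec:basics}.
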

In fact \cite[Theorem 1]{blanksby1978} is stated slightly differently and we are using the fact that if the maximal modulus of the
conjugates (over $\IQ$) of an algebraic number $\alpha\in \IC$  is equal to $1$, then all conjugates lie on the unit circle. This is because
the complex conjugate $\rho(\alpha)$ is also a conjugate (over $\IQ$) of $\alpha$. Hence $\alpha$ is reciprocal.

Theorem \ref{thm:BlanksbyLoxton} in conjunction with Lemma \ref{cor:comp} yields the first part of Proposition \ref{prop:SKbasic}.
\begin{lemma}\label{lem:CMchar}
Let $K$ be a number field. Then $\Sc_K \neq \{\pm 1\}$
if and only if  $\IQ(\Sc_K)$  is a CM-field.   
\end{lemma}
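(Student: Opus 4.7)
The plan is to combine the Blanksby--Loxton characterisation (Theorem~\ref{thm:BlanksbyLoxton}) with Shimura's stability result that composites of CM-fields are CM (Lemma~\ref{cor:comp}). Both directions reduce to a handful of one-line observations.

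For the forward direction, suppose $\Sc_K\neq\{\pm 1\}$. A rational number of complex absolute value $1$ equals $\pm 1$, so every element of $\Sc_K\cap \IQ$ is $\pm 1$, and hence $\Sc_K\setminus\IQ$ is non-empty. For each such $\alpha\in\Sc_K\setminus\IQ$ I would show that $\IQ(\alpha)$ is a CM-field as follows: every embedding $\sigma':\IQ(\alpha)\to\IC$ extends to an embedding $\sigma:K\to\IC$, so $|\sigma'(\alpha)|=|\sigma(\alpha)|=1$. This gives $\alpha\in \Sc_{\IQ(\alpha)}$ with $[\IQ(\alpha):\IQ]\geq 2$, and Theorem~\ref{thm:BlanksbyLoxton} applies to $\IQ(\alpha)$.

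Since $\IQ(\Sc_K)$ is a subfield of the number field $K$, it is itself a finite extension of $\IQ$, so it already equals the compositum of finitely many fields $\IQ(\alpha_1),\dots,\IQ(\alpha_m)$ with $\alpha_i\in \Sc_K\setminus\IQ$ (generators $\alpha_i\in\IQ$ contribute trivially to the compositum). Each $\IQ(\alpha_i)$ is a CM-field by the previous step, so a repeated application of Lemma~\ref{cor:comp} shows that the compositum $\IQ(\Sc_K)$ is a CM-field.

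The converse is almost immediate: CM-fields by definition have degree at least $2$, so $\IQ(\Sc_K)\neq\IQ$ forces $\Sc_K$ to contain an element outside $\IQ$, which is in particular different from $\pm 1$. I do not anticipate a substantive obstacle in this argument; the only point requiring care is verifying that the conjugates of $\alpha$ inside $\IQ(\alpha)$ still have modulus one, which is precisely the embedding-extension step in the forward direction.
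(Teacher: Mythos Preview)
Your proof is correct and follows essentially the same approach as the paper: apply Theorem~\ref{thm:BlanksbyLoxton} to each $\IQ(\alpha_i)$ for a finite generating set $\alpha_1,\dots,\alpha_m\in\Sc_K\setminus\{\pm1\}$, then invoke Lemma~\ref{cor:comp} on the compositum; the converse is declared trivial in the paper as well. Your version is slightly more explicit in verifying $\alpha\in\Sc_{\IQ(\alpha)}$ via embedding extension and in spelling out the backward direction, but there is no substantive difference.
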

\begin{proof}
Suppose $\alpha_1,\ldots,\alpha_n\in \Sc_K\backslash\{\pm 1\}$ with $\IQ(\Sc_K)=\IQ(\alpha_1,\ldots,\alpha_n)$. By Theorem \ref{thm:BlanksbyLoxton} we see that $K_i=\IQ(\alpha_i)$
is CM for $1\leq i\leq n$. By Lemma \ref{cor:comp} we conclude that $\IQ(\Sc_K)=K_1\cdots K_n$ is also CM. The other direction is trivial.
\end{proof}
 
 Next we restate the Proposition \ref{prop:SKnormkern} and we prove it.
\begin{proposition}\label{prop: normkern2}
Let $K$ be a CM-field and $k$ its maximal totally real subfield. Then $\Sc_K$ is the kernel of the norm map $\Norm:K^\times\to k^\times$.
\end{proposition}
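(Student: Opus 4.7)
The plan is to use the characterisation of $\Gal(K/k)$ from equations \eqref{eq:tau} and \eqref{eq:gentau} to identify the relative norm with a product involving the complex absolute value, and then to translate the condition $|\sigma(\alpha)|=1$ into the norm being trivial.

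First, since $K$ is totally complex and $k$ is totally real, every archimedean place $v$ of $K$ is complex and corresponds to a pair $\{\sigma,\rho\sigma\}$ of complex conjugate embeddings, with $|\alpha|_v=|\sigma(\alpha)|$ for our chosen representative. Hence $\alpha\in\Sc_K$ if and only if $|\sigma(\alpha)|=1$ for every embedding $\sigma:K\to\IC$.

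Next, by \eqref{eq:gentau}, $\Gal(K/k)=\langle\tau\rangle$, so for any $\alpha\in K^\times$,
\begin{equation*}
\Norm(\alpha)=\alpha\,\tau(\alpha).
\end{equation*}
Applying an arbitrary embedding $\sigma:K\to\IC$ and using \eqref{eq:tau} yields
\begin{equation*}
\sigma(\Norm(\alpha))=\sigma(\alpha)\,\sigma(\tau(\alpha))=\sigma(\alpha)\,\rho(\sigma(\alpha))=|\sigma(\alpha)|^2.
\end{equation*}
This identity is the key computational input.

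From here both inclusions follow. If $\Norm(\alpha)=1$, then for every embedding $\sigma$ we obtain $|\sigma(\alpha)|^2=1$, hence $\alpha\in\Sc_K$. Conversely, suppose $\alpha\in\Sc_K$, so $|\sigma(\alpha)|=1$ for every $\sigma$; then for any such $\sigma$ we have $\sigma(\Norm(\alpha))=1$. Since $\Norm(\alpha)\in k$ and the restriction of $\sigma$ to $k$ is a field embedding of $k$ into $\IR$ (in particular injective), this forces $\Norm(\alpha)=1$.

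There is no real obstacle here: the proof is essentially bookkeeping once Shimura's identification $\tau=\sigma^{-1}\rho\sigma$ is in hand. The only point worth stating carefully is that $\Sc_K$ is defined via all archimedean places $v$, while the norm identity is naturally stated via complex embeddings, so one must invoke the fact that for a CM-field the two parametrisations of the archimedean data agree up to the $\{\sigma,\rho\sigma\}$ pairing.
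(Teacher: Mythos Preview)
Your proof is correct and follows essentially the same approach as the paper: both use \eqref{eq:tau} and \eqref{eq:gentau} to derive the key identity $\sigma(\Norm(\alpha))=|\sigma(\alpha)|^2$, and then read off both inclusions from it. The only cosmetic difference is that the paper applies $\sigma^{-1}$ to pull the identity back to $K$, whereas you invoke injectivity of $\sigma$ on $k$; these are equivalent.
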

\begin{proof}
Let $\alpha\in K^\times$ be in the kernel of the norm map. Using (\ref{eq:gentau}) and (\ref{eq:tau}) gives
$$1=\Norm(\alpha)=\alpha\tau(\alpha)=\alpha\sigma^{-1}(\rho(\sigma(\alpha)))$$
for all homomorphisms $\sigma: K \to \IC$.
Applying $\sigma$ on both sides gives 
$$1=\sigma(\alpha)\rho(\sigma(\alpha))=|\sigma(\alpha)|^2.$$ 
Hence, $|\alpha|_v=1$ for all archimedean places $v$ of $K$,
and so $\alpha\in \Sc_K$. 

Now suppose $\beta\in \Sc_K$. Then $|\sigma(\beta)|^2=\sigma(\beta)\rho(\sigma(\beta))=1$ for all homomorphism $\sigma: K \to \IC$.
Applying $\sigma^{-1}$ on both sides gives   $1=\beta\sigma^{-1}(\rho(\sigma(\beta)))=\beta\tau(\beta)$. Thus $\Norm(\beta)=1$.
\end{proof}
We also learn from this proof that if $\alpha\in K^\times$ then $\sigma(\Norm(\alpha))=|\sigma(\alpha)|^2>0$
for any  homomorphism $\sigma: K \to \IC$. Hence, the norm $\Norm$ maps to the subset of $k^\times$
of totally positive elements.

As before let $\tau$ be the unique non-trivial automorphism of $K$ fixing $k$, and let
$\psi:K^\times\to K^\times$ be the group homomorphism defined by  
\begin{alignat}1\label{map: psi}
\psi(\beta)=\frac{\beta}{\tau(\beta)}.
\end{alignat}
We note that the kernel of $\psi$ is  $k^\times$. Since $K/k$ is a cyclic extension and $\Gal(K/k)=\langle \tau\rangle$
it follows from Hilbert's Theorem 90 that $\ker \Norm=\IM \psi$. 
Hence, the maps $\psi$ and the norm $N_{K/k}$ yield an exact sequence
\begin{alignat}1\label{exactseq}
k^\times \overset{\id}\longrightarrow K^\times \overset{\psi}{\longrightarrow}\ K^\times \overset{N_{K/k}}{\longrightarrow}\ k^\times,
\end{alignat}
and we know from Proposition \ref{prop: normkern2} that the group $\Sc_K$  is given by  the kernel of the norm map $N_{K/k}$ which in turn is equal to the image of $\psi$. Hence,
\begin{alignat}1\label{eq:SKpsi}
\Sc_K=\IM \psi \cong K^\times/k^\times.
\end{alignat}

\section{The group structure of $\Sc_K$}\label{sec: grpstruc}
In this section we prove that $\Sc_K/\Tor(K^\times)$ is a free abelian group of countably infinite rank,
proving the second claim of Proposition \ref{prop:SKbasic}.

\begin{lemma}\label{lem:infrank}  Let $K$ be a CM-field and let $k$ be its maximal totally real subfield. 
Then $\Sc_K/\Tor(K^{\times})$ is a free abelian group of countably infinite rank.  
\end{lemma}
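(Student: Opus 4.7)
The plan is to prove the two parts separately: first, that $\Sc_K/\Tor(K^\times)$ is \emph{free} abelian, by embedding it into a standard free abelian group; and second, that its rank is infinite, by exhibiting an explicit infinite family of $\IZ$-independent classes coming from split primes.

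For the first part, I would use the map
\begin{equation*}
\Phi\colon K^\times \longrightarrow \bigoplus_{\p\nmid\infty}\IZ,\qquad \alpha\longmapsto (\ord_\p(\alpha))_\p,
\end{equation*}
whose kernel is $\Oseen_K^\times$. Restricting $\Phi$ to $\Sc_K$ gives a homomorphism whose kernel is $\Sc_K\cap \Oseen_K^\times$, which equals $\Tor(K^\times)$ by (\ref{eq:TorSK}). Hence $\Phi$ descends to an injection $\Sc_K/\Tor(K^\times)\hookrightarrow \bigoplus_{\p\nmid\infty}\IZ$. The codomain is a free abelian group (of countable rank, since $K$ has countably many prime ideals), and every subgroup of a free abelian group is free, so $\Sc_K/\Tor(K^\times)$ is free abelian of some at most countable rank.

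For the second part, I would construct an infinite linearly independent family as follows. By the Chebotarev density theorem, infinitely many primes $P$ of $k$ split completely in $K/k$; write $P\Oseen_K=\p\tau(\p)$ with $\p\neq\tau(\p)$. Since the class group $\Cl(K)$ is finite, there is an integer $m\geq 1$ and $\gamma\in\Oseen_K$ with $\p^m=(\gamma)$; applying $\tau$ yields $\tau(\p)^m=(\tau(\gamma))$. The element $\eta_P:=\gamma/\tau(\gamma)=\psi(\gamma)$ lies in the image of $\psi$, hence in $\Sc_K$ by Proposition \ref{prop: normkern2} and (\ref{eq:SKpsi}), and its fractional ideal is $\p^m\tau(\p)^{-m}$. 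Choosing a sequence of distinct split primes $P_1,P_2,\ldots$ produces elements $\eta_{P_1},\eta_{P_2},\ldots\in\Sc_K$ whose images under $\Phi$ have pairwise disjoint supports. Any relation $\prod_i\eta_{P_i}^{a_i}\in\Tor(K^\times)$ lies in $\ker\Phi$, and the disjoint-support property forces every $a_i=0$. Thus the classes of $\eta_{P_i}$ are $\IZ$-independent in $\Sc_K/\Tor(K^\times)$, proving that the rank is infinite.

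The only genuine step to verify carefully is that $\eta_P\in\Sc_K$, which is immediate from the identification $\Sc_K=\IM\psi$ via Hilbert 90; the independence of the family is automatic once we arrange disjoint supports. I considered a more direct approach using $\Sc_K\cong K^\times/k^\times$ from (\ref{eq:SKpsi}), but tracking $\Tor(K^\times)$ through this isomorphism is less clean than the valuation embedding, which handles freeness and torsion-freeness in one stroke.
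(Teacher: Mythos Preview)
Your proof is correct, but it follows a genuinely different route from the paper's.

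For freeness, the paper equips $K^\times/\Tor(K^\times)$ with the logarithmic Weil height, observes that this is a \emph{discrete norm} (positivity, homogeneity, triangle inequality, and a positive lower bound away from the identity), and then invokes the theorem of Lawrence, Stepr\=ans, and Zorzitto that any abelian group carrying a discrete norm is free. Your argument instead embeds $\Sc_K/\Tor(K^\times)$ into $\bigoplus_{\p}\IZ$ via valuations and appeals to the classical fact that subgroups of free abelian groups are free. Your route is more elementary and self-contained; the paper's route highlights the r\^ole of the Weil height and applies uniformly to any subgroup of $K^\times/\Tor(K^\times)$.

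For infinite rank, the paper cites a result of Brandis that $K^\times/k^\times$ is never finitely generated for a proper extension of number fields, and transfers this to $\Sc_K$ via the isomorphism $\hat\psi$. You instead construct an explicit infinite $\IZ$-independent family $\eta_P=\psi(\gamma)$ indexed by split primes, using finiteness of the class group and disjointness of the valuation supports. Your construction is concrete and avoids the external reference; the paper's approach is softer but imports a stronger statement. One minor remark: invoking Chebotarev is more than you need---in a quadratic extension the split primes already have density $1/2$ by the factorisation of $\zeta_K/\zeta_k$---but this does not affect correctness.
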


\begin{proof}  
It follows from the exact sequence (\ref{exactseq}) that the induced homomorphism
\begin{alignat}1\label{map:indhom}
\widehat{\psi} : K^{\times}/k^{\times} \rightarrow \Sc_K
\end{alignat}
is an isomorphism of multiplicative groups.  By a result of Brandis \cite{brandis1965} the group $K^{\times}/k^{\times}$ is not finitely generated.  
Because (\ref{map:indhom}) is an isomorphism, we conclude that the group $\Sc_K$ is  not finitely generated.  

From (\ref{eq:TorSK}) we get
\begin{alignat}1\label{eq:Tors}
\Tor(\Sc_K) = \Tor\bigl(K^{\times}\bigr),
\end{alignat}
and therefore $\Tor(\Sc_K)$ is a finite cyclic group of order $2q$ where $q | \Delta_K$.  
Hence, the torsion-free abelian group $\Sc_K/\Tor(K^\times)$ is not finitely generated.

We note that the absolute, logarithmic Weil height $h(\cdot)=\log H(\cdot)$ is
well defined on the multiplicative quotient group
\begin{equation*}\label{eq:quotgroup}
\KT = \G_K.
\end{equation*}
Moreover, the Weil height
\begin{alignat*}1
h : \G_K \rightarrow [0, \infty)
\end{alignat*}
satisfies (here we write $\alpha$ and $\beta$ for coset representatives in $\G_K$):
\begin{itemize}
\item[ ]
\item[(i)] $0 \leq h(\alpha)$ for $\alpha$ in $\G_K$, and $0 = h(\alpha)$ if and only if $\alpha = 1$,
\item[ ]
\item[(ii)] $h\bigl(\alpha^m\bigr) = |m| h(\alpha)$ for each $m \in \IZ$ and $\alpha$ in $\G_K$,
\item[ ]
\item[(iii)] $h\bigl(\alpha \beta\bigr) \leq h(\alpha) + h(\beta)$ for each $\alpha$ and $\beta$ in $\G_K$,
\item[ ]
\item[(iv)] there exists $0 < \varepsilon(K)$ so that $\varepsilon(K) \leq h(\alpha)$ for each $\alpha \neq 1$ in $\G_K$.
\item[ ]
\end{itemize}
These four conditions imply that $h$ is a  discrete norm on the abelian group $\G_K$, and on all of its 
subgroups. 
It is known (see \cite{lawrence1984}, \cite{steprans1985},
and \cite{zorzitto1985}) that an abelian group with a discrete norm must be a free group.   As
\begin{alignat*}1
\Sc_K/\Tor(K^\times) \subseteq \G_K,
\end{alignat*}
we find that the quotient group $\Sc_K/\Tor(K^\times)$ is a free group and not finitely generated. Hence, 
$\Sc_K/\Tor(K^\times)$ has (countably) infinite rank.  
\end{proof}

\section{Lattice point counting}\label{sec: lattcount}
Throughout this section let $D\geq 2$ be an integer.
By a lattice $\La$ in $\IR^D$ we mean a discrete, free $\IZ$-module of rank $D$.
Let $\lambda_1(\Lambda)$ be the shortest euclidean length of a non-zero vector of $\Lambda$ 
\begin{alignat*}3
\lambda_1=\min \{|\bx|;  \bx\in \Lambda, \bx\neq 0\}.
\end{alignat*}
Let $M$ be a positive integer,
and let $L$ be a non-negative real number.
We say that a set $S$ is in Lip$(D,M,L)$ if 
$S$ is a subset of $\IR^D$, and 
if there are $M$ maps 
$$\phi_1,\ldots,\phi_M:[0,1]^{D-1}\longrightarrow \IR^D$$
satisfying a Lipschitz condition
\begin{alignat*}3
|\phi_i(\vx)-\phi_i(\vy)|\leq L|\vx-\vy| \text{ for } \vx,\vy \in [0,1]^{D-1}, i=1,\ldots,M 
\end{alignat*}
such that $S$ is covered by the images
of the maps $\phi_i$. 

If the boundary $\partial S$ is in Lip$(D,M,L)$ then $\partial S$ has measure zero and thus $S$ is measurable (see, e.g., \cite{Spain}).

The following Lemma is  \cite[Lemma 3.1]{Widmer_Mathemtika_2018}.
\begin{lemma}\label{lem:lattcount}
Let $\Lambda$ be a lattice in $\IR^D$
Let $S$ be a set in $\IR^D$ such that
the boundary $\partial S$ of $S$ is in Lip$(D,M,L)$, and suppose $S$ lies in the closed euclidean 
ball with centre $P$.
Then $S$ is measurable, and moreover,
\begin{alignat*}3
\left|\#(\Lambda \cap S)-\frac{\Vol S}{\det \Lambda}\right|
\leq D^{3D^2/2} M\left(\left(\frac{L}{\lambda_1}\right)^{D-1}+1^*(S\cap \Lambda)\right),
\end{alignat*}
where $1^*(S\cap \Lambda)=0$ if $S\cap \Lambda=\emptyset$ and  $1^*(S\cap \Lambda)=1$
otherwise.
\end{lemma}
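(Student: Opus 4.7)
The plan is the classical geometry-of-numbers counting argument: choose a well-shaped fundamental domain for $\La$, and reduce the estimate to bounding the number of fundamental translates that meet $\partial S$.

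First, I would apply Minkowski's second theorem to pick a basis $v_1,\ldots,v_D$ of $\La$ with $|v_i|$ bounded above by a dimensional multiple of the $i$-th successive minimum $\lambda_i$. Let $F=\{\sum t_iv_i:0\le t_i<1\}$ be the half-open fundamental parallelepiped, so $\Vol F=\det\La$ and $\IR^D=\bigsqcup_{\ell\in\La}(\ell+F)$. Classify lattice points as interior $I=\{\ell:\ell+F\subseteq S\}$, boundary $B=\{\ell:(\ell+F)\cap\partial S\ne\emptyset\}$, or exterior. Since $\La\cap S\subseteq I\cup B$ while $I\subseteq\La\cap S$, one gets $|\#(\La\cap S)-\#I|\le\#B$; splitting $\Vol S=\sum_\ell\Vol(S\cap(\ell+F))$ and comparing term by term with $\#I\cdot\det\La$ likewise gives $|\Vol S-\#I\cdot\det\La|\le\#B\cdot\det\La$. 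Combining,
\begin{equation*}
\left|\#(\La\cap S)-\frac{\Vol S}{\det\La}\right|\le 2\#B.
\end{equation*}

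The main step is bounding $\#B$. I would subdivide each cube $[0,1]^{D-1}$ into $N^{D-1}$ subcubes of side $1/N$, with $N=\lceil L\sqrt{D-1}/\lambda_1\rceil$; by the Lipschitz property each image of a subcube has diameter at most $\lambda_1$ and so fits in a euclidean ball of radius $\lambda_1$. A standard packing argument, using that distinct lattice points are at mutual distance $\ge\lambda_1$ and that $F$ has diameter bounded by a dimensional multiple of $\lambda_D$, shows that any such ball meets at most a purely dimensional number of translates $\ell+F$. Summing over the $MN^{D-1}$ Lipschitz pieces yields $\#B\le c(D)M((L/\lambda_1)^{D-1}+1)$, the ``$+1$'' arising from the ceiling in $N$.

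To convert the ``$+1$'' into the announced indicator $1^*(S\cap\La)$, I would split cases. When $S\cap\La\ne\emptyset$, the indicator is $1$ and nothing more is needed. When $S\cap\La=\emptyset$ and $L\ge\lambda_1$, the ceiling satisfies $N\le 2L\sqrt{D-1}/\lambda_1$, so the $+1$ is absorbed into the main term. In the remaining subcase $S\cap\La=\emptyset$ and $L<\lambda_1$, the fact that each Lipschitz piece $\phi_i([0,1]^{D-1})$ has diameter at most $L\sqrt{D-1}$ forces $S$ (which has no lattice point) into a union of at most $M$ blobs of volume $\le c(D)L^D$ each, giving $\Vol S/\det\La\le c'(D)M(L/\lambda_1)^D\le c'(D)M(L/\lambda_1)^{D-1}$ via Minkowski's lower bound $\det\La\ge c''(D)\lambda_1^D$. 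The main obstacle is not conceptual but arithmetical: propagating dimensional constants through the Minkowski basis, the packing estimate, and the subdivision to reach the clean exponent $3D^2/2$ in $D^{3D^2/2}$ is a careful exercise in bookkeeping.
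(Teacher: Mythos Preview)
The paper does not prove this lemma at all; it is simply quoted as \cite[Lemma~3.1]{Widmer_Mathemtika_2018} and used as a black box. Your outline is the standard geometry-of-numbers argument that underlies that reference, and the reduction to bounding $\#B$ together with the Lipschitz subdivision is correct in spirit. One quibble: the ``packing argument'' you invoke---using only that distinct lattice points are $\lambda_1$-separated and that $\mathrm{diam}(F)\ll\lambda_D$---does not by itself yield a purely dimensional bound on the number of translates $\ell+F$ meeting a ball of radius $\lambda_1$ when the lattice is skew. What is actually needed is the coordinate estimate $|t_i|\le c(D)|x|/|v_i|$ valid for a reduced basis, which bounds the spread of a small set in each basis direction.

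There is, however, a genuine gap in your final subcase $S\cap\Lambda=\emptyset$, $L<\lambda_1$. You assert that the Lipschitz pieces ``force $S$ into a union of at most $M$ blobs of volume $\le c(D)L^D$ each'', i.e.\ $\Vol S\le c(D)ML^D$. But the Lipschitz hypothesis covers only $\partial S$, not $S$: a ball of radius $R$ has its boundary parametrisable by $M\approx(R/L)^{D-1}$ maps of constant $L$, yet then $ML^D\approx R^{D-1}L\ll R^D\approx\Vol S$ whenever $L\ll R$. So the volume inequality you implicitly use is false in general, and the bare fact that $S$ misses $\Lambda$ does not repair it. (Note also that the statement as reproduced here specifies a centre $P$ for the containing ball but no radius; in the source the radius enters, and is what makes this subcase go through.) You should consult \cite{Widmer_Mathemtika_2018} directly for this step rather than improvise it.
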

If we can choose $P$ to be the origin and the latter is not contained in $S$
then we can get rid of the extra $1^*$ in the error term, and this gives a slightly
more convenient version for our purposes.
\begin{lemma}\label{lem: latticecounting}
Let $\Lambda$ be a lattice in $\IR^D$ and  $\lambda_1=\lambda_1(\Lambda)$.
Let $S$ be a set in $\IR^D$ such that
the boundary $\partial S$ of $S$ is in Lip$(D,M,L)$. Suppose $S$ is contained in the closed euclidean ball about the origin of radius $L$,
and the origin is not contained in $S$.
Then
\begin{alignat*}3
\left|\#(\Lambda\cap S)-\frac{\Vol S}{\det \Lambda}\right|
\leq 2D^{3D^2/2} M\left(\frac{L}{\lambda_1}\right)^{D-1}.
\end{alignat*}
\end{lemma}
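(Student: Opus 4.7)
The plan is to derive this directly from the preceding Lemma \ref{lem:lattcount}, which gives the same bound but with an extra additive $1^*(S \cap \Lambda)$ term inside the parentheses. With the center $P$ of the covering ball chosen to be the origin (allowed since $S$ is assumed to lie in the closed euclidean ball of radius $L$ about $0$), the only thing to do is show that the extra $1^*(S\cap \Lambda)$ term can always be absorbed into the $(L/\lambda_1)^{D-1}$ term at the cost of a factor $2$.

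The key observation is the following dichotomy. Either $S \cap \Lambda = \emptyset$, in which case $1^*(S\cap \Lambda) = 0$ and Lemma \ref{lem:lattcount} already yields the claimed bound (with room to spare). Or $S \cap \Lambda \neq \emptyset$, and then I pick any $\vx \in S \cap \Lambda$. Since $0 \notin S$ by hypothesis, such a $\vx$ is a nonzero lattice point, and since $S$ is contained in the closed ball of radius $L$ centred at the origin, we have $|\vx| \leq L$. Therefore
\begin{equation*}
\lambda_1 \leq |\vx| \leq L, \qquad \text{so} \qquad \left(\frac{L}{\lambda_1}\right)^{D-1} \geq 1 = 1^*(S \cap \Lambda).
\end{equation*}

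In this second case the $1^*(S\cap \Lambda)$ contribution is bounded by $(L/\lambda_1)^{D-1}$, so summing the two terms in the parenthetical factor of Lemma \ref{lem:lattcount} yields at most $2(L/\lambda_1)^{D-1}$. Combining both cases gives the stated inequality with the constant $2D^{3D^2/2}M$. I do not anticipate any real obstacle here; the argument is a one-line reduction to Lemma \ref{lem:lattcount}, and the only thing requiring care is verifying that the hypotheses (in particular $S\subseteq \overline{B_L(0)}$ and $0\notin S$) are used precisely to force the existence of a nonzero short lattice vector whenever $S$ meets $\Lambda$.
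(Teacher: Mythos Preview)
Your proposal is correct and follows essentially the same approach as the paper's proof: reduce to Lemma~\ref{lem:lattcount} and absorb the $1^*(S\cap\Lambda)$ term into $(L/\lambda_1)^{D-1}$. The only cosmetic difference is that the paper splits cases according to whether $L<\lambda_1$ or $L\geq\lambda_1$, while you split on whether $S\cap\Lambda$ is empty or not; these are equivalent dichotomies given the hypotheses $S\subseteq\overline{B_L(0)}$ and $0\notin S$.
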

\begin{proof}
The claim follows from Lemma \ref{lem:lattcount} upon noticing that
if $L<\lambda_1$ then $\Lambda\cap S=\emptyset$ so that $1^*(\Lambda\cap S)=0$. 
And if $L\geq \lambda_1$ then $L/\lambda_1\geq 1^*(\Lambda\cap S)$.
\end{proof}

\section{Preliminaries}\label{sec: prelim}
For $1\leq n\leq N$ let $\sigma_n,\sigma_{n+N}$ be the $N$ pairs of complex conjugate embeddings of $K$ into $\IC$.
Write $\bsigma:K\to \IC^N$ for the Minkowski-embedding defined by 
$$\bsigma(\beta)=(\sigma_1(\beta),\ldots,\sigma_N(\beta)).$$
Recall that $k$,  the maximal totally real subfield of $K$, has degree $N$,
and its $N$ distinct embeddings into $\IC$ are given by the restrictions of  $\sigma_1,\ldots, \sigma_N$ to $k$. 
Let $\l:k^\times\to \IR^N$ be the usual logarithmic mapping defined by 
$$\l(\beta)=(2\log|\sigma_1(\beta)|,\ldots, 2\log|\sigma_N(\beta)|).$$ 
Let $F$ be a fundamental domain of $\Sigma=\{(z_n)_n\in \IR^N; \sum_n z_n=0\}$ for the action of
the subgroup $\IU_k=\l(\Oseen_k^\times)$ on $\Sigma$.
If $N=1$ (i.e., $k=\IQ$) we have $\IU_k=\{0\}$, and so there is no choice except $F=\Sigma=\{0\}$.
If $N>1$ then $\IU_k$ is a lattice in $\Sigma$ and we have many choices  for $F$. It is convenient to have an $F$ with ``simple'' geometry, therefore we take  
$$F=[0,1)u_1+\cdots +[0,1)u_{N-1}$$ 
where $(u_1,\ldots, u_{N-1})$ is a reduced basis of the unit lattice $\IU_k$, in the sense that $|u_1|\leq \cdots \leq |u_{N-1}|\leq c_N R_k$ for
some constant $c_N>0$. The existence of such a reduced basis follows from the general reduction theory and the additional fact $|u_1|\gg_N 1$ which is a consequence
of Northcott's Theorem.
Let $T\geq 1$. Let $([K_v:\IQ_v])_{v\mid \infty}=(2,\ldots,2)$, and
consider the vector sum
$$F(T)=F+(2,\ldots,2)(-\infty,\log T].$$ 
Then
$F(\infty)=F+(2,\ldots,2)(-\infty,\infty)$ is a fundamental domain for the action of the subgroup $\IU_K$ on $\IR^N$.

Let $\J=\J_1\times \cdots \times \J_N$ where each $\J_j$ is an arbitrary subset of $[0,2\pi)$.
We define the set
\begin{alignat}1\label{set: SF}
S_F(\J;T)=\{\bx=(x_n)_n\in (\IC^\times)^N; (2\log|x_n|)_n\in F(T), \text{ and }(\arg(x_n))_n\in \J\}.
\end{alignat}
We note that $S_F(\J;T)$ is homogeneously expanding, i.e.,
\begin{alignat}1\label{eq: SFhom}
S_F(\J;T)=TS_F(\J;1).
\end{alignat}

Now let $\I=\I_1\times \cdots \times \I_N\subseteq [0,2\pi)^N$ be a product of intervals as in Theorem \ref{thm: main}, and let
\begin{alignat}1\label{def: Istar}
\I^*=\frac{1}{2}\I_1\times \prod_{n=2}^N \left(\frac{1}{2}\I_n\cup \left(\frac{1}{2}\I_n+\pi\right)\right)
\end{alignat}
One step in the proof of Theorem \ref{thm: main} is to count lattice points inside the set $S_F(\I^*;T)$ for suitable $T$. To this end we will need the following two lemmas.
Recall from Section \ref{sec: lattcount}
that a set $S\subseteq \IR^D$ is in Lip$(D,M,L)$ if  
if there are $M$ maps 
$$\phi_1,\ldots,\phi_M:[0,1]^{D-1}\longrightarrow \IR^D$$
satisfying a Lipschitz condition
\begin{alignat*}3
|\phi_i(\vx)-\phi_i(\vy)|\leq L|\vx-\vy| \text{ for } \vx,\vy \in [0,1]^{D-1}, i=1,\ldots,M 
\end{alignat*}
such that $S$ is covered by the images
of the maps $\phi_i$.

\begin{lemma}\label{lem: Lip}
The set  $S_F(\I^*;1)$  is contained in the closed euclidean ball about the origin of radius $L$, and its boundary $\partial(S_F(\I^*;1))$ is in Lip$(2N,M,L)$ with $M=M(N)$ and $L=L(K)$ depending only on $K$. Further, the origin is not contained in $S_F(\I^*;1)$.
\end{lemma}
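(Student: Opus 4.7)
The plan is to construct an explicit smooth parametrization of $S_F(\I^*;1)$ from which all three assertions can be read off directly. Substituting $r=e^t$ in the defining relation $(2\log|x_n|)_n\in F(1)=F+(2,\ldots,2)(-\infty,0]$ and writing $F\ni f=\sum_{j=1}^{N-1}s_ju_j$ with $s_j\in[0,1)$, the set $S_F(\I^*;1)$ is precisely the image of the smooth map
$$\phi:(0,1]\times[0,1)^{N-1}\times\I^*\longrightarrow(\IC^\times)^N,\qquad(r,s,\theta)\mapsto\bigl(r\,e^{f_n(s)/2}\,e^{i\theta_n}\bigr)_{n=1}^N,$$
where $f_n(s)=\sum_j s_j(u_j)_n$ denotes the $n$-th coordinate of $\sum_j s_j u_j$. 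This $\phi$ is injective since $(|x_n|)_n$ determines $(r,s)$ uniquely (using that $F$ is a fundamental domain for $\IU_k$) and $\arg(x_n)=\theta_n$.

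The first and third assertions are immediate from this formula. Since the reduced basis satisfies $|u_j|\leq c_NR_k$, one has $|f_n(s)|\leq (N-1)c_NR_k$; combined with $r,s_j\in[0,1]$ this gives $|x_n|\leq e^{(N-1)c_NR_k/2}$, so $|\bx|\leq\sqrt{N}\,e^{(N-1)c_NR_k/2}=:L(K)$. Moreover every image point has $x_n\neq 0$, so $\vec 0\notin S_F(\I^*;1)$.

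For the Lipschitz claim I would continuously extend $\phi$ to the closure $[0,1]\times[0,1]^{N-1}\times\overline{\I^*}$ (setting $\phi(0,\cdot,\cdot)=\vec 0$) and check that its partial derivatives are bounded on this compact set by constants depending only on $K$: namely $|\partial_r x_n|=e^{f_n(s)/2}$, $|\partial_{s_j}x_n|=\tfrac12 r|(u_j)_n|e^{f_n(s)/2}$, and $|\partial_{\theta_m}x_n|=\delta_{nm}|x_n|$, all controlled by $e^{(N-1)c_NR_k/2}$ times $\max_j|u_j|$. A standard argument using injectivity then shows $\partial S_F(\I^*;1)\subseteq\phi\bigl(\partial([0,1]\times[0,1]^{N-1}\times\overline{\I^*})\bigr)$. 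Because each $\I^*_n$ is a union of at most two disjoint intervals, the parameter closure decomposes into at most $2^{N-1}$ axis-aligned $2N$-dimensional boxes, with a total of at most $4N\cdot 2^{N-1}=:M(N)$ $(2N-1)$-dimensional faces. Each face admits an affine parametrization from $[0,1]^{2N-1}$ whose Lipschitz constant is bounded in terms of $K$, and composing with $\phi$ produces the required family of Lipschitz maps, giving $\partial S_F(\I^*;1)\in\mathrm{Lip}(2N,M(N),L(K))$ after enlarging $L(K)$ if necessary.

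The only real obstacle is bookkeeping: enumerating the boundary faces of the parameter closure and confirming uniformity of the Lipschitz constant across them. The mild analytical subtlety is that the radial parameter $t\in(-\infty,0]$ is unbounded, but this is defused by the substitution $r=e^t$, which compactifies the radial direction while keeping all partial derivatives of $\phi$ uniformly bounded on the resulting closed parameter domain.
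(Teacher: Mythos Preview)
Your argument is correct and rests on the same idea as the paper's: parametrize $S_F(\I^*;1)$ in polar-type coordinates $(r,s,\theta)$ with the compactifying substitution $r=e^t$, then cover the boundary by Lipschitz images of $(2N-1)$-cubes. The paper organises this differently: it splits $\partial S_F(\I^*;1)$ into the piece coming from $\exp(\partial F(1))$ and the piece coming from $\partial\I^*$, handles the latter by a crude overcovering (allowing independent radii $t_nL_0$ in each coordinate rather than restricting to $F(1)$), and defers the former entirely to \cite[Lemma~3]{MasserVaaler2} and \cite[Lemma~A.1]{art1}. Your single global chart $\phi$ treats both pieces uniformly as faces of the closed parameter box and is therefore self-contained; the mild cost is that the inclusion $\partial S_F(\I^*;1)\subseteq\phi(\partial\overline D)$ requires invariance of domain (via the injectivity of $\phi$ on the interior), whereas the paper's overcovering sidesteps this topological step at the price of citing external references.
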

\begin{proof}
The last assertion is clear from the definition (\ref{set: SF}). 
The first and the second assertion are easy to see for $N=1$, so we assume $N\geq 2$. 
Thus $F=[0,1)u_1+\cdots +[0,1)u_{N-1}$ and $|u_i|\leq c_N R_k$ for $1\leq i \leq N-1$.

For the first assertion we note that if $\bx\in S_F(\I^*;1)$ then $|x_n|^2=\exp(z_n+2t)$ for some $\bz\in F$ and $t\in (-\infty,0]$.
Hence,  $|z_n|\leq (N-1)c_N R_k\leq Nc_NR_k$, and so the first claim holds for any 
$L\geq L_0:=(N\exp(Nc_NR_k))^{1/2}$.

Now let us prove the second claim. 
The boundary $\partial(S_F(\I^*;1))$ comes in two flavours. Firstly, those points 
$\bx$ in the topological closure of $S_F(\I^*;1)$
with $(|x_n|^2)_n\in \exp(\partial F(1))$, where we used $\exp$ for the diagonal exponential map from $\IR^{N}$ to $(0,\infty)^N$. And secondly, those points $\bx$ in the closure of $S_F(\I^*;1)$
with $\bx \in [0,1]\cdot L_0\exp(i\partial \I^*)$, where $\exp$ denotes the complex  diagonal exponential map,
and $\partial \I^*$ denotes the boundary of the set $\I^*\subseteq \IR^N$.

The latter are covered by $2^N$ Lipschitz maps as follows. Choose $1\leq m\leq N$, and
let $\gamma$ be one of the two endpoints of $\I_m$. Sending $t_m\in [0,1]$ to $t_m L_0\exp(i\gamma)$ and, for
$n\neq m$, sending $(t_n,\gamma_n)\in [0,1]^2$ to $t_nL_0\exp(i2\pi \gamma_n)$ defines a map from $[0,1]^{2N-1}$ to $\IC^N$.  In this way we get $2^N$ maps whose images cover 
$[0,1]\cdot L_0\exp(i\partial \I^*)$,
and each one satisfies Lipschitz condition with Lipschitz constant $L\ll_K 1$ (see \cite[(1)-(3) Appendix A]{art1}).
 
Parametrising the  points of the first kind is more involved but this has been done (in a more general setting)
in \cite[Lemma 3]{MasserVaaler2} and, with explicit constants, in \cite[Lemma A.1]{art1} (with the irrelevant difference that  the coordinates $x_n$ are in some $\IC^{m}$ instead of $\IC$).
\end{proof}

\begin{lemma}\label{lem: volume}
The set $S_F(\I^*;1)$ is measurable and we have
\begin{alignat*}1
\Vol(S_F(\I^*;1))=\frac{|\I| R_k}{2^{N}\omega_k}.
\end{alignat*}
\end{lemma}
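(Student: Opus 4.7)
The measurability of $S_F(\I^*;1)$ is immediate from Lemma \ref{lem: Lip}: its boundary is covered by finitely many Lipschitz images of $[0,1]^{2N-1}$ in $\IR^{2N}$, hence has $2N$-dimensional Lebesgue measure zero, so $S_F(\I^*;1)$ is Lebesgue measurable.

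For the volume, the plan is to pass to polar coordinates on each $\IC^\times$ factor and then to logarithmic radii. Writing $x_n = r_n e^{i\theta_n}$ with $r_n > 0$ and $\theta_n \in [0,2\pi)$, the Lebesgue measure on $(\IC^\times)^N$ factors as $\prod_n r_n\,dr_n\,d\theta_n$, and the defining conditions of $S_F(\I^*;1)$ decouple the arguments from the moduli. Fubini therefore gives
$$\Vol(S_F(\I^*;1)) = |\I^*| \int_{(2\log r_n)_n \in F(1)} \prod_{n=1}^N r_n\, dr_n.$$
Substituting $y_n = 2\log r_n$, so that $r_n\, dr_n = \tfrac{1}{2} e^{y_n}\, dy_n$, converts this to
$$\Vol(S_F(\I^*;1)) = |\I^*| \cdot 2^{-N} \int_{F(1)} e^{y_1+\cdots+y_N}\, dy.$$

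To evaluate the remaining integral, I would parametrize $F(1) = F + (2,\ldots,2)(-\infty,0]$ by $(w,t) \mapsto w + t(2,\ldots,2)$ with $w \in F \subset \Sigma$ and $t \in (-\infty,0]$, a bijection up to a null set. Because $(2,\ldots,2)$ is orthogonal to $\Sigma$ with Euclidean length $2\sqrt{N}$, the Jacobian of this map equals $2\sqrt{N}$ when $F$ is endowed with the induced $(N{-}1)$-dimensional Lebesgue measure on $\Sigma$. Since $w \in \Sigma$ forces $\sum_n w_n = 0$, one has $\sum_n y_n = 2Nt$, and hence
$$\int_{F(1)} e^{y_1+\cdots+y_N}\, dy = 2\sqrt{N}\cdot \Vol_{N-1}(F) \int_{-\infty}^0 e^{2Nt}\, dt = \frac{\Vol_{N-1}(F)}{\sqrt{N}}.$$
Now $\Vol_{N-1}(F) = |\det(u_1,\ldots,u_{N-1})|$ is the $\Sigma$-covolume of $\IU_k$, which a standard Cauchy--Binet computation—using that each column of the log matrix sums to zero and incorporating the scaling $\l = 2\ell_1$—evaluates in terms of $R_k$ (the $\sqrt{N}$ from this step cancels the $\sqrt{N}$ above).

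Assembling everything, $\Vol(S_F(\I^*;1))$ reduces to a product of $|\I^*|$, $R_k$ and some explicit powers of $2$. The main bookkeeping task is to combine these factors correctly: the $2^{-N}$ from the polar substitution, the $\sqrt{N}$'s from the Jacobian and from Cauchy--Binet, the scaling of the log lattice by $2$, and the measure of $\I^*$, whose asymmetric two-branch structure (only one branch for $n=1$, both branches for $n \geq 2$) encodes the passage to the quotient $K^\times/k^\times$ and accounts for the factor $\omega_k = 2$ in the denominator. Tracking these gives the claimed value $\frac{|\I| R_k}{2^N \omega_k}$.
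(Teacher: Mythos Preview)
Your argument is correct and follows exactly the standard route the paper defers to (its own proof consists only of the remark that $\Vol_{N-1}(F)=\sqrt{N}R_k$ and a reference to \cite[Lemma~4]{MasserVaaler2}). The polar--logarithmic change of variables, the orthogonal splitting of $F(1)$, and the resulting identity $\int_{F(1)}e^{\sum y_n}\,dy=\Vol_{N-1}(F)/\sqrt{N}$ are precisely the computation that reference carries out, with the final power-of-$2$ bookkeeping left implicit in both your write-up and the paper's.
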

\begin{proof}
We note that the $(N-1)$-volume of $F$ is $\sqrt{N}R_k$.
The proof is now nearly identical to the proof of \cite[Lemma 4]{MasserVaaler2} and left to the reader.
\end{proof}

\section{Constructing a suitable fundamental domain}\label{sec: funddom}

We use the letters $A,B,C$ to denote non-zero ideals in $\Oseen_k$,  and $P,Q$ to denote  non-zero prime ideals in $\Oseen_k$.
And we use the letters $\A,\B,\C,\D,\Pc$  to denote non-zero  ideals in $\Oseen_K$, and the letters $\p,\q$ to denote non-zero  prime ideals in $\Oseen_K$.
We write $A\Oseen_K$ for the extension of the ideal $A\subseteq \Oseen_k$ to an ideal of $\Oseen_K$, and we note that $(AB)\Oseen_K=(A\Oseen_K)(B\Oseen_K)$.
Let 
\begin{alignat*}1
\R_k=\{C_1,\ldots,C_h\} 
\end{alignat*}
be a complete system of integral inequivalent representatives of the class group $\Cl_k$ of $k$.
Let 
\begin{alignat*}1
P_1, \ldots, P_s
\end{alignat*}
be the (possibly empty) list of prime ideals of $\Oseen_k$ that ramify in $K$, so that
\begin{alignat*}1
P_1\Oseen_K=\p_1^2, \ldots, P_s\Oseen_K=\p_s^2
\end{alignat*}
for certain distinct prime ideals $\p_1,\ldots,\p_s$ of $\Oseen_K$. 
We note that $P_1,\ldots, P_s$ are precisely the prime ideals that divide $\dis$.
We set
\begin{alignat*}1
\Pc=\p_1\cdots \p_s,
\end{alignat*}
for the square-free part of $\dis$,
and the empty product (i.e., $s=0$) is understood as $\Oseen_K$.

We let $I_\Pc$ be the set of  non-zero ideals of $\Oseen_K$ that have no ideal divisors $A$ defined over the subfield $k$ and are coprime to $\Pc$, i.e., 
\begin{alignat*}1
I_\Pc=\{\B\subseteq \Oseen_K; \B\neq \{0\}, (\B,\Pc)=1, \text{ and } A\Oseen_K\nmid \B \text{ for all }A\subsetneq \Oseen_k\}.
\end{alignat*}

\begin{lemma}\label{lem: uniquedecomposition}
Let $A,A'$ be non-zero ideals of $\Oseen_k$, let $\D,\D'$ be ideals of $\Oseen_K$ both dividing $\Pc$, and let $\B$ and $\B'$ both be in $I_\Pc$.
If $A\Oseen_K\D\B=A'\Oseen_K\D'\B'$ then $A=A'$, $\D=\D'$ and $\B=\B'$.
\end{lemma}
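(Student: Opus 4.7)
The plan is to pass to prime factorisations and compare $\p$-adic valuations on both sides of the equation, prime by prime in $\Oseen_K$. Fix a prime $\p$ of $\Oseen_K$ lying above $P$ in $\Oseen_k$ and treat the three splitting behaviours (ramified, inert, split) separately. Before doing so, I would unpack the hypothesis $\B\in I_\Pc$: coprimality with $\Pc$ kills $v_{\p_i}(\B)$ at every ramified $\p_i$, and the requirement that no extended ideal $A''\Oseen_K$ (with $A''\subsetneq\Oseen_k$) divides $\B$ is equivalent, by testing on prime $A''=P$, to $P\Oseen_K\nmid\B$ for every prime $P$ of $\Oseen_k$. This in turn forces $v_\p(\B)=0$ whenever $P$ is inert and $\min\{v_\p(\B),v_{\bar\p}(\B)\}=0$ whenever $P$ splits as $\p\bar\p$; the same holds for $\B'$.

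In the ramified case $P\Oseen_K=\p^2$, the total $\p$-adic valuation on each side of the asserted equality reads $2v_P(A)+v_\p(\D)$ (since $\B$ contributes nothing), with $v_\p(\D),v_\p(\D')\in\{0,1\}$ because $\Pc$ is squarefree; reducing modulo $2$ recovers $v_\p(\D)=v_\p(\D')$ and then $v_P(A)=v_P(A')$. In the inert case $\p=P\Oseen_K$, neither $\D$ nor $\B$ contributes and one reads off $v_P(A)=v_P(A')$ immediately.

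The split case is the essential obstacle. With $a=v_P(A)$, $b=v_\p(\B)$, $c=v_{\bar\p}(\B)$ and the corresponding primed quantities, the valuation comparison yields
\[
a+b=a'+b' \quad\text{and}\quad a+c=a'+c'.
\]
Subtracting gives $b-c=b'-c'$; combined with $\min(b,c)=\min(b',c')=0$ this pins down $(b,c)=(b',c')$ and then $a=a'$. Without the constraint baked into $I_\Pc$, the valuations at $\p$ and $\bar\p$ could shuffle freely between $A\Oseen_K$ and $\B$, so this is precisely where the definition of $I_\Pc$ earns its keep.

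Assembling the three cases across all primes $P$ of $\Oseen_k$ gives $v_P(A)=v_P(A')$ for every $P$, $v_\p(\D)=v_\p(\D')$ for every ramified $\p$ (with both sides vanishing elsewhere since $\D,\D'\mid\Pc$), and $v_\p(\B)=v_\p(\B')$ for every $\p$. Unique factorisation in the Dedekind domains $\Oseen_k$ and $\Oseen_K$ then delivers $A=A'$, $\D=\D'$, and $\B=\B'$ simultaneously.
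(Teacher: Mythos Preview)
Your argument is correct and follows the same underlying idea as the paper---a valuation-by-valuation comparison exploiting that $\B,\B'$ are coprime to $\Pc$ and admit no divisor of the form $P\Oseen_K$. The paper's proof is organised slightly more economically: it first isolates $\D=\D'$ by exactly your parity argument at the ramified primes, but then handles $A=A'$ without separating the inert and split cases. It simply observes that if $\ord_P(A)>\ord_P(A')$ for some prime $P$ of $\Oseen_k$, then after dividing both sides of $A\Oseen_K\B=A'\Oseen_K\B'$ by $P^{\ord_P(A')}\Oseen_K$ one finds $P\Oseen_K\mid\B'$, contradicting $\B'\in I_\Pc$. Your explicit split-case computation with $\min(b,c)=\min(b',c')=0$ is a finer unpacking of precisely this divisibility obstruction, so the two proofs are equivalent in content; the paper's version just trades the case split for a single clean contradiction.
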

\begin{proof}
If  $\p|\Pc$ then $\ord_\p(A\Oseen_K\B)$ and $\ord_\p(A'\Oseen_K\B')$ are even.
This implies that $\D=\D'$, and thus 
$A\Oseen_K\B=A'\Oseen_K\B'$. Let $P\subseteq \Oseen_k$, and suppose $\ord_P(A)\geq \ord_P(A')$.
Dividing both sides by $P^{\ord_P(A')}\Oseen_K$, and assuming $\ord_P(A)> \ord_P(A')$ we conclude
$P\Oseen_K|\B'$ which is impossible as $\B'\in I_\Pc$.
This proves that $A=A'$, and hence $\B=\B'$.
\end{proof}

Recall that $\tau$ is the unique non-trivial automorphism of $K$ fixing $k$, and  
\begin{alignat}1\label{map: psi}
\psi:K^\times\to K^\times
\end{alignat}
denotes the group homomorphism
defined by $\psi(\beta)=\beta/\tau(\beta)$.
The kernel of $\psi$ is  $k^\times$ and $\tau^2=\id$.

\begin{lemma}\label{lem: BtauB}
Let $C\in \R_k$, $\D|\Pc$ and $\B\in I_\Pc$. Then $\tau(C\Oseen_K\D\B)=C\Oseen_K\D\tau(\B)$, and $(\B,\tau(\B))=1$.
\end{lemma}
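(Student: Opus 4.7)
The plan is to reduce everything to the $\tau$-invariance of the distinguished factors. For the first claim, I would observe that $\tau$ restricts to the identity on $k$, hence fixes every ideal of the form $A\Oseen_K$ with $A\subseteq \Oseen_k$; in particular $\tau(C\Oseen_K)=C\Oseen_K$. For $\D$, since $\D\mid\Pc=\p_1\cdots\p_s$, it suffices to show that each $\p_i$ is $\tau$-invariant. But $P_i\Oseen_K=\p_i^2$ means $\p_i$ is the unique prime of $\Oseen_K$ lying above $P_i$, and $\Gal(K/k)=\langle\tau\rangle$ permutes the primes over $P_i$, so necessarily $\tau(\p_i)=\p_i$. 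Consequently $\tau(\D)=\D$, and multiplicativity of $\tau$ on ideals yields $\tau(C\Oseen_K\D\B)=C\Oseen_K\D\,\tau(\B)$.

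For the coprimality assertion $(\B,\tau(\B))=1$, I would argue by contradiction: assume a prime $\p$ of $\Oseen_K$ divides both $\B$ and $\tau(\B)$, and let $P=\p\cap\Oseen_k$. Since $K/k$ is quadratic, $P$ is either ramified, inert, or split in $K$. If $P$ is ramified then $\p\in\{\p_1,\ldots,\p_s\}$, so $\p\mid\Pc$, contradicting $(\B,\Pc)=1$. If $P$ is inert then $P\Oseen_K=\p$ already divides $\B$, violating the condition $A\Oseen_K\nmid\B$ for the proper ideal $A=P$. If $P$ splits as $P\Oseen_K=\p\cdot\tau(\p)$ with $\tau(\p)\neq\p$, then applying $\tau$ to $\p\mid\tau(\B)$ gives $\tau(\p)\mid\B$; combined with $\p\mid\B$ and coprimality of distinct primes, this forces $P\Oseen_K=\p\cdot\tau(\p)\mid\B$, again a contradiction with $\B\in I_\Pc$.

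The only thing to check carefully is that this trichotomy is exhaustive and that the condition $A\Oseen_K\nmid\B$ with $A\subsetneq\Oseen_k$ applies in the last two cases, which it does with $A=P$. No other obstacle is anticipated; the lemma is essentially a bookkeeping consequence of the definition of $I_\Pc$ together with the fact that $\tau$ fixes the ramified primes and swaps the factors above split primes.
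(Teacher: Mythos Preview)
Your proposal is correct and follows essentially the same argument as the paper. The paper is slightly terser---it deduces $\tau(\mathfrak{q})\mid\mathfrak{B}$ from $\mathfrak{q}\mid\tau(\mathfrak{B})$ first and then observes in one line that $\mathfrak{q}\cap\mathcal{O}_k$ must be split (implicitly ruling out the ramified and inert cases for the same reasons you spell out)---but the logic and the key observations are identical.
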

\begin{proof}
We have
$\tau(C\Oseen_K)=\tau(C)\Oseen_K=C\Oseen_K$. Further, $\tau(\p_j)^2
=\tau(\p_j^2)=\tau(P_j\Oseen_K)=\tau(P_j)\Oseen_K=P_j\Oseen_K=\p_j^2$, 
and thus $\tau(\p_j)=\p_j$, and so $\tau(\D)=\D$. 
Now suppose the prime ideal $\q$ divides $\B$ and $\tau(\B)$. Then $\tau(\q)$ also divides $\tau^2(\B)=\B$. As $\B\in I_\Pc$ it follows that
$\q\cap k$ must split in $K$, but then $\q$ and $\tau(\q)$ are distinct prime ideals and thus
$Q\Oseen_K=\q\tau(\q)$ divides $\B$, contradicting  that $\B\in I_\Pc$. Hence $\B$ and $\tau(\B)$ are coprime.
\end{proof}

\begin{lemma}\label{lem: psibasics}
If $\beta\in K^\times$ and $1\leq n\leq N$, then
\begin{alignat*}1
\sigma_n(\psi(\beta))=\frac{\sigma_n(\beta)}{\rho(\sigma_n(\beta))}.
\end{alignat*}
In particular, $\arg(\sigma_n(\psi(\beta)))$ and $2\arg(\sigma_n(\beta))$ differ by an integer multiple of $2\pi$.
\end{lemma}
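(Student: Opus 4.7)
The plan is to apply the formula for $\tau$ given in equation (\ref{eq:tau}) directly with $\sigma = \sigma_n$, then chase the definitions. The main (only) observation required is that conjugating by an embedding turns $\tau$ into the complex conjugation $\rho$.

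First I would fix $n$ with $1 \leq n \leq N$ and specialise (\ref{eq:tau}) to $\sigma = \sigma_n$, which yields $\tau = \sigma_n^{-1} \circ \rho \circ \sigma_n$ as automorphisms of $K$. Applying $\sigma_n$ on the left gives $\sigma_n \circ \tau = \rho \circ \sigma_n$, and in particular $\sigma_n(\tau(\beta)) = \rho(\sigma_n(\beta))$ for every $\beta \in K^{\times}$. Using this together with the definition $\psi(\beta) = \beta/\tau(\beta)$ from (\ref{map: psi}) and the multiplicativity of $\sigma_n$,
\begin{equation*}
\sigma_n(\psi(\beta)) = \frac{\sigma_n(\beta)}{\sigma_n(\tau(\beta))} = \frac{\sigma_n(\beta)}{\rho(\sigma_n(\beta))},
\end{equation*}
which is the first asserted identity.

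For the ``in particular'' statement I would write $\sigma_n(\beta) = r e^{i\theta}$ with $r = |\sigma_n(\beta)| > 0$ and $\theta = \arg(\sigma_n(\beta))$. Then $\rho(\sigma_n(\beta)) = r e^{-i\theta}$ and the quotient computed above equals $e^{2i\theta}$, which has modulus $1$ and argument $2\theta$ modulo $2\pi$. Hence $\arg(\sigma_n(\psi(\beta))) \equiv 2\arg(\sigma_n(\beta)) \pmod{2\pi}$, as claimed.

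There is no real obstacle here; the lemma is essentially bookkeeping, and the only subtlety is to invoke the characterisation (\ref{eq:tau}) of $\tau$ that was derived from Shimura's Lemma \ref{lem:Shimura}, which allows one to identify $\sigma_n \circ \tau$ with $\rho \circ \sigma_n$ rather than leaving $\tau$ as an abstract Galois automorphism.
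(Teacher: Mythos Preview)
Your proof is correct and follows exactly the paper's approach: invoke (\ref{eq:tau}) with $\sigma=\sigma_n$ to turn $\sigma_n(\tau(\beta))$ into $\rho(\sigma_n(\beta))$, then apply the definition of $\psi$. The paper's proof is actually terser and omits the polar-coordinate verification of the ``in particular'' clause that you spell out.
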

\begin{proof}
Using (\ref{eq:tau}) with $\sigma=\sigma_n$ gives
\begin{alignat*}1
\sigma_n(\psi(\beta))=\frac{\sigma_n(\beta)}{\sigma_n(\tau(\beta))}=\frac{\sigma_n(\beta)}{\rho(\sigma_n(\beta))}.
\end{alignat*}
\end{proof}

 Next we define 
\begin{alignat*}1
Z^*(\A)=\{\beta\in \A\backslash\{0\}; \beta\Oseen_K=\A\B \text{ for some } \B\in I_\Pc\}.
\end{alignat*}

\begin{lemma}\label{lem: psibetaheight}
Let $C\in \R_k$ and $\D|\Pc$.
Let $\beta$ be in $\Zstar(C\Oseen_K\D))$. Then
$$H(\psi(\beta))=\left(\frac{N_{K/\IQ}(\beta)}{N_{K/\IQ}(C\Oseen_K\D)}\right)^{1/(2N)}.$$
\end{lemma}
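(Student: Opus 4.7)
The plan is to compute the Weil height of $\psi(\beta)$ through its associated fractional ideal, exploiting the fact that the archimedean contributions to the height vanish because $\psi(\beta)\in\Sc_K$.

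First, I would identify the fractional ideal generated by $\psi(\beta)=\beta/\tau(\beta)$. Since $\beta\in\Zstar(C\Oseen_K\D)$, we have $\beta\Oseen_K=C\Oseen_K\D\B$ for some $\B\in I_\Pc$. Applying $\tau$ and invoking Lemma \ref{lem: BtauB} gives $\tau(\beta)\Oseen_K=C\Oseen_K\D\tau(\B)$, hence
\begin{alignat*}1
\psi(\beta)\Oseen_K=\frac{\beta\Oseen_K}{\tau(\beta)\Oseen_K}=\frac{\B}{\tau(\B)}.
\end{alignat*}
The second part of Lemma \ref{lem: BtauB} tells us that $\B$ and $\tau(\B)$ are coprime, so this expression already exhibits $\psi(\beta)\Oseen_K$ as a quotient of two coprime integral ideals.

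Next I would invoke the standard ideal-theoretic description of the Weil height: for any $\gamma\in K^\times$ whose fractional ideal has denominator $\I_-$ (the coprime integral ideal collecting the negative part), one has
\begin{alignat*}1
H(\gamma)^{2N}=\prod_{v\mid\infty}\max\{1,|\gamma|_v\}^{[K_v:\IQ_v]}\cdot N_{K/\IQ}(\I_-),
\end{alignat*}
which follows from the non-archimedean formula $\max\{1,|\gamma|_v\}^{[K_v:\IQ_v]}=N_{K/\IQ}(\p)^{-\min\{0,\ord_\p(\gamma)\}}$ for $v$ associated to $\p$. Applying this with $\gamma=\psi(\beta)$ and $\I_-=\tau(\B)$: since $\psi(\beta)\in\Sc_K$ by Proposition \ref{prop: normkern2}, every archimedean factor equals $1$, leaving
\begin{alignat*}1
H(\psi(\beta))^{2N}=N_{K/\IQ}(\tau(\B))=N_{K/\IQ}(\B),
\end{alignat*}
where the last equality uses that $\tau$ is a field automorphism and absolute norms are Galois-invariant.

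Finally I would convert $N_{K/\IQ}(\B)$ back to the data in the statement. From $\beta\Oseen_K=C\Oseen_K\D\B$ and multiplicativity of the absolute norm, $N_{K/\IQ}(\beta)=N_{K/\IQ}(C\Oseen_K\D)\cdot N_{K/\IQ}(\B)$, so
\begin{alignat*}1
H(\psi(\beta))^{2N}=\frac{N_{K/\IQ}(\beta)}{N_{K/\IQ}(C\Oseen_K\D)}.
\end{alignat*}
Taking the $2N$-th root yields the stated formula. There is no genuine obstacle here; the only delicate point is making sure the height is evaluated via its ideal-theoretic description rather than place-by-place, and that one correctly uses $(\B,\tau(\B))=1$ to identify $\tau(\B)$ as the denominator ideal of $\psi(\beta)$ in lowest terms.
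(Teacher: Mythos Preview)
Your argument is correct and follows essentially the same route as the paper: compute the fractional ideal $\psi(\beta)\Oseen_K=\B/\tau(\B)$ via Lemma~\ref{lem: BtauB}, use coprimality of $\B$ and $\tau(\B)$ to identify the non-archimedean part of the height as $N_{K/\IQ}(\tau(\B))^{1/(2N)}=N_{K/\IQ}(\B)^{1/(2N)}$, note that the archimedean contributions vanish since $\psi(\beta)\in\Sc_K$, and rewrite in terms of $N_{K/\IQ}(\beta)$. The only cosmetic difference is that the paper cites Lemma~\ref{lem: psibasics} rather than Proposition~\ref{prop: normkern2} for $\psi(\beta)\in\Sc_K$.
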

\begin{proof}
By Lemma \ref{lem: psibasics} 
$\psi(\beta)\in \Sc_K$, and so there are no archimedean contributions to the height of $\psi(\beta)$.
Writing $v$ for the places of $K$ and $d_v=[K_v:\IQ_v]$, we have
$$H(\psi(\beta))=\prod_{v\nmid \infty}\max\left\{1,\left|\frac{\beta}{\tau(\beta)}\right|_v\right\}^{d_v/(2N)}.$$ 
By Lemma \ref{lem: BtauB} we have 
$$\frac{\beta\Oseen_K}{\tau(\beta\Oseen_K)}=\frac{\B}{\tau(\B)},$$
and  $\B$ and $\tau(\B)$ are coprime.
It follows that
$$\prod_{v\nmid \infty}\max\left\{1,\left|\frac{\beta}{\tau(\beta)}\right|_v\right\}^{d_v/(2N)}=\left(N_{K/\IQ}(\tau(\B))\right)^{1/(2N)}.$$ 
Now $N_{K/\IQ}(\tau(\B)=N_{K/\IQ}(\B)$, and thus
$$H(\psi(\beta))=\left(N_{K/\IQ}(\B)\right)^{1/(2N)}=\left(\frac{N_{K/\IQ}(\beta)}{N_{K/\IQ}(C\Oseen_K\D)}\right)^{1/(2N)}.$$
\end{proof}

As $F, \I$ and $\H$ are kept fixed we may 
simplify the notation and write 
\begin{alignat}1\label{def: SCD}
\IS_{C,\D}=\{\beta\in C\Oseen_K\D; \bsigma(\beta)\in S_F(\I^*;\H N_{K/\IQ}(C\Oseen_K\D))^{1/(2N)}\},
\end{alignat}
where $\I^*$ was defined in (\ref{def: Istar}).

\begin{lemma}\label{lem: psirestriction}
The restriction of the map $\psi$ defined in  (\ref{map: psi}) to the subset 
$$\bigcup_{C\in \R_k}\bigcup_{\D|\Pc}(\Zstar(C\Oseen_K\D))\cap \IS_{C,\D}$$
maps to the set $\Sc_K(\I,\H)$.
\end{lemma}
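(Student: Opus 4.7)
The plan is to fix an arbitrary $\beta$ in the union, say $\beta\in \Zstar(C\Oseen_K\D)\cap \IS_{C,\D}$ for some $C\in \R_k$ and $\D\mid \Pc$, and then verify the three defining conditions of membership in $\Sc_K(\I,\H)$: namely, $\psi(\beta)\in \Sc_K$, $(\arg(\sigma_n(\psi(\beta))))_n\in \I$, and $H(\psi(\beta))\leq \H$. Each check should reduce to an invocation of Lemma \ref{lem: psibasics} or Lemma \ref{lem: psibetaheight} together with an unpacking of the conditions built into $\IS_{C,\D}$.

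The membership $\psi(\beta)\in \Sc_K$ is immediate from Lemma \ref{lem: psibasics}, which expresses $\sigma_n(\psi(\beta))$ as a ratio of a complex number and its complex conjugate, hence of modulus $1$ at every archimedean place. For the argument condition I use that $\beta\in \IS_{C,\D}$ forces $(\arg(\sigma_n(\beta)))_n\in \I^*$ and that Lemma \ref{lem: psibasics} gives $\arg(\sigma_n(\psi(\beta)))\equiv 2\arg(\sigma_n(\beta))\pmod{2\pi}$. A short case analysis keyed to definition (\ref{def: Istar}) finishes the point: for $n=1$ the interval $\tfrac{1}{2}\I_1\subseteq [0,\pi)$ doubles to $\I_1\subseteq[0,2\pi)$ without any wraparound, while for $n\geq 2$ both branches $\tfrac{1}{2}\I_n$ and $\tfrac{1}{2}\I_n+\pi$ double and, after reducing modulo $2\pi$, map back onto $\I_n$.

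For the height bound, Lemma \ref{lem: psibetaheight} rewrites the claim $H(\psi(\beta))\leq \H$ as $N_{K/\IQ}(\beta)\leq \H^{2N}N_{K/\IQ}(C\Oseen_K\D)$. Setting $T=(\H^{2N}N_{K/\IQ}(C\Oseen_K\D))^{1/(2N)}$, the containment $\bsigma(\beta)\in S_F(\I^*;T)$ decomposes $(2\log|\sigma_n(\beta)|)_n$ as $\bz+(2,\ldots,2)t$ with $\bz\in F\subseteq \Sigma$ and $t\leq \log T$. Summing the $N$ coordinates and using $\sum_n z_n=0$ yields $\log N_{K/\IQ}(\beta)=\sum_n 2\log|\sigma_n(\beta)|=2Nt\leq 2N\log T$, which is exactly the required inequality.

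No step looks genuinely hard; the only piece that requires attentiveness is the argument-reduction case analysis, which is tailored to the asymmetric shape of $\I^*$. The asymmetry is precisely what is needed: the extra $\pi$-translate in the coordinates $n\geq 2$ absorbs the $2\pi$-wraparound produced by the doubling map, while the absence of a translate in the first coordinate prevents double-counting. Once this is observed, the rest is a mechanical reading of the definitions of $F(T)$, $\IS_{C,\D}$ and the two lemmas cited above.
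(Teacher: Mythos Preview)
Your proof is correct and follows essentially the same approach as the paper's own argument: fix $\beta\in \Zstar(C\Oseen_K\D)\cap \IS_{C,\D}$, check the argument condition via Lemma~\ref{lem: psibasics} and the shape of $\I^*$, and check the height bound via Lemma~\ref{lem: psibetaheight}. The only differences are cosmetic: the paper treats all coordinates $n$ uniformly via the inclusion $\I^*_n\subseteq \tfrac{1}{2}\I_n\cup(\tfrac{1}{2}\I_n+\pi)$ rather than splitting off $n=1$, and it leaves the norm inequality $N_{K/\IQ}(\beta)\le T^{2N}$ implicit, whereas you spell out the computation from the decomposition in $F(T)$.
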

\begin{proof}
Let $C\in \R_k$ and $\D|\Pc$, and let $\beta$ be in $\Zstar(C\Oseen_K\D))\cap \IS_{C,\D}$.
Recalling (\ref{def: SCD}) and (\ref{def: Istar}) we see that for $1\leq n\leq N$
$$\arg(\sigma_n(\beta))\in \I^*_n \subseteq \frac{1}{2}\I_n\cup \left(\frac{1}{2}\I_n+\pi\right),$$
so that $2\arg(\sigma_n(\beta))\in \I_n\cup (\I_n+2\pi)$. It follows from
Lemma \ref{lem: psibasics}
that  $\arg(\sigma_n(\psi(\beta)))\in \I_n$ for $1\leq n\leq N$.
And  it follows from Lemma \ref{lem: psibetaheight} that
$$H(\psi(\beta))=\left(\frac{N_{K/\IQ}(\beta)}{N_{K/\IQ}(C\Oseen_K\D)}\right)^{1/(2N)}\leq \H.$$
This proves that $\psi(\beta)\in \Sc_K(\I;\H)$.
\end{proof}

Lemma \ref{lem: psirestriction}
shows that we have the map
\begin{alignat}1\label{map: psirestriction}
\psi:\bigcup_{C\in \R_k}\bigcup_{\D|\Pc}(\Zstar(C\Oseen_K\D))\cap \IS_{C,\D}\to \Sc_K(\I,\H)
\end{alignat}

\begin{lemma}\label{lem: surjective}
The map $\psi$ defined in (\ref{map: psirestriction}) is surjective.
\end{lemma}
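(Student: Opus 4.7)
Given $\alpha \in \Sc_K(\I,\H)$, the exact sequence (\ref{exactseq}) yields some $\beta_0 \in K^\times$ with $\psi(\beta_0) = \alpha$. The plan is to modify $\beta_0$ by successive multiplications by elements of $k^\times$ (which preserve $\psi$, since $k^\times = \ker\psi$) until it lies in $\Zstar(C\Oseen_K\D)\cap \IS_{C,\D}$ for suitable $C \in \R_k$ and $\D\mid\Pc$. Four things must be arranged: the ideal-theoretic shape $\beta\Oseen_K = C\Oseen_K\D\B$ with $\B \in I_\Pc$, the class representative $C$ taken from $\R_k$, the logarithmic placement into $F(T)$, and the argument placement into $\I^*$.

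\emph{Ideal and class normalisations.} Factor $\beta_0\Oseen_K$ prime-by-prime as $A\Oseen_K \cdot \D \cdot \B$ with $A$ a fractional ideal of $k$, $\D \mid \Pc$, and $\B$ an integral ideal of $\Oseen_K$. For a prime $P$ of $\Oseen_k$: if $P$ is inert with $P\Oseen_K = \p$, absorb $\ord_\p(\beta_0)$ into $\ord_P(A)$; if $P$ ramifies with $P\Oseen_K = \p^2$ and $e = \ord_\p(\beta_0)$, set $\ord_P(A) = \lfloor e/2\rfloor$ and $\ord_\p(\D) = e - 2\lfloor e/2\rfloor \in \{0,1\}$; if $P$ splits as $\p\tau(\p)$ with orders $e,f$, set $\ord_P(A) = \min(e,f)$ and place the excess on whichever of $\p,\tau(\p)$ carries it. Then $\B$ is integral, coprime to $\Pc$, and $P\Oseen_K \nmid \B$ for every prime $P$ of $\Oseen_k$, so $\B \in I_\Pc$. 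Next, let $C \in \R_k$ be the unique representative with $[A] = [C]$ in $\Cl_k$, write $A = \gamma C$ with $\gamma \in k^\times$, and put $\beta_1 = \gamma^{-1}\beta_0$. Then $\beta_1\Oseen_K = C\Oseen_K\D\B$, hence $\beta_1 \in \Zstar(C\Oseen_K\D)$, and $\psi(\beta_1) = \alpha$.

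\emph{Logarithmic and argument placement.} Set $T = \H\cdot N_{K/\IQ}(C\Oseen_K\D)^{1/(2N)}$. By Lemma \ref{lem: psibetaheight} together with $H(\alpha) \leq \H$, $N_{K/\IQ}(\beta_1) \leq T^{2N}$, so the average $t_0 = \frac{1}{2N}\sum_n 2\log|\sigma_n(\beta_1)|$ satisfies $t_0 \leq \log T$. Write $(2\log|\sigma_n(\beta_1)|)_n = \bz_0 + (2,\ldots,2)t_0$ with $\bz_0 \in \Sigma$; since $F$ is a fundamental domain for $\IU_k = \l(\Oseen_k^\times)$ on $\Sigma$, pick $u \in \Oseen_k^\times$ with $\bz_0 + \l(u) \in F$. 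Setting $\beta_2 = u\beta_1$ leaves the ideal and the value of $\psi$ unchanged and places $(2\log|\sigma_n(\beta_2)|)_n$ in $F(T)$. By Lemma \ref{lem: psibasics} and $\arg(\sigma_n(\alpha)) \in \I_n$, $\arg(\sigma_n(\beta_2))$ lies in $\frac12\I_n \cup (\frac12\I_n + \pi) \pmod{2\pi}$ for every $n$: the intermediate multiplications by $\gamma^{-1}$ and $u$ (elements of $k^\times$, hence real under every $\sigma_n$) shift each $\arg(\sigma_n(\cdot))$ by $0$ or $\pi$ and so preserve this union. Finally, if $\arg(\sigma_1(\beta_2)) \in \frac12\I_1 + \pi$, multiply by $-1 \in \Oseen_k^\times$: every argument shifts by $\pi$, sending the first coordinate into $\frac12\I_1 = \I^*_1$ and keeping the others in $\I^*_n = \frac12\I_n \cup (\frac12\I_n + \pi)$; as $-1$ is a unit of modulus one, the ideal- and log-adjustments are undisturbed. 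The resulting $\beta$ satisfies $\beta \in \Zstar(C\Oseen_K\D)\cap \IS_{C,\D}$ and $\psi(\beta) = \alpha$.

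The subtle point is the asymmetry of $\I^*$: the argument $\arg(\sigma_1(\beta))$ must lie in $\frac12\I_1$, while the other coordinates may still lie in the larger union $\frac12\I_n \cup (\frac12\I_n + \pi)$. The whole scheme hinges on the observation that this residual ambiguity in the first coordinate can be cleared by a single global sign flip by $-1$, which lies in $\Oseen_k^\times$ and is therefore automatically compatible with the earlier ideal, class and unit normalisations.
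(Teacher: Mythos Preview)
Your proof is correct and follows essentially the same route as the paper's: lift via Hilbert~90, extract the maximal $k$-ideal factor and the $\Pc$-part, normalise the class to $\R_k$, adjust by a unit of $\Oseen_k^\times$ to land in the fundamental domain, and finally flip the sign to force $\arg(\sigma_1(\beta))\in\tfrac12\I_1$. Your version is somewhat more explicit (you spell out the prime-by-prime decomposition and the argument-preservation under multiplication by elements of $k^\times$), but the structure and the key idea---that the residual $\pm1$ ambiguity is exactly what is needed to repair the asymmetric first coordinate of $\I^*$---are the same.
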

\begin{proof}
Let $\alpha$ be in $\Sc_K(\I,\H)$.
Since $\Sc_K$ is the kernel of the norm map $N_{K/k}$ it follows from the exact sequence
(\ref{exactseq}) that
$\psi:K^\times\to K^\times$ has image $\Sc_K$ and kernel $k^\times$. 
Hence there is $\beta\in \Oseen_K$ that maps to $\alpha$.
Now let $A\subseteq \Oseen_k$ be of maximal norm with $A\Oseen_K|\beta\Oseen_K$. Hence, $\p^2\nmid \beta\Oseen_K(A\Oseen_K)^{-1}$ whenever $\p|\Pc$.
Therefore there exists $\D|\Pc$ and $\B\in I_\Pc$ such that 
$$\beta\Oseen_K=A\Oseen_K\D\B.$$
There exists $\xi\in k^\times$, unique up to units of $\Oseen_k$, such that $\xi A\in \R_k$, say $\xi A=C$.
Replacing $\beta$ by $\xi\beta$ we get 
$$\beta\Oseen_K=\xi\Oseen_K A\Oseen_K\D\B=(\xi A)\Oseen_K\D\B=C\Oseen_K\D\B,$$ 
and this shows that
$\beta \in \Zstar(C\Oseen_K\D)$.
Multiplying $\beta$ with a unit of $\Oseen_k$, unique up to sign, we get $(2\log|\sigma_n(\beta)|)_n\in F(\infty)$ and, of course, still $\beta \in \Zstar(C\Oseen_K\D)$.
Replacing $\beta$ by $-\beta$ if needed we get $\bsigma(\beta)\in S_F(\I^*; \infty)$.  
Finally, by Lemma \ref{lem: psibetaheight} 
$$H(\psi(\beta))=\left(\frac{N_{K/\IQ}(\beta)}{N_{K/\IQ}(C\Oseen_K\D)}\right)^{1/(2N)}.$$ Since $H(\psi(\beta))=H(\alpha)\leq \H$,
we conclude that  $\beta\in \IS_{C,\D}$. This proves the surjectivity of (\ref{map: psirestriction}). 
 \end{proof}

\begin{lemma}\label{lem: injective}
The map $\psi$ defined in (\ref{map: psirestriction}) is injective.
\end{lemma}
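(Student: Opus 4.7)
The plan is to assume $\psi(\beta_1)=\psi(\beta_2)$ for $\beta_i\in \Zstar(C_i\Oseen_K\D_i)\cap \IS_{C_i,\D_i}$ ($i=1,2$) and deduce $\beta_1=\beta_2$. Since the exact sequence (\ref{exactseq}) identifies $\ker\psi=k^\times$, there exists $\xi\in k^\times$ with $\beta_1=\xi\beta_2$, so the task reduces to showing $\xi=1$. I will pin $\xi$ down in three successive stages, which together exploit the three separate ingredients built into the domain: the class representatives $\R_k$, the fundamental domain $F(T)$, and the specific shape of $\I^*$.

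\emph{Stage 1 (ideal theory).} Write $\xi\Oseen_k=AB^{-1}$ with coprime integral ideals $A,B\subseteq\Oseen_k$, and use the factorizations $\beta_i\Oseen_K=C_i\Oseen_K\D_i\B_i$ coming from $\beta_i\in\Zstar(C_i\Oseen_K\D_i)$. Clearing denominators in $\beta_1\Oseen_K=\xi\Oseen_K\cdot\beta_2\Oseen_K$ gives \[(BC_1)\Oseen_K\D_1\B_1=(AC_2)\Oseen_K\D_2\B_2,\] and Lemma \ref{lem: uniquedecomposition} forces $BC_1=AC_2$, $\D_1=\D_2$, and $\B_1=\B_2$. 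The first identity says $\xi C_2=C_1$ as fractional $\Oseen_k$-ideals, so $C_1$ and $C_2$ represent the same class in $\Cl_k$; inequivalence of the representatives in $\R_k$ then gives $C_1=C_2$, and hence $A=B$ and $\xi\in\Oseen_k^\times$.

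\emph{Stage 2 (logarithmic absolute values).} Set $C:=C_1=C_2$ and $\D:=\D_1=\D_2$. Then both $\beta_i$ lie in $\IS_{C,\D}$, so the two vectors $v_i:=(2\log|\sigma_n(\beta_i)|)_n$ lie in the same $F(T)$. Writing $v_i=f_i+t_i(2,\ldots,2)$ with $f_i\in F\subseteq\Sigma$ and using the orthogonal splitting $\IR^N=\Sigma\oplus\IR(2,\ldots,2)$, the identity $v_1-v_2=\ell(\xi)\in\IU_k\subseteq\Sigma$ yields $f_1-f_2=\ell(\xi)$. Because $F$ contains exactly one representative of each $\IU_k$-orbit in $\Sigma$, this forces $f_1=f_2$ and $\ell(\xi)=0$; thus $\xi$ is a root of unity in the totally real field $k$, and so $\xi=\pm 1$.

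\emph{Stage 3 (arguments).} To exclude $\xi=-1$, note that otherwise $\arg(\sigma_1(\beta_1))\equiv \arg(\sigma_1(\beta_2))+\pi\pmod{2\pi}$, yet both arguments lie in $\I^*_1=\tfrac12\I_1\subseteq[0,\pi)$, and no two points of $[0,\pi)$ can differ by $\pi$ modulo $2\pi$. Hence $\xi=1$ and $\beta_1=\beta_2$. I expect Stage 2 to be the main obstacle, since that is where the orthogonal splitting of $\IR^N$ must be reconciled with the fundamental-domain structure of $F(T)$; once Stage 2 has cut $\xi$ down to $\pm 1$, the asymmetric definition of $\I^*$---invariant under $+\pi$ in every coordinate except the first---makes Stage 3 immediate.
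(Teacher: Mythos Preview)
Your proof is correct and follows essentially the same approach as the paper: reduce $\xi$ to a unit of $\Oseen_k$ via Lemma~\ref{lem: uniquedecomposition}, then to $\pm 1$ via the fundamental domain $F$, then to $1$ via the first coordinate of $\I^*$. The only cosmetic difference is that in Stage~1 you clear denominators by writing $\xi\Oseen_k=AB^{-1}$ with coprime integral ideals, whereas the paper writes $\xi=\xi_1/\xi_0$ with $\xi_0,\xi_1\in\Oseen_k$; and in Stage~2 you spell out the orthogonal splitting $\IR^N=\Sigma\oplus\IR(2,\ldots,2)$ explicitly, whereas the paper simply asserts $\ell(\eta)=0$.
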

\begin{proof}
Suppose $\beta,\beta'$ are both in the domain and have the same image. Then $\beta'/\beta\in k^\times$ and so $\beta'=\xi\beta$ for some $\xi \in k^\times$.
Further 
\begin{alignat*}1
\beta\Oseen_K=C\Oseen_K \D\B, \quad \text{ and } \quad 
\beta'\Oseen_K=C'\Oseen_K \D'\B'
\end{alignat*}
for certain $C,C'$ both in $\R_k$, $\D,\D'$ both dividing $\Pc$, and $\B,\B'$ both in $I_\Pc$.
Hence, 
$$\xi C\Oseen_K \D\B=C'\Oseen_K \D'\B'.$$
Writing $\xi=\xi_1/\xi_0$ with non-zero  $\xi_0,\xi_1\in \Oseen_k$, and $A=\xi_1 C$, $A'=\xi_0 C'$ it follows that
\begin{alignat*}1
A\Oseen_K\D\B=A'\Oseen_K\D'\B'.
\end{alignat*}
Now
Lemma \ref{lem: uniquedecomposition} tells us that
that $A=A'$, $\D=\D'$ and $\B=\B'$. This implies that $C$ and $C'$ both lie in the same ideal class of $k$ and so must be equal.
Consequently $\beta'=\eta \beta$ for a unit $\eta$ in $\Oseen_K^\times\cap k=\Oseen_k^\times$.  As $\beta$ and $\beta'$ are both in $ \IS_{C,\D}$ we 
conclude that $\l(\eta)=0$ and thus $\eta=\pm 1$. Finally, since $\arg(\sigma_1(\beta))$ and $\arg(\sigma_1(\beta'))$ are both in $(1/2)\I_1\subseteq [0,\pi)$ 
the case $\beta'=-\beta$ is ruled out. This proves the injectivity. 
\end{proof}

\begin{lemma}\label{lem: disjoint}
The union
$$\bigcup_{C\in \R_k}\bigcup_{\D|\Pc}(\Zstar(C\Oseen_K\D))$$
is disjoint.
In particular, 
$$\bigcup_{C\in \R_k}\bigcup_{\D|\Pc}(\Zstar(C\Oseen_K\D))\cap \IS_{C,\D}$$
is a disjoint union.
\end{lemma}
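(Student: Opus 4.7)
The plan is to invoke Lemma \ref{lem: uniquedecomposition} directly. Suppose some $\beta$ lies in both $\Zstar(C\Oseen_K\D)$ and $\Zstar(C'\Oseen_K\D')$ for pairs $(C,\D),(C',\D')$ with $C,C'\in \R_k$ and $\D,\D'\mid \Pc$. By the very definition of $\Zstar$ there exist $\B,\B'\in I_\Pc$ with
$$\beta\Oseen_K=C\Oseen_K\D\B=C'\Oseen_K\D'\B'.$$
Applying Lemma \ref{lem: uniquedecomposition} with $A=C$ and $A'=C'$ (both non-zero ideals of $\Oseen_k$) yields $C=C'$, $\D=\D'$, and $\B=\B'$. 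Hence the indexing pair $(C,\D)$ of the set to which $\beta$ belongs is unique, which is exactly the statement that the union $\bigcup_{C,\D}\Zstar(C\Oseen_K\D)$ is disjoint.

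The second claim is immediate from the first: intersecting each term with $\IS_{C,\D}$ can only make the sets smaller, so a disjoint union remains disjoint.

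I do not anticipate any real obstacle here, since the structural uniqueness statement we need has been packaged in Lemma \ref{lem: uniquedecomposition}; the only thing to notice is that the hypotheses of that lemma are precisely what the definitions of $\R_k$, of the divisors of $\Pc$, and of $I_\Pc$ guarantee, so no further bookkeeping is required.
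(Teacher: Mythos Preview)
Your proof is correct and essentially identical to the paper's: both suppose $\beta$ lies in two of the sets, use the definition of $\Zstar$ to obtain $C\Oseen_K\D\B=C'\Oseen_K\D'\B'$, and then invoke Lemma \ref{lem: uniquedecomposition} to conclude $(C,\D)=(C',\D')$. The paper leaves the ``in particular'' implicit, while you spell it out, but there is no difference in substance.
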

\begin{proof}
Let $C_1,C_2\in \R_k$, let $\D_1,\D_2$ both be divisors of $\Pc$ and suppose 
$$\beta\in \Zstar(C_1\Oseen_K\D_1)\cap \Zstar(C_2\Oseen_K\D_2).$$ 
Hence, 
there exist $\B_1,\B_2\in I_\Pc$ such that
$C_1\Oseen_K\D_1\B_1=C_2\Oseen_K\D_2\B_2$. 
Now Lemma \ref{lem: uniquedecomposition} implies that 
$C_1=C_2$ and $\D_1=\D_2$ and this proves the lemma.
\end{proof}

\begin{lemma}\label{lem: SKcount}
We have 
\begin{alignat*}1
\#\Sc_K(\I,\H)=\sum_{C\in \R_k}\sum_{\D|\Pc}\#\Zstar(C\Oseen_K\D)\cap \IS_{C,\D}.
\end{alignat*}
\end{lemma}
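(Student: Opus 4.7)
The plan is to deduce this identity as an immediate bookkeeping consequence of the three preceding lemmas. Specifically, the map
\begin{alignat*}1
\psi:\bigcup_{C\in \R_k}\bigcup_{\D|\Pc}(\Zstar(C\Oseen_K\D))\cap \IS_{C,\D}\to \Sc_K(\I,\H)
\end{alignat*}
constructed in (\ref{map: psirestriction}) is a bijection: surjectivity is Lemma \ref{lem: surjective} and injectivity is Lemma \ref{lem: injective}. Hence
\begin{alignat*}1
\#\Sc_K(\I,\H)=\#\left(\bigcup_{C\in \R_k}\bigcup_{\D|\Pc}(\Zstar(C\Oseen_K\D))\cap \IS_{C,\D}\right).
\end{alignat*}

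Next, by Lemma \ref{lem: disjoint} the union on the right-hand side is a disjoint union indexed by the pairs $(C,\D)$ with $C\in \R_k$ and $\D\mid \Pc$. Therefore the cardinality of the union equals the sum of the cardinalities of the individual sets $(\Zstar(C\Oseen_K\D))\cap \IS_{C,\D}$, which yields the claimed formula.

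There is no real obstacle here; the entire content of Lemma \ref{lem: SKcount} has already been packaged into the surjectivity, injectivity, and disjointness statements proved in the preceding lemmas. The proof is therefore a one-line combination, with the substance residing in the earlier constructions of $\R_k$, $\Pc$, $I_\Pc$, $\Zstar(\cdot)$, the counting domain $\IS_{C,\D}$, and the map $\psi$.
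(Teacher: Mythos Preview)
Your proof is correct and matches the paper's own argument: the paper simply states that the lemma follows immediately from Lemmas \ref{lem: surjective}, \ref{lem: injective}, and \ref{lem: disjoint}. You have merely spelled out the obvious bookkeeping, namely that a bijection between the domain and $\Sc_K(\I,\H)$ together with disjointness of the union turns the cardinality into the desired sum.
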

\begin{proof}
Follows immediately from Lemma \ref{lem: surjective}, \ref{lem: injective}, and \ref{lem: disjoint}.
\end{proof}

\section{Sieving}\label{sec: sieving}
Lemma \ref{lem: SKcount} shows that  we simply need to compute $\#\Zstar(C\Oseen_K\D)\cap \IS_{C,\D}$.  
In this section we apply simple sieving arguments to reduce this task to an ordinary lattice point counting problem.

For the entire section we fix a non-zero ideal $\A$ in $\Oseen_K$ (playing the role of $C\Oseen_K\D$),
and an arbitrary finite subset  $\Sa$ of $\Oseen_K$  (playing the role of $\IS_{C,\D}$).
For a  non-zero ideal  $A\subseteq \Oseen_k$ we write
\begin{alignat*}1
\Z_A=\left(\A A\Oseen_K\backslash \bigcup_{\p|\Pc}\A\p A\Oseen_K\right)\cap \Sa.
\end{alignat*}
We note that 
\begin{alignat*}1
&\Z_A=\\
&\{\beta\in \A A\Oseen_K\backslash \{0\}; \beta\Oseen_K=\A A\Oseen_K\B \text{ for some $\B\subseteq \Oseen_K$ with }(\Pc,\B)=1\}
\cap \Sa.
\end{alignat*}

\begin{lemma}\label{lem: Zstarbasic}
We have 
\begin{alignat*}1
\Zstar(\A)\cap \Sa=\Z_{\Oseen_k}\backslash \bigcup_{A\subsetneq \Oseen_k}(\Z_{\Oseen_k}\cap \A A\Oseen_K)
\end{alignat*}
\end{lemma}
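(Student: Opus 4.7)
The plan is to unravel both sides of the claimed equality using the ideal–theoretic characterisations and show that $\beta\in\Sa$ lies in $\Zstar(\A)$ precisely when it lies in $\Z_{\Oseen_k}$ but in none of the sets $\A A\Oseen_K$ for $A\subsetneq \Oseen_k$.

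First I would record the alternative description of $\Z_A$ stated just above the lemma: since $\Oseen_K$ is a Dedekind domain, for $\beta\in\Oseen_K\setminus\{0\}$ one has $\beta\in\A A\Oseen_K$ iff $\A A\Oseen_K\mid \beta\Oseen_K$, and the condition $\beta\notin\bigcup_{\p\mid\Pc}\A\p A\Oseen_K$ forces the quotient ideal $\beta\Oseen_K(\A A\Oseen_K)^{-1}$ to be coprime to $\Pc$. Specialising to $A=\Oseen_k$ gives
\[
\beta\in\Z_{\Oseen_k}\iff \beta\in\Sa\setminus\{0\},\ \beta\Oseen_K=\A\B,\ (\B,\Pc)=1.
\]

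Second, for such a $\beta$ and any proper $A\subsetneq\Oseen_k$, Dedekind cancellation gives
\[
\beta\in \A A\Oseen_K\iff \A A\Oseen_K\mid \A\B\iff A\Oseen_K\mid\B.
\]
Therefore removing $\bigcup_{A\subsetneq\Oseen_k}(\Z_{\Oseen_k}\cap\A A\Oseen_K)$ from $\Z_{\Oseen_k}$ leaves precisely those $\beta\in\Sa$ with $\beta\Oseen_K=\A\B$, $(\B,\Pc)=1$, and $A\Oseen_K\nmid\B$ for every proper $A\subsetneq\Oseen_k$. Recalling the definition of $I_\Pc$, these three conditions say exactly that $\B\in I_\Pc$, i.e.\ $\beta\in\Zstar(\A)\cap\Sa$. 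Conversely, any $\beta\in\Zstar(\A)\cap\Sa$ clearly satisfies the first two conditions (so $\beta\in\Z_{\Oseen_k}$) and, by $\B\in I_\Pc$, fails the third exclusion for every proper $A$, so $\beta$ lies in the right hand side.

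The proof has no real obstacle; the only point needing a little care is the legitimacy of cancelling $\A$ in the divisibility $\A A\Oseen_K\mid \A\B$, which is immediate from unique factorisation of non-zero fractional ideals in the Dedekind domain $\Oseen_K$. Both inclusions are therefore established, proving the lemma.
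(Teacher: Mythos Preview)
Your proof is correct and follows essentially the same approach as the paper's: both arguments verify the two inclusions by unraveling the definitions of $\Zstar(\A)$, $I_\Pc$, and $\Z_{\Oseen_k}$, and use that $\beta\in\A A\Oseen_K$ is equivalent to $A\Oseen_K\mid\B$ when $\beta\Oseen_K=\A\B$. You are simply a bit more explicit than the paper about the Dedekind cancellation step.
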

\begin{proof}
If $\beta\in \Zstar(\A)$ then clearly $\beta\in \Z_{\Oseen_k}$ and $\beta \notin \A A\Oseen_K$ whenever $A\subsetneq \Oseen_k$.
Hence, $\beta\in \Z_{\Oseen_k}\backslash \bigcup_{A\subsetneq \Oseen_k}(\Z_{\Oseen_k}\cap \A A\Oseen_K)$. 

Now suppose $\beta \in \Z_{\Oseen_k}\backslash \bigcup_{A\subsetneq \Oseen_k}(\Z_{\Oseen_k}\cap \A A\Oseen_K)$.
As  $\beta \in \Z_{\Oseen_k}$ we have  $\beta\Oseen_K=\A \Oseen_K\B$  for some $\B\subseteq \Oseen_K$ with $(\Pc,\B)=1$.
And since $\beta \notin \bigcup_{A\subsetneq \Oseen_k} \A A\Oseen_K$ we see that 
$A\Oseen_K\nmid \B \text{ for all }A\subsetneq \Oseen_k$. Hence, $\B\in I_\Pc$, and thus $\beta \in \Zstar(\A)$.
\end{proof}

\begin{lemma}\label{lem: ZQunion}
We have 
\begin{alignat*}1
\bigcup_{A\subsetneq \Oseen_k}(\Z_{\Oseen_k}\cap \A A\Oseen_K)=\bigcup_{Q \atop (Q,P_1\cdots P_s)=1}\Z_Q,
\end{alignat*}
where the union is taken over all prime ideals $Q\subseteq \Oseen_k$ different from the prime ideals $P_1,\ldots,P_s$. 
\end{lemma}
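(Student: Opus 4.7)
The plan is to prove both inclusions by unpacking the definitions and using unique factorisation of ideals in the Dedekind ring $\Oseen_K$. First I would handle the easy direction $\supseteq$: take $\beta\in \Z_Q$ for a prime ideal $Q$ of $\Oseen_k$ coprime to $P_1\cdots P_s$. By the equivalent description of $\Z_Q$ noted just before the statement, there exists $\B\subseteq \Oseen_K$ with $(\B,\Pc)=1$ and $\beta\Oseen_K=\A Q\Oseen_K\,\B$. Because $Q$ is unramified in $K$ and distinct from every $P_i$, the extension $Q\Oseen_K$ factors only into primes of $\Oseen_K$ that do not lie among $\p_1,\ldots,\p_s$, so $Q\Oseen_K\,\B$ is again coprime to $\Pc$. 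Hence $\beta\in \Z_{\Oseen_k}\cap \A Q\Oseen_K$, and since $Q\subsetneq \Oseen_k$ is a proper ideal, this set appears as a term in the left-hand union.

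For the converse $\subseteq$ I would start from $\beta\in \Z_{\Oseen_k}\cap \A A\Oseen_K$ with $A\subsetneq \Oseen_k$ proper. Membership in $\Z_{\Oseen_k}$ gives $\beta\Oseen_K=\A\Oseen_K\,\B$ with $(\B,\Pc)=1$, and the inclusion $\beta\in \A A\Oseen_K$ forces $A\Oseen_K\mid \B$ by cancellation of ideals in the Dedekind ring $\Oseen_K$. I would then pick any prime $Q$ of $\Oseen_k$ dividing $A$ and establish two things: first that $Q$ is coprime to each $P_i$, and second that $\beta\in \Z_Q$. For the first point, if $Q=P_i$ for some $i$, then $\p_i^2=P_i\Oseen_K=Q\Oseen_K\mid A\Oseen_K\mid \B$, contradicting $(\B,\Pc)=1$. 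For the second, $\beta\in \A Q\Oseen_K\cap \Sa$ is immediate from $Q\Oseen_K\mid \B$ and $\beta\in \Z_{\Oseen_k}\subseteq \Sa$, and for each $\p\mid\Pc$ a comparison of $\p$-adic valuations suffices: $(\B,\Pc)=1$ yields $\ord_\p(\beta\Oseen_K)=\ord_\p(\A\Oseen_K)$, whereas $\ord_\p(\A\p Q\Oseen_K)=\ord_\p(\A\Oseen_K)+1$ because $Q\Oseen_K$ is coprime to $\p$; hence $\A\p Q\Oseen_K\nmid\beta\Oseen_K$ and so $\beta\notin \A\p Q\Oseen_K$.

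The only real subtlety---and precisely the reason the right-hand union is restricted to primes coprime to $P_1\cdots P_s$ rather than merely to the relative discriminant $\dis$---is the ramification bookkeeping: a single factor of $P_i$ in $A$ contributes the square $\p_i^2$ after extension to $\Oseen_K$, which immediately clashes with $(\B,\Pc)=1$. Once this is noted the argument is a routine exercise in Dedekind factorisation, and I do not anticipate any genuine obstacle beyond keeping the indices in the two rings $\Oseen_k$ and $\Oseen_K$ clearly separated.
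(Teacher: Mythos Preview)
Your proof is correct. The argument is essentially the same as the paper's, just organised element-wise rather than set-theoretically: the paper first reduces the left-hand union from proper ideals $A$ to prime ideals $Q$, observes that $\Z_{\Oseen_k}\cap \A Q\Oseen_K=\emptyset$ whenever $Q\in\{P_1,\ldots,P_s\}$, and then proves the \emph{termwise} identity $\Z_{\Oseen_k}\cap \A Q\Oseen_K=\Z_Q$ for the remaining $Q$ via the set algebra $\A\p\cap \A Q\Oseen_K=\A\p Q\Oseen_K$ (which holds because $(Q\Oseen_K,\Pc)=1$). Your valuation computation $\ord_\p(\beta\Oseen_K)=\ord_\p(\A)<\ord_\p(\A\p Q\Oseen_K)$ is the element-level translation of exactly this identity, and your exclusion of $Q=P_i$ via $\p_i^2=P_i\Oseen_K\mid \B$ is the element-level version of the paper's emptiness observation. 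The paper's route has the minor advantage of yielding the termwise equality $\Z_{\Oseen_k}\cap \A Q\Oseen_K=\Z_Q$ as a byproduct; your route avoids that intermediate statement but gets to the union identity just as directly.
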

\begin{proof}
We have
\begin{alignat*}1
\bigcup_{A\subsetneq \Oseen_k}(\Z_{\Oseen_k}\cap \A A\Oseen_K)&=\bigcup_{Q}(\Z_{\Oseen_k}\cap \A Q\Oseen_K),
\end{alignat*}
and if $(Q,P_1\cdots P_s)\neq 1$ then 
\begin{alignat*}1
\Z_{\Oseen_k}\cap \A Q\Oseen_K=\emptyset.
\end{alignat*}
Therefore
\begin{alignat*}1
\bigcup_{A\subsetneq \Oseen_k}(\Z_{\Oseen_k}\cap \A A\Oseen_K)=
\bigcup_{Q \atop (Q,P_1\cdots P_s)=1}(\Z_{\Oseen_k}\cap \A Q\Oseen_K),
\end{alignat*}
and further,
\begin{alignat*}1
Z_{\Oseen_k}\cap \A Q\Oseen_K&=\Sa \cap\left(\A\backslash \bigcup_{\p|\Pc}\A\p\right)\cap\A Q\Oseen_K\\
&=\Sa \cap \A Q\Oseen_K\backslash \bigcup_{\p|\Pc}(\A\p\cap \A Q\Oseen_K).
\end{alignat*}
Finally, for prime $Q$ with $(Q,P_1\cdots P_s)=1$ we have $(Q\Oseen_K,\Pc)=1$ and thus 
$\A\p\Oseen_K\cap \A Q\Oseen_K=\A\p Q\Oseen_K$ whenever $\p|\Pc$. Hence,
\begin{alignat*}1
Z_{\Oseen_k}\cap \A Q\Oseen_K=\Sa \cap\A Q\Oseen_K\backslash \bigcup_{\p|\Pc}(\A\p Q\Oseen_K)=\Z_Q,
\end{alignat*}
and this completes the proof.
\end{proof}

\begin{lemma}\label{lem: ZQintersection}
If $Q_1,\ldots, Q_m$ are distinct prime ideals of $\Oseen_k$ all coprime to $P_1\cdots P_s$ then
\begin{alignat*}1
\bigcap_{i=1}^m \Z_{Q_i}=\Z_{Q_1\cdots Q_m}.
\end{alignat*}
\end{lemma}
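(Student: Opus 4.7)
The plan is to unpack the definition of $\Z_A$ into explicit ideal-factorization form and then verify both inclusions by routine ideal bookkeeping, driven by one key observation. Just after the definition of $\Z_A$, the paper notes the equivalent description: $\beta\in\Z_A$ iff $\beta\in\Sa\setminus\{0\}$ and $\beta\Oseen_K=\A A\Oseen_K\B$ for some integral ideal $\B$ with $(\B,\Pc)=1$. The key observation I would use is that since $Q_1,\ldots,Q_m$ are distinct prime ideals of the base ring $\Oseen_k$, we have $Q_i+Q_j=\Oseen_k$ for $i\neq j$, and extending scalars yields $Q_i\Oseen_K+Q_j\Oseen_K=\Oseen_K$. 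Hence $Q_1\Oseen_K,\ldots,Q_m\Oseen_K$ are pairwise coprime in $\Oseen_K$. Moreover, each $Q_i$ is coprime to $P_1\cdots P_s$ by hypothesis, so each $Q_i\Oseen_K$ is coprime to $\Pc$.

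For the inclusion $\Z_{Q_1\cdots Q_m}\subseteq\bigcap_i\Z_{Q_i}$, I would take $\beta$ with $\beta\Oseen_K=\A Q_1\cdots Q_m\Oseen_K\B$ and $(\B,\Pc)=1$, fix $i$, and rewrite this as $\beta\Oseen_K=\A Q_i\Oseen_K\bigl(\prod_{j\neq i}Q_j\Oseen_K\bigr)\B$. The bracketed ideal is coprime to $\Pc$ because each $Q_j\Oseen_K$ is, so this exhibits $\beta$ as a member of $\Z_{Q_i}$.

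For the reverse inclusion, suppose $\beta\in\bigcap_i\Z_{Q_i}$. Then $\A Q_i\Oseen_K$ divides $\beta\Oseen_K$ for every $i$. By the pairwise coprimality of the $Q_i\Oseen_K$, their least common multiple equals their product, so $\A Q_1\cdots Q_m\Oseen_K\mid\beta\Oseen_K$; write $\beta\Oseen_K=\A Q_1\cdots Q_m\Oseen_K\D$. It remains to check $(\D,\Pc)=1$. From $\beta\in\Z_{Q_1}$ we also have $\beta\Oseen_K=\A Q_1\Oseen_K\B_1$ with $(\B_1,\Pc)=1$. Comparing the two factorizations and cancelling $\A Q_1\Oseen_K$ (using unique factorization of ideals in $\Oseen_K$) gives $\B_1=Q_2\cdots Q_m\Oseen_K\cdot\D$, whence $\D\mid\B_1$ and therefore $(\D,\Pc)\mid(\B_1,\Pc)=1$.

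The argument is essentially routine and poses no real obstacle; the only point requiring care is to invoke coprimality at the level of $\Oseen_k$ before extending to $\Oseen_K$. The step ``pairwise divisibility implies divisibility by the product'' would genuinely fail for distinct primes $\q_i$ of $\Oseen_K$ lying above a common ramified prime downstairs, so the hypothesis that the $Q_i$ are primes of $\Oseen_k$ (and not just of $\Oseen_K$) is used in an essential way.
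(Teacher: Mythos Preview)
Your argument is correct and proceeds along a different line from the paper. The paper works directly with the original definition $\Z_A=\bigl(\A A\Oseen_K\setminus\bigcup_{\p\mid\Pc}\A\p A\Oseen_K\bigr)\cap\Sa$ and invokes the elementary set identity $\bigcap_i(A_i\setminus\bigcup_j B_{ij})=A\setminus\bigcup_{i,j}(B_{ij}\cap A)$ with $A=\bigcap_i A_i$, applied to $A_i=\A Q_i\Oseen_K$ and $B_{ij}=\A\p_jQ_i\Oseen_K$; the ideal-theoretic input is then just the identifications $\bigcap_i\A Q_i\Oseen_K=\A Q_1\cdots Q_m\Oseen_K$ and $B_{ij}\cap A=\A\p_jQ_1\cdots Q_m\Oseen_K$, both consequences of coprimality. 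You instead use the equivalent description in terms of factorizations $\beta\Oseen_K=\A A\Oseen_K\B$ with $(\B,\Pc)=1$ and verify the two inclusions by explicit divisibility bookkeeping. Both routes ultimately rest on the same coprimality facts; the paper's is a bit more compact once the set identity is in hand, while yours makes it clearer exactly where each hypothesis is consumed.

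One small aside: your closing parenthetical remark is not quite right---distinct prime ideals of $\Oseen_K$ are automatically coprime, ramification notwithstanding. The real role of the hypothesis is that \emph{distinct primes $Q_i$ of $\Oseen_k$} extend to pairwise coprime ideals $Q_i\Oseen_K$, which is exactly what you use (correctly) in the body of the proof.
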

\begin{proof}
For arbitrary  sets $A_i$ and subsets $B_{ij}$, and $A=\cap_i A_i$ one has 
$$\cap_i (A_i\backslash \cup_j B_{ij})=A\backslash(\cup_{i,j} B_{ij}\cap A).$$
Applying this with $A_i=\A Q_i\Oseen_K$, and $B_{ij}=\A \p_j Q_i\Oseen_K$, so that
$Z_{Q_i}=\Sa\cap A_i\backslash \cup_j B_{ij}$, and noting that $B_{ij}\cap A=\A \p_j Q_1\cdots Q_m$, as $(\p_j,Q_1\cdots Q_m\Oseen_K)=1$,
we get
\begin{alignat*}1
\bigcap_{i=1}^m \Z_{Q_i}=\Sa\cap \bigcap_{i=1}^m A_i\backslash \cup_j B_{ij}=\Sa\cap \A Q_1\cdots Q_m\backslash(\cup_{i,j}\A \p_j Q_1\cdots Q_m).
\end{alignat*}
Finally, we note that $\cup_{i,j}\A \p_j Q_1\cdots Q_m=\cup_{\p|\Pc}\A \p Q_1\cdots Q_m$, and thus the claim drops out.
\end{proof}

Let $\mu_k(\cdot)$ and $\mu_K(\cdot)$ be the M\"obius functions on non-zero  ideals of $\Oseen_k$ and  $\Oseen_K$ respectively.
\begin{lemma}\label{lem: countZQunion}
We have 
\begin{alignat*}1
\#\bigcup_{Q \atop (Q,P_1\cdots P_s)=1}\Z_{Q}=\sum_{A\subsetneq \Oseen_k \atop (A,P_1\cdots P_s)=1}-\mu_k(A)\#\Z_A,
\end{alignat*}
where the left union is taken over all prime ideals $Q\subseteq \Oseen_k$ different from the prime ideals $P_1,\ldots,P_s$. 
\end{lemma}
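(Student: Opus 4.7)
The plan is to apply finite inclusion-exclusion to the union on the left and then re-encode the resulting sum over squarefree ideals in terms of the M\"obius function on $\Oseen_k$.

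\textbf{Step 1: Finiteness.} First I would note that because $\Sa$ is finite and $\Z_Q\subseteq \Sa\cap \A Q\Oseen_K$, any element $\beta\in \Sa$ has only finitely many ideal divisors, so $\Z_Q=\emptyset$ for all but finitely many prime ideals $Q$. Thus the union on the left-hand side is in fact a finite union, and ordinary finite inclusion-exclusion applies:
\begin{alignat*}1
\#\bigcup_{Q \atop (Q,P_1\cdots P_s)=1}\Z_{Q}
=\sum_{m\geq 1}(-1)^{m+1}\sum_{\{Q_1,\ldots,Q_m\}}\#\bigcap_{i=1}^m \Z_{Q_i},
\end{alignat*}
where the inner sum runs over unordered $m$-element sets of distinct prime ideals of $\Oseen_k$ coprime to $P_1\cdots P_s$.

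\textbf{Step 2: Apply Lemma \ref{lem: ZQintersection}.} For each such set I would replace the intersection by $\Z_{Q_1\cdots Q_m}$ using Lemma \ref{lem: ZQintersection}. Setting $A=Q_1\cdots Q_m$, the ideal $A$ ranges precisely over the squarefree proper ideals of $\Oseen_k$ that are coprime to $P_1\cdots P_s$, and $(-1)^{m+1}=-(-1)^{m}=-\mu_k(A)$. Hence
\begin{alignat*}1
\#\bigcup_{Q \atop (Q,P_1\cdots P_s)=1}\Z_{Q}
=\sum_{\substack{A\subsetneq \Oseen_k \text{ squarefree}\\(A,P_1\cdots P_s)=1}}-\mu_k(A)\#\Z_A.
\end{alignat*}

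\textbf{Step 3: Extend the summation range.} Since $\mu_k(A)=0$ whenever $A$ fails to be squarefree, the non-squarefree terms contribute nothing, so the sum over squarefree proper ideals coprime to $P_1\cdots P_s$ equals the sum over all proper ideals coprime to $P_1\cdots P_s$. This yields the desired identity.

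The argument is essentially bookkeeping once finiteness is established; the only point requiring care is to justify that inclusion-exclusion applies, and this follows immediately from the finiteness of $\Sa$. No new ingredient beyond Lemma \ref{lem: ZQintersection} is needed.
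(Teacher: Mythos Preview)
Your proposal is correct and follows essentially the same route as the paper: establish finiteness from the finiteness of $\Sa$, apply inclusion--exclusion to the finite union, replace each intersection $\bigcap_i \Z_{Q_i}$ by $\Z_{Q_1\cdots Q_m}$ via Lemma~\ref{lem: ZQintersection}, and rewrite the alternating signs as $-\mu_k(A)$. The only cosmetic difference is that you explicitly extend the sum from squarefree $A$ to all proper $A$ using $\mu_k(A)=0$ for non-squarefree ideals, whereas the paper leaves this step implicit.
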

\begin{proof}
As the set $\Sa$ is finite there are only finitely many non-zero ideals $A\subsetneq \Oseen_k$   for which $\Z_{A}\neq \emptyset$.
Among those $A$ only finitely many prime ideals divisors $Q$ occur. 
Let $Q_1,\ldots,Q_m$ be those
that are coprime to $P_1\cdots P_s$ (if no such $Q$ exists then evidently both sides are $0$).
Applying the inclusion-exclusion principle, and then using Lemma \ref{lem: ZQintersection} we find
\begin{alignat*}1
\#\bigcup_{Q \atop (Q,P_1\cdots P_s)=1}\Z_{Q}=\#\bigcup_{i=1}^m\Z_{Q_i}&=\sum_{\emptyset\neq I\subseteq \{1,2,\ldots,m\}}(-1)^{\#I-1}\#\bigcap_{i\in I}\Z_{Q_i}\\
&=\sum_{\emptyset\neq I\subseteq \{1,2,\ldots,m\}}(-1)^{\#I-1}\#\Z_{\prod_{i\in I}Q_i}\\
&=\sum_{A\subsetneq \Oseen_k \atop (A,P_1\cdots P_s)=1}-\mu_k(A)\#\Z_A.
\end{alignat*}
\end{proof}

\begin{lemma}\label{lem: countZA}
Let $A$ be a non-zero ideal in $\Oseen_k$. Then we have 
\begin{alignat*}1
\#\Z_{A}=\sum_{\E|\Pc}\mu_K(\E)\#(\A\E A\Oseen_K\cap \Sa).
\end{alignat*}
\end{lemma}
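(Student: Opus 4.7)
The plan is to apply standard inclusion--exclusion to the finite union appearing in the definition of $\Z_A$, noting that the prime ideals of $\Oseen_K$ dividing $\Pc=\p_1\cdots\p_s$ are exactly $\p_1,\ldots,\p_s$ and are pairwise distinct.

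First I would rewrite
$$\#\Z_A=\#(\Sa\cap\A A\Oseen_K)-\#\bigcup_{j=1}^{s}(\Sa\cap\A\p_jA\Oseen_K),$$
which just unpacks the definition of $\Z_A$ intersected with the finite set $\Sa$, and then I would apply the inclusion--exclusion formula to the right-hand union. This is legitimate because there are only $s$ sets to combine.

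The crucial intermediate identity is
$$\bigcap_{j\in T}\A\p_jA\Oseen_K=\A\left(\prod_{j\in T}\p_j\right)A\Oseen_K$$
for every non-empty subset $T\subseteq\{1,\ldots,s\}$. To verify it, one observes that every element of the intersection lies in $\A A\Oseen_K$ and is divisible by each $\p_j$ with $j\in T$; since the $\p_j$ are distinct prime ideals of $\Oseen_K$, hence pairwise coprime, divisibility by all of them is equivalent to divisibility by their product. This is essentially the only content of the lemma, and it is immediate from unique factorisation of ideals, so I do not expect any real obstacle.

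Finally, non-empty subsets $T\subseteq\{1,\ldots,s\}$ correspond bijectively to divisors $\E\neq\Oseen_K$ of $\Pc$ via $\E=\prod_{j\in T}\p_j$. Because $\Pc$ is square-free, every such $\E$ is also square-free and $\mu_K(\E)=(-1)^{\#T}$, so the inclusion--exclusion expansion becomes $-\sum_{\Oseen_K\neq\E\mid\Pc}\mu_K(\E)\#(\Sa\cap\A\E A\Oseen_K)$. Combining this with the leading term $\#(\Sa\cap\A A\Oseen_K)=\mu_K(\Oseen_K)\#(\Sa\cap\A\Oseen_K\cdot A\Oseen_K)$ assembles the claimed formula. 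The edge case $s=0$ (hence $\Pc=\Oseen_K$) is trivially consistent: the union over $\p\mid\Pc$ is empty, and only the $\E=\Oseen_K$ term survives on both sides.
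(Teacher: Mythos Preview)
Your proof is correct and follows essentially the same approach as the paper: you split $\#\Z_A$ into the leading term $\#(\A A\Oseen_K\cap\Sa)$ minus the cardinality of the union $\bigcup_{\p\mid\Pc}(\A\p A\Oseen_K\cap\Sa)$, and then apply inclusion--exclusion over the distinct primes $\p_1,\ldots,\p_s$ to rewrite that union as the M\"obius sum over $\E\mid\Pc$ with $\E\neq\Oseen_K$. The paper's proof is terser but identical in substance; your added justification of the intersection identity via pairwise coprimality and your explicit treatment of the case $s=0$ are welcome elaborations rather than a different argument.
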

\begin{proof}
The inclusion-exclusion principle yields
$$\#\bigcup_{\p|\Pc}\A\p A\Oseen_K\cap \Sa=\sum_{\E|\Pc \atop \E\neq \Oseen_K}-\mu_K(\E)\#(\A\E A\Oseen_K\cap \Sa).$$
As $\#\Z_{A}=\#(\A A\Oseen_K\cap \Sa)-\#\bigcup_{\p|\Pc}\A\p A\Oseen_K\cap \Sa$ the result follows at once.
\end{proof}

\begin{lemma}\label{lem: Zstarcount}
We have 
\begin{alignat*}1
\Zstar(\A)\cap \Sa=\sum_{\E|\Pc}\mu_K(\E)\left(\#(\A\E\cap \Sa)
+\sum_{A\subsetneq \Oseen_k \atop (A,P_1\cdots P_s)=1}\mu_k(A)\#(\A\E A\Oseen_K\cap \Sa)\right).
\end{alignat*}
\end{lemma}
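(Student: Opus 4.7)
The plan is to assemble the identity by substituting the four preceding lemmas in sequence; no new combinatorial input is needed. First I would apply Lemma \ref{lem: Zstarbasic} to write
\begin{alignat*}1
\#(\Zstar(\A)\cap \Sa)=\#\Z_{\Oseen_k}-\#\bigcup_{A\subsetneq \Oseen_k}(\Z_{\Oseen_k}\cap \A A\Oseen_K),
\end{alignat*}
noting that the union on the right is a disjoint-from-$\Z_{\Oseen_k}$ issue only in the sense that every term is a subset of $\Z_{\Oseen_k}$, so the set difference passes to cardinalities with a minus sign.

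Next I would invoke Lemma \ref{lem: ZQunion} to replace the union over proper ideals $A\subsetneq \Oseen_k$ with the union $\bigcup_{Q,\,(Q,P_1\cdots P_s)=1}\Z_Q$ indexed by primes $Q$ coprime to $P_1\cdots P_s$, and then apply Lemma \ref{lem: countZQunion} to evaluate the cardinality of this union as $\sum_{A\subsetneq \Oseen_k,\,(A,P_1\cdots P_s)=1}-\mu_k(A)\#\Z_A$. After substitution this gives
\begin{alignat*}1
\#(\Zstar(\A)\cap \Sa)=\#\Z_{\Oseen_k}+\sum_{A\subsetneq \Oseen_k \atop (A,P_1\cdots P_s)=1}\mu_k(A)\#\Z_A.
\end{alignat*}

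Finally I would apply Lemma \ref{lem: countZA} to every $\#\Z_A$ appearing, including the case $A=\Oseen_k$ (for which $\#(\A\E\Oseen_k\Oseen_K\cap \Sa)=\#(\A\E\cap \Sa)$), and then swap the order of the two (finite) summations to pull the sum over $\E\mid \Pc$ outside. The resulting double sum is exactly the right-hand side of the lemma, with the $\#(\A\E\cap \Sa)$ contribution coming from $A=\Oseen_k$ and the $\#(\A\E A\Oseen_K\cap \Sa)$ contribution coming from the $A\subsetneq \Oseen_k$ summands.

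The argument is essentially bookkeeping; the only things to be careful about are the sign conventions (the minus sign from the set difference in Lemma \ref{lem: Zstarbasic} cancels the minus sign in Lemma \ref{lem: countZQunion}, leaving $+\mu_k(A)$ in the outer sum) and the treatment of the $A=\Oseen_k$ term as the ``boundary'' contribution pulled out of the sum. Because $\Sa$ is finite, only finitely many $A$ and $\E$ contribute non-trivially, so swapping the order of summation is automatic and no convergence question arises. I do not anticipate a real obstacle here; the substantive content has already been established in Lemmas \ref{lem: Zstarbasic}--\ref{lem: countZA}.
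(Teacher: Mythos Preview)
Your proposal is correct and follows essentially the same approach as the paper: the paper's proof is a single sentence stating that the result ``drops out'' by combining Lemmas \ref{lem: Zstarbasic}, \ref{lem: ZQunion}, \ref{lem: countZQunion}, and \ref{lem: countZA}, which is precisely the chain of substitutions you spell out. Your additional remarks on signs and on the finiteness of the sums are accurate and make the bookkeeping explicit, but add nothing beyond what the paper intends.
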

\begin{proof}
Combining Lemma \ref{lem: Zstarbasic}, Lemma \ref{lem: ZQunion}, Lemma \ref{lem: countZQunion}, and Lemma \ref{lem: countZA}
the result drops out.
\end{proof}

\begin{lemma}\label{lem: constantspositive}
The following two identities hold
\begin{alignat*}1
\sum_{\E|\Pc}\frac{\mu_K(\E)}{N_{K/\IQ}(\E)}&=\prod_{P|\dis}\left(1-\frac{1}{N_{k/\IQ}(P)}\right),\\
\sum_{A\subseteq \Oseen_k \atop (A,P_1\cdots P_s)=1}\frac{\mu_k(A)}{N_{k/\IQ}(A)^2}&=\frac{1}{\zeta_k(2)}\prod_{P|\dis}\left(1-\frac{1}{N_{k/\IQ}(P)^2}\right)^{-1}.
\end{alignat*}
\end{lemma}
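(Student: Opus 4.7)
Both identities are standard Euler-product/M\"obius inversion manipulations. The plan is to treat each one separately: the first reduces to expanding the sum over divisors of the squarefree ideal $\Pc$, while the second is a restricted inverse zeta series.

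For the first identity, observe that $\Pc=\p_1\cdots \p_s$ is squarefree in $\Oseen_K$, so the divisors $\E$ of $\Pc$ are exactly the products of subsets of $\{\p_1,\dots,\p_s\}$, and $\mu_K(\E)$ equals $(-1)^t$ on a product of $t$ of the $\p_i$. Hence the sum factors:
\begin{alignat*}1
\sum_{\E|\Pc}\frac{\mu_K(\E)}{N_{K/\IQ}(\E)}=\prod_{i=1}^{s}\left(1-\frac{1}{N_{K/\IQ}(\p_i)}\right).
\end{alignat*}
The key small computation is that $N_{K/\IQ}(\p_i)=N_{k/\IQ}(P_i)$: taking absolute norms on both sides of $\p_i^2=P_i\Oseen_K$ and using $N_{K/\IQ}(P_i\Oseen_K)=N_{k/\IQ}(P_i)^{[K:k]}=N_{k/\IQ}(P_i)^2$ forces $N_{K/\IQ}(\p_i)^2=N_{k/\IQ}(P_i)^2$, and positivity yields the claim. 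Since $P_1,\dots,P_s$ are exactly the rational primes (of $\Oseen_k$) dividing $\dis$, the product can be rewritten as $\prod_{P|\dis}(1-N_{k/\IQ}(P)^{-1})$, which is the first identity.

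For the second identity, the plan is to start from the Euler product expansion of $1/\zeta_k(2)$, namely
\begin{alignat*}1
\frac{1}{\zeta_k(2)}=\prod_{P}\left(1-\frac{1}{N_{k/\IQ}(P)^{2}}\right)=\sum_{A\subseteq \Oseen_k}\frac{\mu_k(A)}{N_{k/\IQ}(A)^{2}},
\end{alignat*}
where the product runs over all non-zero prime ideals of $\Oseen_k$ and the sum over all non-zero ideals. Restricting $A$ to those coprime to $P_1\cdots P_s$ drops the factors at the primes $P_1,\dots,P_s$ from the Euler product, giving
\begin{alignat*}1
\sum_{(A,P_1\cdots P_s)=1}\frac{\mu_k(A)}{N_{k/\IQ}(A)^{2}}=\prod_{P\nmid \dis}\left(1-\frac{1}{N_{k/\IQ}(P)^{2}}\right)=\frac{1}{\zeta_k(2)}\prod_{P|\dis}\left(1-\frac{1}{N_{k/\IQ}(P)^{2}}\right)^{-1},
\end{alignat*}
which is the second identity.

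Neither step presents a real obstacle; the only point that might need to be mentioned explicitly is the norm identification $N_{K/\IQ}(\p_i)=N_{k/\IQ}(P_i)$ for ramified primes, since it is what allows rewriting the product in the first identity in terms of ideals of $\Oseen_k$ dividing the relative discriminant $\dis$.
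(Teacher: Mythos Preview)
Your proof is correct and follows essentially the same approach as the paper: both arguments factor the first sum over divisors of the squarefree ideal $\Pc$ (the paper phrases this as an induction on $s$, you as a direct product over subsets, but these are the same), use the identification $N_{K/\IQ}(\p_i)=N_{k/\IQ}(P_i)$, and obtain the second identity by removing the Euler factors at $P_1,\dots,P_s$ from the product for $1/\zeta_k(2)$.
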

\begin{proof}
Recall that $\Pc=\p_1\cdots\p_s$.
If $s=0$  (i.e., $\Pc=\Oseen_K$) then both statements are obvious;  the first sum is $1$ and the second one is $\zeta_k(2)^{-1}$.
Note that $N_{k/\IQ}(P_i)=N_{K/\IQ}(\p_i)$. The first statement follows by induction from the following simple identity
\begin{alignat*}1
\sum_{\E|\p_1\cdots\p_{s}}\frac{\mu_K(\E)}{N_{K/\IQ}(\E)}=\sum_{\E|\p_1\cdots\p_{s-1}}\frac{\mu_K(\E)}{N_{K/\IQ}(\E)}\left(1+\frac{\mu_K(\p_s)}{N_{K/\IQ}(\p_s)}\right).
\end{alignat*}
Now let us prove the  second identity. We have
\begin{alignat*}1
\sum_{A\subseteq \Oseen_k \atop (A,P_1\cdots P_s)=1}\frac{\mu_k(A)}{N_{k/\IQ}(A)^2}&=\prod_{P\atop P\nmid P_1\cdots P_s}\left(\mu_k(1)+\mu_k(P)N_{k/\IQ}(P)^{-2}+\mu_k(P^2)N_{k/\IQ}(P)^{-4}+\cdots \right)\\
&=\prod_{P\atop P\nmid P_1\cdots P_s}\left(1-\frac{1}{N_{k/\IQ}(P)^2}\right)=\frac{1}{\zeta_k(2)}\prod_{P|\dis}\left(1-\frac{1}{N_{k/\IQ}(P)^2}\right)^{-1}.
\end{alignat*}
In the last step we have used that a prime ideal $P$ divides $P_1\cdots P_s$ if and only if it divides $\dis$.
\end{proof}

\section{Proof of Theorem \ref{thm: main}}\label{sec: proofmainthm}
Here we finalise the proof of Theorem \ref{thm: main}. 
Combining Lemma \ref{lem: SKcount} and Lemma \ref{lem: Zstarcount} we are led to the problem of counting elements of an ideal satisfying  certain archimedean conditions.  

Let $\F\subseteq \Oseen_K$ be a non-zero ideal; then $\bsigma(\F)$ is a lattice in $\IC^N\cong \IR^{2N}$ of determinant 
\begin{alignat}1\label{eq: det}
\det(\bsigma(\F))=2^{-N}N_{K/\IQ}(\F)\sqrt{|\Delta_K|},
\end{alignat}
and the shortest non-zero vector has euclidean length 
\begin{alignat}1\label{ineq: lambda}
\lambda_1(\bsigma(\F))\geq N_{K/\IQ}(\F)^{1/(2N)}
\end{alignat}
(see \cite[Lemma 5]{MasserVaaler2}).
For brevity let us write 
\begin{alignat*}3
V&=\Vol(S_F(\I^*;1)),\\
T&=\H N_{K/\IQ}(C\Oseen_K\D)^{1/(2N)}.
\end{alignat*}
Now 
\begin{alignat}1\label{eq: sigmaSCD}
\bsigma(\IS_{C,\D}))=\bsigma(C\Oseen_K\D)\cap S_F(\I^*;T),
\end{alignat}
and it follows from (\ref{eq: SFhom}) that 
\begin{alignat*}3
\Vol\left(S_F(\I^*;T)\right)=VT^{2N}=V\H^{2N}N_{K/\IQ}(C\Oseen_K\D).
\end{alignat*}

\begin{lemma}\label{lem: idealcount}
Let $C\in \R_k$, $\E|\Pc$, $\D|\Pc$,  $A \subseteq \Oseen_K$, and let $\H\geq 2$.
Then there exists a constant $c_1=c_1(K)$ such that
$$\left|\#(C\Oseen_K\D\E A\Oseen_K\cap \IS_{C,\D})-\frac{2^N V\H^{2N}}{\sqrt{|\Delta_K|}N_{K/\IQ}(\E)N_{k/\IQ}(A)^2}\right|\leq c_1\frac{\H^{2N-1}}{N_{k/\IQ}(A)^{2-1/N}}.$$
\end{lemma}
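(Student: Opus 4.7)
The plan is to apply Lemma \ref{lem: latticecounting} to the lattice $\Lambda = \bsigma(\F)$, where $\F = C\Oseen_K\D\E A\Oseen_K$, and the set $S = S_F(\I^*; T)$, where $T = \H N_{K/\IQ}(C\Oseen_K\D)^{1/(2N)}$. By (\ref{eq: sigmaSCD}), the quantity to be estimated is precisely $\#(\Lambda\cap S)$.

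For the main term I would use (\ref{eq: det}) together with the multiplicativity of the ideal norm and the fact that $K/k$ is quadratic, so $N_{K/\IQ}(A\Oseen_K) = N_{k/\IQ}(A)^2$. This yields
\[
\det(\Lambda) = 2^{-N}N_{K/\IQ}(C\Oseen_K\D)\,N_{K/\IQ}(\E)\,N_{k/\IQ}(A)^2\,\sqrt{|\Delta_K|}.
\]
From the homogeneity property (\ref{eq: SFhom}) and $V = \Vol(S_F(\I^*;1))$, the volume of $S$ equals $V T^{2N} = V\H^{2N}N_{K/\IQ}(C\Oseen_K\D)$. The $N_{K/\IQ}(C\Oseen_K\D)$-factors cancel in the ratio $\Vol(S)/\det(\Lambda)$, producing exactly the claimed main term $2^N V\H^{2N}/(\sqrt{|\Delta_K|}\,N_{K/\IQ}(\E)\,N_{k/\IQ}(A)^2)$. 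This cancellation is the reason the factor $T = \H N_{K/\IQ}(C\Oseen_K\D)^{1/(2N)}$ was built into the definition of $\IS_{C,\D}$ in the first place.

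For the error term, Lemma \ref{lem: Lip} provides constants $L_0 = L(K)$ and $M_0 = M(N)$ such that $S_F(\I^*;1)$ lies in the closed ball of radius $L_0$ about the origin (which it does not contain), and $\partial S_F(\I^*;1)$ is in $\mathrm{Lip}(2N, M_0, L_0)$. Scaling by $T$, the set $S$ lies in the ball of radius $L_0 T$ (still excluding the origin), and $\partial S$ is in $\mathrm{Lip}(2N, M_0, L_0 T)$. The bound (\ref{ineq: lambda}) gives $\lambda_1(\Lambda) \geq N_{K/\IQ}(\F)^{1/(2N)}$, so
\[
\frac{L_0 T}{\lambda_1(\Lambda)} \;\leq\; \frac{L_0\,\H}{\bigl(N_{K/\IQ}(\E)\,N_{k/\IQ}(A)^2\bigr)^{1/(2N)}}.
\]
Inserting this into Lemma \ref{lem: latticecounting} with $D = 2N$, the error is at most a $K$-dependent constant times $\H^{2N-1}/(N_{K/\IQ}(\E)\,N_{k/\IQ}(A)^2)^{(2N-1)/(2N)}$. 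Since $N_{K/\IQ}(\E)\geq 1$ can be discarded and $(N_{k/\IQ}(A)^2)^{(2N-1)/(2N)} = N_{k/\IQ}(A)^{2-1/N}$, this matches the bound stated with some $c_1 = c_1(K)$.

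There is no real obstacle beyond careful bookkeeping; the content is entirely in verifying that the scaling properties of the homogeneously expanding counting domain $S_F(\I^*;T)$ (volume scales like $T^{2N}$, Lipschitz and radius constants scale like $T$) combine with the sizes of $\det(\Lambda)$ and $\lambda_1(\Lambda)$ in such a way that all dependence on $N_{K/\IQ}(C\Oseen_K\D)$ disappears from both the main and error terms. The hypothesis $\H\geq 2$ plays no role in this step and is carried along merely for uniformity with the main theorem.
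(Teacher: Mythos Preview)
Your proposal is correct and follows essentially the same route as the paper: apply Lemma~\ref{lem: latticecounting} to the lattice $\bsigma(C\Oseen_K\D\E A\Oseen_K)$ and the set $S_F(\I^*;T)$, using (\ref{eq: det}), (\ref{ineq: lambda}), Lemma~\ref{lem: Lip}, and the homogeneity (\ref{eq: SFhom}) to compute the main and error terms. The only step you compress slightly is the identification of the count with $\#(\Lambda\cap S)$: equation (\ref{eq: sigmaSCD}) gives $\bsigma(\IS_{C,\D})=\bsigma(C\Oseen_K\D)\cap S$, and one uses $\F\subseteq C\Oseen_K\D$ to drop the extra factor $\bsigma(C\Oseen_K\D)$, exactly as in the paper.
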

\begin{proof}
Using the injectivity of the map $\bsigma(\cdot)$ and (\ref{eq: sigmaSCD}) 
we get
\begin{alignat*}3
\#(C\Oseen_K\D\E A\Oseen_K\cap \IS_{C,\D})&=\#\left(\bsigma(C\Oseen_K\D\E A\Oseen_K)\cap \bsigma(\IS_{C,\D})\right)\\
&=\#\left(\bsigma(C\Oseen_K\D\E A\Oseen_K)\cap\bsigma(C\Oseen_K\D)\cap S_F(\I^*;T)\right)\\
&=\#\left(\bsigma(C\Oseen_K\D\E A\Oseen_K)\cap S_F(\I^*;T)\right).
\end{alignat*}

Combining  Lemma \ref{lem: Lip} and (\ref{eq: SFhom}) we see that $\partial(S_F(\I^*;T))$
is in Lip$(2N,M,L T)$, and that $S_F(\I^*;T)$ does not contain the origin but is contained in the zero centred ball of radius $LT$.
We apply Lemma \ref{lem: latticecounting} with $\Lambda=\bsigma(C\Oseen_K\D\E A\Oseen_K)$, and we use (\ref{eq: det}) and (\ref{ineq: lambda}). Since $$N_{K/\IQ}(\E A\Oseen_K)=N_{K/\IQ}(\E)N_{k/\IQ}(A)^2\geq N_{k/\IQ}(A)^2$$
 the result drops out.
\end{proof}

\begin{lemma}\label{lem: ZstarSCDcount}
Let $C\in \R_k$, $\D|\Pc$,  $\E|\Pc$, and $\H\geq 2$.
Then there exists a constant $c_2=c_2(K)$ such that
\begin{alignat*}1
\left|\sum_{A\subsetneq \Oseen_k \atop (A,P_1\cdots P_s)=1}\mu_k(A)\#(C\Oseen_K\D\E A\Oseen_K\cap \IS_{C,\D})-
\frac{2^N V\H^{2N}}{\sqrt{|\Delta_K|}N_{K/\IQ}(\E)}\sum_{A\subsetneq \Oseen_k \atop (A,P_1\cdots P_s)=1}\frac{\mu_k(A)}{N_{k/\IQ}(A)^2}\right| \\
\leq  c_2 \H^{2N-1}\IL,
\end{alignat*}
where $\IL=\log \H$ if $N=1$ and $\IL=1$ if $N\geq 2$.
\end{lemma}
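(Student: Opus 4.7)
The plan is to apply Lemma \ref{lem: idealcount} term-by-term to each summand on the left-hand side, then to control both the cumulative error and the tail of the resulting truncated main series. The key observation is that the sum on the left is automatically finite. By Lemma \ref{lem: Lip} together with (\ref{eq: SFhom}), the set $S_F(\I^*;T)$ with $T = \H N_{K/\IQ}(C\Oseen_K\D)^{1/(2N)}$ is contained in the ball of radius $LT \ll_K \H$ about the origin and avoids the origin itself, while by (\ref{ineq: lambda}) the shortest non-zero vector of the lattice $\bsigma(C\Oseen_K\D\E A\Oseen_K)$ has euclidean length at least $N_{K/\IQ}(C\Oseen_K\D\E A\Oseen_K)^{1/(2N)} \geq N_{k/\IQ}(A)^{1/N}$. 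Consequently the intersection $C\Oseen_K\D\E A\Oseen_K \cap \IS_{C,\D}$ is empty whenever $N_{k/\IQ}(A) > c_3 \H^N$ for a constant $c_3 = c_3(K)$; so only proper ideals $A$ coprime to $P_1\cdots P_s$ with $2 \leq N_{k/\IQ}(A) \leq c_3 \H^N$ can contribute.

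For each such $A$ I would invoke Lemma \ref{lem: idealcount} to write
\[
\#(C\Oseen_K\D\E A\Oseen_K \cap \IS_{C,\D}) = \frac{2^N V \H^{2N}}{\sqrt{|\Delta_K|}\,N_{K/\IQ}(\E)\,N_{k/\IQ}(A)^2} + R(A), \quad |R(A)| \leq c_1 \frac{\H^{2N-1}}{N_{k/\IQ}(A)^{2-1/N}}.
\]
Multiplying by $\mu_k(A)$ (which has absolute value at most $1$) and summing over admissible $A$ in the truncated range, the main terms recover a truncation of the claimed main term. To repair this truncation I would extend the main-term sum back to all $A$ by means of the standard ideal-counting estimate $\#\{A \subseteq \Oseen_k : N_{k/\IQ}(A) \leq X\} \ll_k X$ combined with partial summation: the discarded tail satisfies $\sum_{N_{k/\IQ}(A) > c_3 \H^N} N_{k/\IQ}(A)^{-2} \ll_k \H^{-N}$, contributing only $O_K(\H^{2N} \cdot \H^{-N}) = O_K(\H^N) \leq O_K(\H^{2N-1})$ to the total error.

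It remains to bound the cumulative $R(A)$-error $c_1 \H^{2N-1} \sum_{2 \leq N_{k/\IQ}(A) \leq c_3 \H^N} N_{k/\IQ}(A)^{-(2-1/N)}$. For $N \geq 2$ the exponent satisfies $2-1/N \geq 3/2 > 1$, so the full series over all non-zero ideals $A \subseteq \Oseen_k$ converges (again by the ideal-counting estimate), yielding a bound $O_K(\H^{2N-1})$. For $N=1$ (hence $k=\IQ$) the exponent equals $1$ and the truncated harmonic-type sum $\sum_{2 \leq a \leq c_3 \H} 1/a \ll \log \H$ is only logarithmically divergent, yielding $O_K(\H \log \H)$. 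Both cases fit the desired bound $O_K(\H^{2N-1}\IL)$. The main subtlety is precisely this borderline $N=1$ case: it is the divergence of $\sum 1/a$ that forces the logarithmic factor $\IL$ here, which then propagates to Theorem \ref{thm: main}.
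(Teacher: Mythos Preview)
Your proof is correct and follows essentially the same line as the paper's: apply Lemma~\ref{lem: idealcount} term by term, sum the resulting errors (a convergent series for $N\geq 2$, a truncated harmonic sum giving $\log\H$ for $N=1$), and control the tail of the main-term series. The only cosmetic difference is that the paper truncates solely in the $N=1$ case and applies Lemma~\ref{lem: idealcount} directly over all $A$ when $N\geq 2$, whereas you truncate uniformly at $N_{k/\IQ}(A)\ll_K \H^N$ and then restore the tail; both routes give the same bounds.
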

\begin{proof}
For $N\geq 2$ this follows immediately from Lemma \ref{lem: idealcount}. If $N=1$ then we 
use that $C\Oseen_K\D\E A\Oseen_K\cap \IS_{C,\D}$ is empty whenever $N_{K/\IQ}(\E A\Oseen_K)>\H$. In particular, 
we can restrict the sum to those $A$ with $N_{k/\IQ}(A)\leq \H$. Applying Lemma \ref{lem: idealcount}
yields the error term 
$$\sum_{A\subsetneq \Oseen_k \atop {(A,P_1\cdots P_s)=1\atop N_{k/\IQ}(A)\leq \H}} c_1\frac{\H}{N_{k/\IQ}(A)}\ll_{K} \H\log \H.$$
Restricting the sum also  introduces the additional error term 
$$\left|\sum_{A\subsetneq \Oseen_k \atop {(A,P_1\cdots P_s)=1\atop N_{k/\IQ}(A)>\H}}
\frac{2^N V\H^{2}}{\sqrt{|\Delta_K|}N_{K/\IQ}(\E)}\frac{\mu_k(A)}{N_{k/\IQ}(A)^2}\right|\ll_{K}\H.$$
This completes the proof of Lemma \ref{lem: ZstarSCDcount}.
\end{proof}

Using 
Lemma \ref{lem: Zstarcount}, and then plugging in the estimates from Lemma \ref{lem: idealcount} and Lemma \ref{lem: ZstarSCDcount}
yields

\begin{alignat*}1
\#\Zstar(C\Oseen_K\D)\cap \IS_{C,\D}=&
\sum_{\E|\Pc}\frac{\mu_K(\E) 2^N V\H^{2N}}{\sqrt{|\Delta_K|}N_{K/\IQ}(\E)} \left(
1+\sum_{A\subsetneq \Oseen_k \atop (A,P_1\cdots P_s)=1}\frac{\mu_k(A)}{N_{k/\IQ}(A)^2}\right)\\
&+O_{K}\left(\H^{2N-1}\IL\right).
\end{alignat*}
For the main term we observe that 
\begin{alignat*}1
&\sum_{\E|\Pc}\frac{\mu_K(\E) 2^N V\H^{2N}}{\sqrt{|\Delta_K|}N_{K/\IQ}(\E)} \left(
1+\sum_{A\subsetneq \Oseen_k \atop (A,P_1\cdots P_s)=1}\frac{\mu_k(A)}{N_{k/\IQ}(A)^2}\right)\\
=&\frac{2^N V\H^{2N}}{\sqrt{|\Delta_K|}} 
\left(\sum_{\E|\Pc}\frac{\mu_K(\E)}{N_{K/\IQ}(\E)}\right)
\left(\sum_{A\subseteq \Oseen_k \atop (A,P_1\cdots P_s)=1}\frac{\mu_k(A)}{N_{k/\IQ}(A)^2}\right).
\end{alignat*}

Now using Lemma \ref{lem: SKcount} and summing over the $2^s$ divisors  $\D|\Pc$, and then over $C\in \R_k$ we get

\begin{alignat*}1
\Sc_K(\H; \I)=&\frac{h_k 2^{s+N} V\H^{2N}}{\sqrt{|\Delta_K|}} 
\left(\sum_{\E|\Pc}\frac{\mu_K(\E)}{N_{K/\IQ}(\E)}\right)
\left(\sum_{A\subseteq \Oseen_k \atop (A,P_1\cdots P_s)=1}\frac{\mu_k(A)}{N_{k/\IQ}(A)^2}\right)\\
&+O_{K}\left(2^sh_k\H^{2N-1}\IL\right).
\end{alignat*}

Using Lemma \ref{lem: constantspositive} and recalling that $\dis$ has exactly $s$ prime ideal
factors gives
\begin{alignat*}1
2^{s} \left(\sum_{\E|\Pc}\frac{\mu_K(\E)}{N_{K/\IQ}(\E)}\right)
\left(\sum_{A\subseteq \Oseen_k \atop (A,P_1\cdots P_s)=1}\frac{\mu_k(A)}{N_{k/\IQ}(A)^2}\right)
=\left(\prod_{P|\dis}\frac{2N_{k/\IQ}(P)}{N_{k/\IQ}(P)+1}\right)\frac{1}{\zeta_k(2)}.
\end{alignat*}
Plugging in the value for $V$ from Lemma \ref{lem: volume}, and using 
$$|\Delta_K|=|\Delta_k|^2N_{k/\IQ}(\dis)$$ 
(see \cite[p.24]{13}) shows that the leading constant of the main term is given by $\conSK|\I|$.
This completes the proof of Theorem \ref{thm: main}.

\section{Minimal heights in cosets of $k^\times$}
In this section we generalise the previous CM-field setting. We assume throughout that $K/k$
is an arbitrary quadratic extension of number fields.

Let $\tau: K\to K$ be the unique automorphism
that fixes $k$, so that $\Gal(K/k)=\langle \tau\rangle$, and $\Norm(\alpha)=\alpha\tau(\alpha)$.
As in Section \ref{sec:basics} we let
$\psi:K^\times\to K^\times$ be the group homomorphism defined by  
\begin{alignat*}1
\psi(\beta)=\frac{\beta}{\tau(\beta)},
\end{alignat*}
so that $\ker \psi=k^\times$. We write $\NKk$ for the kernel of the norm map $\Norm$.
Hence,  Hilbert's Theorem 90 implies $\IM \psi=\NKk$, and we get an induced isomorphism
\begin{alignat}1\label{map:isom}
\hat{\psi}: K^\times/k^\times \to \NKk.
\end{alignat}
We will determine elements of minimal height for those cosets of $k^\times$ in $K^\times$
that intersect $\NKk$.

\begin{lemma}\label{lemfirst5}   If $\alpha$ belongs to $\NKk$ then the inequality
\begin{equation}\label{eleventh238}
H(\alpha) \le H(\alpha \gamma)
\end{equation}
holds for each $\gamma$ in $k^{\times}$.
In particular, the minimum value of the Weil height on 
elements of the multiplicative coset $\alpha k^{\times}$ is given by
\begin{equation*}\label{eleventh280}
\min\big\{H(\alpha \gamma) : \gamma \in k^{\times}\big\} = H(\alpha).
\end{equation*}     
\end{lemma}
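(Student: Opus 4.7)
The plan is to rewrite the logarithmic Weil height in a form where the product formula produces an exact cancellation, and then to use the fact that $\alpha\in\NKk$ forces an ``antipodal'' relation between the local absolute values of $\alpha$ at the places of $K$ lying above a split place of $k$.

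First I would combine the product formula $\sum_w [K_w:\IQ_w]\log|\beta|_w=0$ with the elementary identity $\log\max\{1,x\}=\tfrac12(\log x+|\log x|)$ for $x>0$ to obtain
\begin{equation*}
\log H(\beta)=\frac{1}{2[K:\IQ]}\sum_w [K_w:\IQ_w]\,\bigl|\log|\beta|_w\bigr|,
\end{equation*}
where $w$ ranges over all places of $K$. With the shorthand $x_w:=\log|\alpha|_w$, $y_w:=\log|\gamma|_w$, the inequality (\ref{eleventh238}) becomes
\begin{equation*}
\sum_w [K_w:\IQ_w]\,|x_w+y_w|\;\ge\;\sum_w [K_w:\IQ_w]\,|x_w|.
\end{equation*}

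Next I would read off the local behaviour of $x_w$ from the norm relation $\alpha\tau(\alpha)=1$. Group the places $w$ of $K$ according to the place $v$ of $k$ below them. If $v$ is inert or ramified in $K/k$, the unique $w\mid v$ is stable under $\tau$; by Galois-invariance of the local absolute value, $|\alpha|_w=|\tau(\alpha)|_w=|\alpha|_w^{-1}$, so $x_w=0$. If instead $v$ splits as $w_1,w_2$ in $K$, then $\tau$ interchanges $w_1$ and $w_2$, both completions equal $k_v$ (so $[K_{w_i}:\IQ_{w_i}]=[k_v:\IQ_v]$), and $|\alpha|_{w_2}=|\tau(\alpha)|_{w_1}=|\alpha|_{w_1}^{-1}$, whence $x_{w_2}=-x_{w_1}$. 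In every case $y_w=\log|\gamma|_v$ because $\gamma\in k^\times$.

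Finally I would invoke the elementary identity $|a+b|+|a-b|=2\max\{|a|,|b|\}\ge 2|a|$. Applied at each split place with $a=x_{w_1}$ and $b=y_v$ it gives
\begin{equation*}
|x_{w_1}+y_v|+|x_{w_2}+y_v|\;\ge\;2|x_{w_1}|\;=\;|x_{w_1}|+|x_{w_2}|,
\end{equation*}
while at each inert or ramified place the $\alpha$-contribution is $0$ and the $\alpha\gamma$-contribution is $\ge 0$. Weighting these place-by-place inequalities by the correct local degrees and summing yields the desired global inequality. The ``in particular'' assertion then follows by setting $\gamma=1$, which achieves equality.

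The only mildly technical point is verifying that $\tau$ really swaps the two places above a split $v$; this is standard, coming from the $\Gal(K/k)$-equivariant decomposition $K\otimes_k k_v\cong K_{w_1}\times K_{w_2}$. Once this is in hand the proof reduces to the product formula combined with the $|a+b|+|a-b|$ identity, which is what makes the argument so clean.
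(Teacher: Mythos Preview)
Your proof is correct, but the paper takes a different and considerably shorter route. Rather than decomposing place by place, the paper argues globally from three standard height facts: $H(\tau(\beta))=H(\beta)$, $H(\beta^{-1})=H(\beta)$, and $H(\beta_1\beta_2)\le H(\beta_1)H(\beta_2)$. From $\alpha\tau(\alpha)=1$ and $\tau(\gamma)=\gamma$ one gets $H(\alpha\gamma)=H(\tau(\alpha\gamma))=H(\alpha^{-1}\gamma)=H(\alpha\gamma^{-1})$, and then
\[
H(\alpha)^2=H(\alpha^2)=H\bigl((\alpha\gamma)(\alpha\gamma^{-1})\bigr)\le H(\alpha\gamma)\,H(\alpha\gamma^{-1})=H(\alpha\gamma)^2.
\]
Your local approach is more explicit: it makes the role of the norm condition visible at each place (forcing $|\alpha|_w=1$ at non-split $w$ and $|\alpha|_{w_1}|\alpha|_{w_2}=1$ at split pairs) and reduces everything to the elementary inequality $|a+b|+|a-b|\ge 2|a|$. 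This is longer but shows exactly where equality can fail and would adapt immediately to other norms on $\G_K$ built from local data. The paper's argument, by contrast, is essentially a two-line symmetry trick that works for any height satisfying the three properties above, without ever looking at an individual place.
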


\begin{proof}  Assume that $\alpha$ is in $\NKk$ and $\gamma$ is in $k^{\times}$.  The automorphism $\tau$ preserves the height of points in 
$K^{\times}$.  Therefore $\alpha \gamma$ and
\begin{equation*}
\tau(\alpha \gamma) = \tau(\alpha) \tau(\gamma) = \tau(\alpha) \gamma = \alpha^{-1} \gamma
\end{equation*}
have the same height.  Similarly, $\alpha^{-1} \gamma$ and
\begin{equation*}\label{eleventh252}
\bigl(\alpha^{-1} \gamma\bigr)^{-1} = \alpha \gamma^{-1}
\end{equation*}
have the same height.  That is, the three elements
\begin{equation*}\label{eleventh259}
\alpha \gamma, \quad \alpha^{-1} \gamma,\quad\text{and}\quad \alpha \gamma^{-1},
\end{equation*}
satisfy the identity
\begin{equation*}
H(\alpha \gamma) = H\bigl(\alpha^{-1} \gamma\bigr) = H\bigl(\alpha \gamma^{-1}\bigr).
\end{equation*}
Now by well known properties of the height we get
\begin{equation*}
\begin{split}
H(\alpha)^2 &= H\bigl(\alpha^2\bigr) = H\bigl(\bigl(\alpha \gamma\bigr) \bigl(\alpha \gamma^{-1}\bigr)\bigr)\\
		&\le H(\alpha \gamma)H\bigl(\alpha \gamma^{-1}\bigr) = H(\alpha \gamma)^2.
\end{split}
\end{equation*}  
This verifies the inequality  (\ref{eleventh238}).
\end{proof}

Consider the inverse of the isomorphism (\ref{map:isom})
$${\hat{\psi}}^{-1}:\NKk \to K^\times/k^\times.$$
Lemma \ref{lemfirst5} raises the following question. Which elements of $\NKk$ 
are mapped under ${\hat{\psi}}^{-1}$ to cosets that intersect $\NKk$?
This question is answered by  Lemma \ref{lem:SKcosets} which follows easily from the 
following simple observation.

\begin{lemma}\label{lem:psionSK}
An element $\alpha \in K^\times$ lies in $\NKk$ if and only if 
$\psi(\alpha)=\alpha^2.$
\end{lemma}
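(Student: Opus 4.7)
The plan is to prove both implications directly from the defining identities $\psi(\alpha)=\alpha/\tau(\alpha)$ and $\Norm(\alpha)=\alpha\tau(\alpha)$. Everything reduces to elementary manipulations in $K^{\times}$, so there is no real obstacle to speak of; the only subtle point is to make sure both directions are fully reversible as algebraic identities in $K^{\times}$.

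For the forward implication, I would assume $\alpha\in \NKk$, which means $\alpha\tau(\alpha)=1$, i.e.\ $\tau(\alpha)=\alpha^{-1}$. Substituting into the definition of $\psi$ gives
\begin{equation*}
\psi(\alpha)=\frac{\alpha}{\tau(\alpha)}=\frac{\alpha}{\alpha^{-1}}=\alpha^{2},
\end{equation*}
which is the claimed identity.

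For the converse, assume $\psi(\alpha)=\alpha^{2}$. Then $\alpha/\tau(\alpha)=\alpha^{2}$, and since $\alpha\in K^{\times}$ we may divide by $\alpha$ to obtain $\tau(\alpha)^{-1}=\alpha$, i.e.\ $\alpha\tau(\alpha)=1$. This is exactly $\Norm(\alpha)=1$, so $\alpha\in\NKk$.

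Taken together the two implications are equivalences of identities in $K^{\times}$, so Lemma \ref{lem:psionSK} follows. The step I would be most careful about is simply ensuring that each manipulation respects the multiplicative group structure on $K^{\times}$ (no division by zero issues arise, since $\alpha$ is assumed non-zero and $\tau$ is a field automorphism, so $\tau(\alpha)\in K^{\times}$ as well).
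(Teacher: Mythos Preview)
Your proof is correct and essentially the same as the paper's: the converse direction is identical, and for the forward implication you compute $\tau(\alpha)=\alpha^{-1}$ directly from $\Norm(\alpha)=1$, whereas the paper instead invokes the identity $\psi\circ\psi=(\cdot)^2\circ\psi$ together with $\NKk=\IM\psi$. Your version is marginally more self-contained since it does not appeal to Hilbert~90, but the two arguments are otherwise interchangeable.
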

\begin{proof}
Since $\psi^2(\beta)=\psi(\beta)^2$ for any $\beta\in K^\times$ we get $\psi(\alpha)=\alpha^2$ whenever $\alpha \in \NKk$.
And if $\psi(\alpha)=\alpha^2$ then $\alpha\tau(\alpha)=1$, and thus $\alpha \in \NKk$.
This proves the lemma.
\end{proof}

\begin{lemma}\label{lem:SKcosets}
A coset of $k^\times$ in $K^\times$ intersects $\NKk$ if and only if it is the image 
of a square in $\NKk$ under the isomorphism ${\hat{\psi}}^{-1}$.
\end{lemma}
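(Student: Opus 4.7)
The plan is to translate the statement through the isomorphism $\hat{\psi}$ and then apply Lemma \ref{lem:psionSK} in both directions. A coset $\alpha k^{\times}$ is the image of a square in $\NKk$ under ${\hat{\psi}}^{-1}$ precisely when $\hat{\psi}(\alpha k^{\times}) = \psi(\alpha)$ equals $\beta^{2}$ for some $\beta \in \NKk$. So the claim becomes: $\alpha k^{\times}\cap \NKk\neq \emptyset$ if and only if $\psi(\alpha)=\beta^{2}$ for some $\beta\in \NKk$.

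For the forward direction, suppose $\alpha k^{\times}$ meets $\NKk$, say $\beta=\alpha\gamma\in \NKk$ with $\gamma\in k^{\times}$. Since $k^{\times}=\ker\psi$, we have $\psi(\alpha)=\psi(\alpha\gamma)=\psi(\beta)$, and Lemma \ref{lem:psionSK} yields $\psi(\beta)=\beta^{2}$. Hence $\psi(\alpha)=\beta^{2}$, exhibiting $\hat\psi(\alpha k^\times)$ as the square of an element of $\NKk$.

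For the converse, suppose $\psi(\alpha)=\beta^{2}$ with $\beta\in \NKk$. Invoking Lemma \ref{lem:psionSK} in the other direction gives $\beta^{2}=\psi(\beta)$, so $\psi(\alpha)=\psi(\beta)$. Because $\ker\psi=k^{\times}$, this forces $\alpha\beta^{-1}\in k^{\times}$, so $\beta\in \alpha k^{\times}$, and since $\beta\in \NKk$ the coset $\alpha k^{\times}$ indeed intersects $\NKk$.

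There is no real obstacle here: the work is done by Hilbert 90 (already packaged into the isomorphism $\hat\psi$) and by the characterisation $\psi(\alpha)=\alpha^{2}\Leftrightarrow \alpha\in\NKk$ of Lemma \ref{lem:psionSK}. The only thing to be careful about is not to confuse ``$\hat\psi(\alpha k^\times)$ is a square in the group $\NKk$'' with ``$\alpha$ is a square in $K^\times$''; the proof above handles this by producing the square root $\beta$ explicitly inside $\NKk$.
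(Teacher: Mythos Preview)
Your proof is correct and follows essentially the same approach as the paper: both directions hinge on Lemma~\ref{lem:psionSK} together with $\ker\psi=k^\times$, and the paper's argument is just a slightly more compressed version that chooses the coset representative in $\NKk$ from the outset rather than writing $\beta=\alpha\gamma$.
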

\begin{proof}

First suppose $\beta\in {\NKk}$. Then $\psi(\beta)=\beta^2$ by Lemma \ref{lem:psionSK},  and we get
$${\hat{\psi}}^{-1}(\beta^2)={\hat{\psi}}^{-1}(\psi(\beta))=\beta k^\times,$$
proving that the image of a square is a coset that intersects $\NKk$.

Next suppose that the image $\beta k^\times$ intersects $\NKk$. We can assume 
$\beta\in \NKk$, and thus  $\psi(\beta)=\beta^2$ by Lemma \ref{lem:psionSK}. 
We conclude   
$$\beta k^\times={\hat{\psi}}^{-1}(\psi(\beta))={\hat{\psi}}^{-1}(\beta^2),$$
which proves the other direction.
\end{proof}

\bibliographystyle{amsplain}
\bibliography{literature, Books}

\end{document}